\documentclass{amsart}
\usepackage{amsmath,amssymb, amsfonts,mathrsfs,enumerate,graphicx,subfig}
\usepackage{color}

\newtheorem{theorem}{Theorem}

\newtheorem{proposition}{Proposition}[section]
\newtheorem{corollary}[proposition]{Corollary}
\newtheorem{lemma}[proposition]{Lemma}

\newtheorem{remark}[proposition]{Remark}

\usepackage{soul}

\DeclareMathOperator{\dimaff}{dim_{\mathsf{aff}}}
\DeclareMathOperator{\dimlyap}{dim_{\mathsf{Lyap}}}
\DeclareMathOperator{\dimh}{dim_{\mathsf{H}}}

\DeclareMathOperator{\GL}{GL}
\DeclareMathOperator{\SO}{SO}
\DeclareMathOperator{\SL}{SL}
\DeclareMathOperator{\Ad}{Ad}

\DeclareMathOperator{\End}{End}
\DeclareMathOperator{\Mat}{Mat}
\DeclareMathOperator{\ess}{ess}

\newcommand{\Z}{\mathbb{Z}}

\newcommand{\C}{\mathbb{C}}
\newcommand{\R}{\mathbb{R}}
\newcommand{\N}{\mathbb{N}}

\usepackage{marginnote}

\DeclareMathOperator{\rank}{rank}
\DeclareMathOperator{\sspan}{span}
\newcommand{\trip}[1]{{\left\vert\kern-0.25ex\left\vert\kern-0.25ex\left\vert #1 
    \right\vert\kern-0.25ex\right\vert\kern-0.25ex\right\vert}}
\newcommand{\iii}{\mathtt{i}}
\newcommand{\jjj}{\mathtt{j}}
\newcommand{\kkk}{\mathtt{k}}
\usepackage{xcolor}

\interfootnotelinepenalty=10000


\allowdisplaybreaks

\title[Dimension gap for self-affine measures]{A converse statement to Hutchinson's theorem and a dimension gap for self-affine measures}
\author{Ian D. Morris and Cagri Sert} 
\address{I. D. Morris: School of Mathematical Sciences, Queen Mary, University of London, Mile End Road, London E1 4NS, UK}
\email{i.morris@qmul.ac.uk }
\address{C.
 Sert: Institut f\"{u}r Mathematik,
Universit\"{a}t Z\"{u}rich,
Winterthurerstrasse 190, 8057 Z\"{u}rich, Switzerland}
\email{cagri.sert@math.uzh.ch}
\begin{document}

\begin{abstract}A well-known theorem of J.E. Hutchinson states that if an iterated function system consists of similarity transformations and satisfies the open set condition then its attractor supports a self-similar measure with Hausdorff dimension equal to the similarity dimension. In this article we prove the following result which may be regarded as a form of partial converse: if an iterated function system consists of invertible affine transformations whose linear parts do not preserve a common invariant subspace, and its attractor supports a self-affine measure with Hausdorff dimension equal to the affinity dimension, then the system necessarily consists of similarity transformations. We obtain this result by showing that the equilibrium measures of an affine iterated function system are never Bernoulli measures unless the system either is reducible or consists of similarity transformations. The proof builds on earlier results in the thermodynamic formalism of affine iterated function systems due to Bochi, Feng, K\"aenm\"aki, Shmerkin and the first named author and also relies on the work of Benoist on the spectral properties of Zariski-dense subsemigroups of reductive linear groups.
\end{abstract}
\maketitle

\section{Introduction}

An \emph{iterated function system} is by definition a tuple $(T_1,\ldots,T_N)$ of contracting transformations of some metric space $X$, which in this article will be taken to be $\mathbb{R}^d$. To avoid trivialities it will be assumed throughout this article that $N \geq 2$. If $(T_1,\ldots,T_N)$ is an iterated function system acting on $\mathbb{R}^d$ then it is well-known that there exists a unique nonempty compact subset $Z\subset \mathbb{R}^d$ with the property $Z =\bigcup_{i=1}^N T_iZ$, called the \emph{attractor} or \emph{limit set} of the iterated function system. If additionally any probability vector $(p_1,\ldots,p_N)$ is specified then there exists a unique Borel probability measure $m$ on $\mathbb{R}^d$ such that $m=\sum_{i=1}^Np_i (T_i)_*m$. In the case where the transformations $T_i$ are contracting similitudes of $\mathbb{R}^d$ we call the limit set $Z$ a \emph{self-similar set} and the measure $m$ a \emph{self-similar measure}.

For each $x \in \mathbb{R}^d$ and $r>0$ let $B_r(x)$ denote the open Euclidean ball with radius $r$ and centre $x$. If $m$ is a Borel probability measure $m$ on $\mathbb{R}^d$ such that the limit
\[\lim_{r\to 0} \frac{\log m(B_r(x))}{\log r}\]
exists for $m$-a.e.~$x$ and is constant $m$-a.e, we say that $m$ is \emph{exact-dimensional} and define the dimension of $m$ to be the value of this almost-everywhere limit. We denote the dimension of such a measure by $\dim m$. It was shown in 2009 by D.-J. Feng and H. Hu that every self-similar measure on $\mathbb{R}^d$ is exact-dimensional \cite{FeHu09}. We denote the Hausdorff dimension of any subset $Z$ of $\mathbb{R}^d$ by $\dimh Z$.

An iterated function system is said to satisfy the \emph{open set condition} if there exists a nonempty open set $U$ such that $T_iU \subseteq U$ for all $i=1,\ldots,N$ and such that $T_iU \cap T_jU =\emptyset$ whenever $i \neq j$, and is said to satisfy the \emph{strong open set condition} if additionally $U \cap Z \neq \emptyset$. The starting point of the motivation for this article is the following landmark theorem of J.E. Hutchinson \cite{Hu81}:
\begin{theorem}[Hutchinson]\label{th:hutch}
Let $T_1,\ldots,T_N \colon \mathbb{R}^d \to \mathbb{R}^d$ be contracting similitudes of the form $T_ix:=r_i O_ix+v_i$ for some $r_i \in (0,1)$, $O_i \in O(d)$ and $v_i \in \mathbb{R}^d$ and suppose that $(T_1,\ldots,T_N)$ satisfies the open set condition. Then the Hausdorff dimension of the attractor $Z$ of the iterated function system $(T_1,\ldots,T_N)$ is equal to the unique real number $s \in (0,d]$ such that $\sum_{i=1}^N r_i^s=1$. Moreover there exists a unique self-similar measure $m$ supported on $Z$ with dimension $s$.
\end{theorem}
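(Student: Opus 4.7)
The plan is to prove the result in three stages: the upper bound $\dimh Z \leq s$ by a direct covering argument; the construction of a self-similar measure of dimension $s$, which will yield the lower bound via the mass distribution principle; and finally the uniqueness of such a measure by strict concavity.

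For the upper bound I would iterate the self-similarity relation to obtain $Z = \bigcup_{|\mathbf{i}| = n} T_{\mathbf{i}}(Z)$, where $T_{\mathbf{i}} := T_{i_1} \circ \cdots \circ T_{i_n}$, for each $n \geq 1$. Since each $T_i$ is a similitude with contraction ratio $r_i$, one has $\diam(T_{\mathbf{i}}(Z)) = r_{i_1}\cdots r_{i_n}\cdot\diam(Z)$, and hence $\sum_{|\mathbf{i}|=n}\diam(T_{\mathbf{i}}(Z))^s = \diam(Z)^s \cdot (\sum_{i=1}^N r_i^s)^n = \diam(Z)^s$ by the defining property of $s$. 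As $n \to \infty$ the maximum cylinder diameter tends to zero, so the $s$-dimensional Hausdorff measure of $Z$ is at most $\diam(Z)^s < \infty$, and therefore $\dimh Z \leq s$.

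For the lower bound I would set $p_i := r_i^s$ and take $m$ to be the corresponding self-similar measure, which coincides with the pushforward of the Bernoulli measure $\mu$ of weights $(p_1,\ldots,p_N)$ on $\Sigma := \{1,\ldots,N\}^{\mathbb{N}}$ under the natural coding $\pi(\mathbf{i}) := \lim_{n\to\infty}T_{i_1}\circ\cdots\circ T_{i_n}(0)$. By the mass distribution principle it suffices to establish a uniform estimate $m(B_r(x)) \leq C r^s$. Given a ball $B_r(x)$, I would associate to each $\mathbf{i}$ with $\pi(\mathbf{i}) \in B_r(x)$ the stopping time $n(\mathbf{i})$ equal to the least $n$ with $r_{i_1}\cdots r_{i_n} \leq r$; the resulting collection of stopped prefixes is an antichain in the prefix order. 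Iterating the open set condition on a witness open set $U$, the images $T_{\mathbf{i}|_{n(\mathbf{i})}}(U)$ corresponding to distinct stopped prefixes are pairwise disjoint. A packing argument in $\mathbb{R}^d$---using that each such image contains a Euclidean ball of radius comparable to $r$, and that if it meets $B_r(x)$ then it lies inside a fixed dilate of $B_r(x)$---bounds the number of stopped cylinders meeting $B_r(x)$ by a dimensional constant $C_0$. Since each contributes $m$-mass at most $\prod_{j=1}^{n(\mathbf{i})} r_{i_j}^s \leq r^s$, we obtain $m(B_r(x)) \leq C_0 r^s$. Thus $\dim m \geq s$ and so $s \leq \dim m \leq \dimh Z \leq s$, giving equality throughout.

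For uniqueness, if $m'$ is another self-similar measure of dimension $s$ with weights $(p_1',\ldots,p_N')$, I would apply the standard dimension formula $\dim m' = (-\sum_i p_i' \log p_i')/(-\sum_i p_i' \log r_i)$ (which follows from the same covering technique as in the preceding paragraph, applied to the Bernoulli measure of weights $p'$). Combined with $\sum_i r_i^s = 1$, the equality $\dim m' = s$ rearranges to $\sum_i p_i' \log(r_i^s/p_i') = 0$. Jensen's inequality applied to $\log$ bounds this sum above by $\log(\sum_i r_i^s) = 0$, with equality only when $r_i^s/p_i'$ is independent of $i$; this forces $p_i' = r_i^s$ and hence $m' = m$. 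The principal technical obstacle is the overlap estimate in the third paragraph: one must convert the purely qualitative disjointness supplied by the open set condition into a uniform, $r$-independent cardinality bound on stopped cylinders intersecting $B_r(x)$, which hinges on similitudes preserving the ratio of inradius to diameter so that the disjoint sets $T_{\mathbf{i}|_{n(\mathbf{i})}}(U)$ have commensurable inner and outer scales at scale $r$, enabling a volume-packing comparison.
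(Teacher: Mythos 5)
The paper does not prove Theorem~\ref{th:hutch}: it is stated as background and cited directly to Hutchinson's 1981 paper \cite{Hu81}, so there is no in-paper argument to compare against. Your outline is the standard Moran--Hutchinson proof and is correct. A few small points worth tightening.

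In the packing step, the relevant incidence is $T_{\iii|_{n(\iii)}}(Z)\cap B_r(x)\neq\emptyset$, not $T_{\iii|_{n(\iii)}}(U)\cap B_r(x)\neq\emptyset$, since it is cylinders of $Z$ that carry the mass; you then need the disjoint sets $T_{\iii}(U)$ to sit inside a fixed dilate of $B_r(x)$. This follows because $T_\iii$ is a similitude of ratio $r_\iii\leq r$, so $\sup_{z\in Z,\,u\in U}|T_\iii z - T_\iii u| = r_\iii\sup_{z\in Z,\,u\in U}|z-u|\leq rD$ for a constant $D$ depending only on $Z$ and a bounded choice of $U$ (one may always shrink $U$ to a bounded witness of the OSC by intersecting with a large invariant ball). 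With that in place, each $T_\iii(U)$ contains a ball of radius $\geq \rho\, r_{\min}\, r$ and lies in $B_{(1+D)r}(x)$, and the Lebesgue-volume comparison gives the $r$-independent bound $C_0$ on the number of stopped prefixes you count; the rest of the lower-bound argument is fine.

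For the uniqueness step you invoke the exact equality $\dim m' = (-\sum_i p_i'\log p_i')/(-\sum_i p_i'\log r_i)$, but the equality is not needed and is the harder direction to justify ``from the same covering technique.'' The argument only requires the Lyapunov-dimension upper bound $\dim m' \leq (-\sum_i p_i'\log p_i')/(-\sum_i p_i'\log r_i)$, which holds unconditionally (no separation hypothesis) via $m'(B_{r_{\iii}\diam Z}(\Pi(x)))\geq \mu'([x|_n])$ and the ergodic theorem. Combined with $\dim m' = s$ this yields $\sum_i p_i'\log(r_i^s/p_i')\geq 0$, Jensen gives $\leq 0$, and the equality case of Jensen forces $p_i'=r_i^s$. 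Restating it this way both simplifies the proof and removes a hidden reliance on an unproved equality.
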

The extension of Theorem \ref{th:hutch} in various directions has been an active topic of research since its original publication. One major area of research has been the problem of understanding systematically what happens when the open set condition is removed (such as in \cite{BaFrMa19,BrMoSi04,Ho15,Ho14,LiVa16,Ra17,SaShSo18,So95}) and this line of research has focused especially on the dimensions of the resulting measures as opposed to the resulting sets. A second major direction of extension of Theorem \ref{th:hutch} is that in which the transformations $T_i$ are allowed to be arbitrary affine contractions instead of similitudes: this line of research dates back to the work of Bedford, McMullen and Falconer in the 1980s \cite{Be84,Fa88,Mc84} and has been particularly active within the last few years (see for example \cite{BaHoRa19,BaKa17,BaFe13,BoMo18,DaSi17,Fe19,FeSh14,KaMo18,Ra18}). It is with this second direction of extension that this article is concerned.

When $(T_1,\ldots,T_N)$ is an iterated function system consisting of affine contractions of $\mathbb{R}^d$ the attractor of $(T_1,\ldots,T_N)$ is referred to as a \emph{self-affine set} and Borel probability measures satisfying $m=\sum_{i=1}^Np_i (T_i)_*m$  are referred to as \emph{self-affine measures}. It was shown recently by D.-J. Feng in \cite{Fe19} that every self-affine measure is exact-dimensional; previous partial results in this direction include \cite{Ba15,BaKa17,FrJoJu18}. Let us now describe the most natural generalisation of Hutchinson's dimension formula $\sum_{i=1}^N r_i^s=1$ to the affine context. We recall that the \emph{singular values} of a $d\times d$ real matrix $A$ are defined to be the square roots of the (necessarily non-negative) eigenvalues of the positive semidefinite matrix $A^\top A$. We denote the singular values of $A$ by $\sigma_1(A),\ldots,\sigma_d(A)$ where it is always understood that $\sigma_1(A) \geq \sigma_2(A) \geq \cdots \geq \sigma_d(A)$. Following the notation of \cite{Fa88},  given a $d\times d$ real matrix $A$, for each $s \geq 0$ we define the \emph{singular value function} $\varphi^s(A)$ applied to $A$ by
\[\varphi^s(A):=\left\{\begin{array}{cl}\sigma_1(A)\cdots \sigma_{\lfloor s\rfloor}(A)\sigma_{\lceil s\rceil}(A)^{s-\lfloor s\rfloor}&\text{if }0 \leq s \leq d,\\
|\det A|^{\frac{s}{d}}&\text{if }s \geq d.
\end{array}\right.\]
The singular value function satisfies the useful inequality $\varphi^s(AB) \leq \varphi^s(A)\varphi^s(B)$ for all $A,B \in \GL_d(\mathbb{R})$, as is noted in \cite{Fa88}. 
Given $(A_1,\ldots,A_N) \in \GL_d(\mathbb{R})^N$ we define the \emph{singular value pressure} of $(A_1,\ldots,A_N)$ at $s$ to be the real number
\[P(A_1,\ldots,A_N;s):=\lim_{n \to \infty} \frac{1}{n}\log \sum_{i_1,\ldots,i_n=1}^N \varphi^s\left(A_{i_n}\cdots A_{i_1}\right),\]
the existence of the limit being guaranteed by subadditivity. When $A_1,\ldots,A_N \in \GL_d(\mathbb{R})$ are contracting in the Euclidean norm (or indeed with respect to an arbitrary norm on $\mathbb{R}^d$) it is not difficult to show that the function $s \mapsto P(A_1,\ldots,A_N;s)$ is strictly decreasing and locally Lipschitz continuous and has a unique zero in $(0,+\infty)$ which we denote by $\dimaff(A_1,\ldots,A_N)$. We observe that when every $A_i$ has the form $A_i=r_iO_i$ for some $r_i \in (0,1)$ and $O_i \in O(d)$ as in Theorem \ref{th:hutch}, the pressure simplifies to $P(A_1,\ldots,A_N;s)=\log \sum_{i=1}^N r_i^s$ and thus in this case $\dimaff(A_1,\ldots,A_N)$ is simply the unique solution $s$ to Hutchinson's equation $\sum_{i=1}^N r_i^s=1$. If $(T_1,\ldots,T_N)$ is an affine iterated function system of the form $T_ix=A_ix+v_i$ then we will also find it useful to write $\dimaff(T_1,\ldots,T_N):=\dimaff(A_1,\ldots,A_N)$.

We note that the singular value potential and affinity dimension have a number of antecedents in the literature in the context of the dimension theory of attractors of dynamical systems: a version of the singular value potential was considered by Douady--Oesterl\'{e} \cite{douady-oesterle} in the study of the Hausdorff dimensions of attractors, and in the same context the relevance of the asymptotics of singular values (in the form of Lyapunov exponents) was foreseen by Kaplan--Yorke \cite{kaplan-yorke} who conjectured that in generic situations the Hausdorff dimension should be related to the growth asymptotics of singular values.

An active area of research in the theory of self-affine sets is the problem of obtaining analogues of Theorem \ref{th:hutch} for affine iterated function systems. The first general result in this direction was obtained by K. Falconer in the 1988 article \cite{Fa88}:
\begin{theorem}[Falconer]\label{th:fa}
Let $A_1,\ldots,A_N \in \GL_d(\mathbb{R})$. If $\max_{1 \leq i \leq N}\|A_i\|<\frac{1}{2}$ then for Lebesgue a.e.~$(v_1,\ldots,v_N) \in (\mathbb{R}^d)^N$ the attractor $Z$ of the iterated function system $(T_1,\ldots,T_N)$ defined by $T_ix:=A_ix+v_i$ satisfies
\[\dimh Z = \min\{d,\dimaff(A_1,\ldots,A_N)\}.\]
If $\max_{1 \leq i \leq N}\|A_i\|<1$, then for \emph{every}  $(v_1,\ldots,v_N) \in (\mathbb{R}^d)^N$ the attractor satisfies
\[\dimh Z \leq \min\{d,\dimaff(A_1,\ldots,A_N)\}.\]
\end{theorem}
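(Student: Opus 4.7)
My plan is to split the theorem into its two parts: the universal upper bound $\dimh Z \leq \min\{d, \dimaff(A_1,\ldots,A_N)\}$, valid for every translation vector whenever $\max_i \|A_i\| < 1$, and the matching lower bound, which holds for Lebesgue-a.e.\ translation vector once the stronger contraction hypothesis $\max_i\|A_i\| < \tfrac12$ is imposed.

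\textbf{Upper bound.} For each word $\mathbf{i} = (i_1,\ldots,i_n)$ write $A_{\mathbf{i}} := A_{i_n}\cdots A_{i_1}$ and $T_{\mathbf{i}} := T_{i_1}\circ\cdots\circ T_{i_n}$. Iterating the self-similarity relation gives $Z = \bigcup_{|\mathbf{i}|=n} T_{\mathbf{i}} Z$, so $Z$ is covered by the $N^n$ affine images $T_{\mathbf{i}} B_0$ of a large enough ball $B_0\supset Z$. Each such image sits inside an ellipsoid with semi-axes proportional to the singular values $\sigma_1(A_{\mathbf{i}}),\ldots,\sigma_d(A_{\mathbf{i}})$, and an elementary geometric lemma covers such an ellipsoid by at most $C\prod_{k=1}^{\lfloor s\rfloor} \sigma_k(A_{\mathbf{i}})/\sigma_{\lceil s\rceil}(A_{\mathbf{i}})$ Euclidean balls of radius $\sigma_{\lceil s\rceil}(A_{\mathbf{i}})$, contributing a total of $C\varphi^s(A_{\mathbf{i}})$ to the $s$-dimensional Hausdorff pre-measure. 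For $s > \dimaff(A_1,\ldots,A_N)$ the singular value pressure is strictly negative, so $\sum_{|\mathbf{i}|=n}\varphi^s(A_{\mathbf{i}})$ decays exponentially; since also $\sigma_1(A_{\mathbf{i}})\to 0$, this forces $\dimh Z \leq s$. Combined with the trivial $\dimh Z \leq d$ this gives the desired inequality.

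\textbf{Lower bound setup.} For the lower bound I would use the potential-theoretic (Frostman energy) method. Fix $0 < s < \min\{d, \dimaff(A_1,\ldots,A_N)\}$, select a subadditive Gibbs-type measure $\mu$ on $\Sigma := \{1,\ldots,N\}^{\mathbb{N}}$ for which $\mu([\mathbf{i}])$ is comparable to $e^{-nP(s)}\varphi^s(A_{\mathbf{i}})$, and push $\mu$ forward under the coding map $\pi_v\colon\Sigma\to\mathbb{R}^d$ (depending on the translation data $v=(v_1,\ldots,v_N)$) to obtain $\nu_v := (\pi_v)_*\mu$. Frostman's lemma reduces the inequality $\dimh Z_v \geq s$ for a.e.\ $v$ to the finiteness, for $v$ in a set of positive Lebesgue measure, of the $s$-energy $I_s(\nu_v) := \iint \|x-y\|^{-s}\,d\nu_v(x)\,d\nu_v(y)$. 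By Fubini this in turn follows from a uniform transversality estimate
\[\int_{B} \|\pi_v(\mathbf{i}) - \pi_v(\mathbf{j})\|^{-s}\, dv \;\lesssim\; \varphi^s(A_{\mathbf{i}\wedge\mathbf{j}})^{-1}\]
on a fixed ball $B\subset(\mathbb{R}^d)^N$, where $\mathbf{i}\wedge\mathbf{j}$ denotes the longest common prefix; integrating against $\mu\otimes\mu$ and invoking the definition of $\dimaff$ then gives summability for every $s<\dimaff$.

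\textbf{Main obstacle.} The crux of the argument, and the source of the constant $\tfrac12$, is the displayed inequality. Writing $\pi_v(\mathbf{i}) = \sum_{n\geq 0} A_{\mathbf{i}|_n} v_{i_{n+1}}$ and factoring out the common prefix yields $\pi_v(\mathbf{i}) - \pi_v(\mathbf{j}) = A_{\mathbf{i}\wedge\mathbf{j}}\bigl(v_{i_{k+1}} - v_{j_{k+1}} + R(v)\bigr)$, where $R(v)$ is an absolutely convergent linear combination of the $v_l$ whose total operator norm is bounded by $2\sum_{n\geq 1}(\max_i\|A_i\|)^n$. Under the hypothesis $\max_i\|A_i\|<\tfrac12$ this bound is strictly less than $1$, so $v\mapsto v_{i_{k+1}}-v_{j_{k+1}}+R(v)$ is a bi-Lipschitz perturbation of a non-degenerate linear map on $(\mathbb{R}^d)^N$. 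A change of variables then reduces the integral to the $s$-energy of a Euclidean ball of radius comparable to $\sigma_d(A_{\mathbf{i}\wedge\mathbf{j}})$, which is finite precisely when $s<d$—exactly the range permitted by $s<\min\{d,\dimaff\}$. Any relaxation of the $\tfrac12$ hypothesis destroys this bi-Lipschitz change of variables, which is precisely why refinements of Falconer's result by later authors require more delicate transversality inputs or stronger structural assumptions on the linear parts.
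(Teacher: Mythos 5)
The paper does not prove Theorem~\ref{th:fa} itself; it cites Falconer's 1988 article \cite{Fa88} for the result under the hypothesis $\max_i\|A_i\|<\tfrac13$ and credits Solomyak \cite{So98} for the improvement to the constant $\tfrac12$. So there is no ``paper's proof'' to compare against, but your sketch can still be assessed on its own merits.

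Your upper bound is the standard covering argument and is essentially correct (modulo a notational mismatch: with $A_{\mathbf{i}}=A_{i_n}\cdots A_{i_1}$ the linear part of $T_{\mathbf{i}}=T_{i_1}\circ\cdots\circ T_{i_n}$ is $A_{i_1}\cdots A_{i_n}$, not $A_{\mathbf{i}}$; this is harmless for the summability argument but worth fixing). The lower-bound framework -- energy estimates combined with a Gibbs-type measure and Fubini -- is also the right template, though one should be careful that the two-sided comparability $\mu([\mathbf{i}])\asymp e^{-nP(s)}\varphi^s(A_{\mathbf{i}})$ is not available for a general tuple $(A_1,\ldots,A_N)$; what Falconer produces via a subadditivity/weak-* compactness argument is only a one-sided bound $\mu([\mathbf{i}])\lesssim e^{-nP(s)}\varphi^s(A_{\mathbf{i}})$, which is all the energy estimate requires.

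The genuine gap is in the last paragraph, precisely where you locate ``the source of the constant $\tfrac12$''. The quantity you bound, $\bigl\|2\sum_{n\geq1}(\max_i\|A_i\|)^n\bigr\|$, equals $\frac{2a}{1-a}$ with $a:=\max_i\|A_i\|$; this is $<1$ if and only if $a<\tfrac13$, and is exactly $2$ at $a=\tfrac12$. So your deduction ``under $\max_i\|A_i\|<\tfrac12$ this bound is strictly less than $1$'' is false, and your perturbative change of variables works only under Falconer's original hypothesis $a<\tfrac13$. Reaching $\tfrac12$ is exactly Solomyak's contribution: instead of integrating over a single coordinate $v_l$, one notices that because the two words diverge at the first symbol -- say $i'_1=p\neq q=j'_1$ -- the difference $\pi_v(\mathbf{i}')-\pi_v(\mathbf{j}')$ contains the term $v_p-v_q$ with coefficient $I$, and changing variables to integrate over $v_p-v_q$ (holding $v_p+v_q$ and the other coordinates fixed) gives a linear coefficient of the form $2I+E$ where $\|E\|\leq\frac{2a}{1-a}$; invertibility now needs $\frac{2a}{1-a}<2$, i.e.\ $a<\tfrac12$. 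Without this refined transversality input your proof does not establish the theorem as stated.
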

Here $\|\cdot\|$ denotes the operator norm induced by the Euclidean norm. 
Falconer's original argument assumed $\max_{1 \leq i \leq N}\|A_i\|<\frac{1}{3}$, the improvement to $\frac{1}{2}$ being due to Solomyak \cite{So98}, who also noted that the value of $\frac{1}{2}$ cannot be further improved to any $\frac{1}{2}+\varepsilon$. We remark that the hypothesis $\max_{1 \leq i \leq N}\|A_i\|<\frac{1}{2}$ and the conclusion $\dimh Z = \min\{d,\dimaff(A_1,\ldots,A_N)\}$ contain a minor asymmetry: it is clear that if each $A_i$ is replaced with $X^{-1}A_iX$ for some fixed $X \in \GL_d(\mathbb{R})$ then the almost sure Hausdorff dimension $\dimh Z$ of the attractor does not change, but the condition $\max_{1 \leq i \leq N}\|A_i\|<\frac{1}{2}$ will in general be invalidated for certain choices of $X$. This asymmetry can be remedied by weakening the hypothesis to the condition $\max_{1 \leq i \leq N}\|A_i\|<\frac{1}{2}$ for the operator norm induced by \emph{some} norm $\|\cdot\|$ on $\mathbb{R}^d$, and similarly with the condition $\max_{1 \leq i \leq N}\|A_i\|<1$, and under this hypothesis Falconer's proof goes through with minimal changes. Some similar remarks relating to sufficient conditions for the existence of the attractor of $(T_1,\ldots,T_N)$ were presented in \cite[\S6]{AtBaViWi10}. To avoid similar asymmetries in our results we will assume in this article that our affine iterated function systems are contracting with respect to some unspecified norm on $\mathbb{R}^d$.

Theorem \ref{th:fa} demonstrates that the affinity dimension correctly describes the Hausdorff dimension of the attractor in a large range of cases, but this result inherently does not apply to explicit, specific examples of affine iterated function systems. Since the publication of \cite{Fa88} an active line of research, especially in recent years, has therefore been that of extending Theorem \ref{th:fa} to explicit affine iterated function systems for which the vectors $v_i$ are fixed and some version of the open set condition is satisfied (see for example \cite{FaKe18,HuLa95,MoSh19}). In this direction the following powerful result was obtained recently by B. B\'ar\'any, M. Hochman and A. Rapaport \cite{BaHoRa19}:
\begin{theorem}[B\'ar\'any-Hochman-Rapaport]\label{th:bahora}
Let $(T_1,\ldots,T_N)$ be an affine iterated function system acting on $\mathbb{R}^2$ and satisfying the strong open set condition, where each $T_i$ is contracting with respect to the Euclidean norm. Let us write $T_ix:=A_ix+v_i$ for every $i=1,\ldots,N$ and suppose that each $A_i$ is invertible. Suppose that the linear maps $|\det A_i|^{-1/2}A_i$ are not contained in a compact subgroup of $\GL_2(\mathbb{R})$ and do not preserve a finite union of one-dimensional subspaces of $\mathbb{R}^2$.  Then the Hausdorff dimension of the attractor of $(T_1,\ldots,T_N)$ is equal to $\dimaff(A_1,\ldots,A_N)$. 
\end{theorem}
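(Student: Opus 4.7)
Since Theorem~\ref{th:fa} already establishes the upper bound $\dimh Z\leq\min\{2,\dimaff(A_1,\ldots,A_N)\}$, the task is to prove the matching lower bound. The strategy is to exhibit a single self-affine measure $\mu$ supported on $Z$ with $\dim\mu=\dimaff(A_1,\ldots,A_N)$ (or $\dim\mu=2$ in the degenerate case $\dimaff>2$); by the exact-dimensionality of self-affine measures due to Feng \cite{Fe19}, this forces $\dimh Z\geq\dim\mu$ and concludes the argument. The natural candidate is $\mu=\pi_*\nu$, where $\pi\colon\{1,\ldots,N\}^{\mathbb N}\to Z$ is the canonical coding map and $\nu$ is the \emph{K\"aenm\"aki equilibrium state} on the full shift, namely the unique shift-invariant measure realising the singular value pressure $P(A_1,\ldots,A_N;s)$ at $s=\dimaff(A_1,\ldots,A_N)$.

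The assumptions that $\{|\det A_i|^{-1/2}A_i\}$ is not contained in a compact subgroup of $\GL_2(\mathbb R)$ and does not preserve a finite union of lines ensure, by classical Furstenberg theory, that the semigroup generated by these normalised matrices is proximal and strongly irreducible on $\mathbb R^2$. Consequently the two Lyapunov exponents $\chi_1>\chi_2$ of $\nu$ are simple and distinct, the Oseledets splitting is $\nu$-a.s.\ defined, and the pushforward of $\nu$ to $\mathbb{RP}^1$ via the top Oseledets direction is a Furstenberg stationary measure with full topological support.

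The next step is to invoke the Ledrappier–Young-type formula for planar self-affine measures established by B\'ar\'any–K\"aenm\"aki \cite{BaKa17}, which expresses $\dim\mu$ as a combination of the metric entropy $h_\nu$, the Lyapunov exponents $\chi_1,\chi_2$, and the Hausdorff dimension of a one-dimensional projection of $\mu$ associated with the Oseledets splitting. Using the defining property of $\nu$ as the equilibrium state for $\varphi^s$ at $s=\dimaff$, a direct manipulation of this formula reduces the desired equality $\dim\mu=\dimaff(A_1,\ldots,A_N)$ to the assertion that the relevant one-dimensional projection of $\mu$ attains the maximum Hausdorff dimension compatible with entropy.

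The main obstacle is the verification of this dimensional maximality. The plan is to adapt Hochman's entropy-growth inverse theorem originally developed for self-similar measures. After conditioning on the random Oseledets direction, the projected measure behaves as a non-autonomous self-similar measure whose contraction ratios are governed by the symbolic coding. Hochman's framework then reduces dimensional maximality to excluding an exponential decay of the scale-$n$ entropy increments, which amounts to ruling out approximate algebraic coincidences among finite products of the $A_i$. The strong open set condition supplies the initial separation at scale one, while the Zariski density of the generated semigroup — a consequence of the irreducibility and non-compactness hypotheses via Benoist's theory of reductive linear groups — rules out exact overlaps. The core technical difficulty, and the heart of the B\'ar\'any–Hochman–Rapaport argument, is to merge the probabilistic control of the random Oseledets direction with the combinatorial propagation of entropy across dyadic scales in this fibered one-dimensional setting.
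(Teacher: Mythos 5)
This theorem is not proved in the present paper; it is cited from B\'ar\'any--Hochman--Rapaport \cite{BaHoRa19}, so there is nothing in this document to compare your plan against line by line. Nonetheless your opening step contains a serious conceptual error which deserves attention because it runs directly into the main theorem of the very paper you are reading.

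You propose to exhibit a single self-affine measure $\mu$ on the attractor $Z$ with $\dim\mu=\dimaff(A_1,\ldots,A_N)$ and then deduce $\dimh Z\geq\dimaff(A_1,\ldots,A_N)$. But the hypotheses of Theorem~\ref{th:bahora} (strong irreducibility, hence irreducibility; the normalised linear maps $|\det A_i|^{-1/2}A_i$ are not contained in a compact subgroup; contractivity), together with $\dimaff(A_1,\ldots,A_N)\in(0,2)$ in the regime where the question is non-trivial, are exactly hypotheses (i)--(iv) of Theorem~\ref{th:main}. That theorem asserts that under these hypotheses \emph{every} self-affine measure satisfies $\dim m<\dimaff(A_1,\ldots,A_N)$. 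So the strategy of witnessing the lower bound by a single self-affine measure is provably impossible here. This is not a mere slip of terminology on your part: you identify the candidate as $\mu=\Pi_*\nu$ with $\nu$ the K\"aenm\"aki equilibrium state, but $\Pi_*\nu$ is self-affine if and only if $\nu$ is a Bernoulli measure (Lemma~\ref{le:hutch}), and Theorem~\ref{th:main-tech} says precisely that under the present hypotheses no K\"aenm\"aki equilibrium state is Bernoulli. Consequently your candidate is not a self-affine measure, Feng's exact-dimensionality theorem \cite{Fe19} for self-affine measures does not apply to it, and the ``direct manipulation'' of the Ledrappier--Young formula you envision runs into exactly the projection-dimension defect that the present paper shows is strictly positive for every self-affine measure.

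The actual B\'ar\'any--Hochman--Rapaport proof circumvents this exact obstruction, as the introduction of this paper explains around equation \eqref{eq:bigsa}: the lower bound is obtained not from one self-affine measure but from measures which are self-affine in the \emph{broader} sense of being invariant under a convex combination of $n$-fold compositions $T_{i_1}\cdots T_{i_n}$, together with a limiting procedure as $n\to\infty$; this extends the positive-subsystem construction of \cite{MoSh19} from a statement about certain measures to a statement about the set $Z$ itself. The ingredients you list --- the B\'ar\'any--K\"aenm\"aki Ledrappier--Young formula \cite{BaKa17}, proximality and strong irreducibility via Furstenberg theory, Hochman's entropy-increment inverse theorem, and initial separation from the strong open set condition --- are indeed the right toolkit, but they are deployed on this family of auxiliary measures, not on a single self-affine measure nor directly on the K\"aenm\"aki Gibbs measure.
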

In dimension $d>2$ the problem of obtaining an analogue of Theorem \ref{th:bahora} is substantially more challenging. At the time of writing, no explicit examples of affine iterated function systems in dimension higher than two are yet known where the Hausdorff and affinity dimensions coincide, other than those which fall within the scope of Theorem \ref{th:hutch}. On the other hand, in the broader setting of limit sets of actions of non-conformal transformations, Dufloux \cite{dufloux} has successfully computed the Hausdorff dimension of limit sets on the boundary $\partial H^n_\C$ of the $n$-dimensional complex hyperbolic space associated to well-positioned Schottky subgroups. We also note the work of Pozzetti--Sambarino--Wienhard \cite{PSW} who, under an asymptotic conformality assumption, have successfully calculated the Hausdorff dimensions of limit sets in projective spaces.

Returning to our setting of affine iterated function systems, while Theorems \ref{th:fa} and \ref{th:bahora} extend the part of Theorem \ref{th:hutch} which describes the dimension of the attractor, a feature which has no direct parallel in Theorem \ref{th:bahora} in particular is the question of whether or not there exists a measure supported on the attractor of the affine iterated function system $(T_1,\ldots,T_N)$ having dimension equal to the affinity dimension. While we conjecture that this should indeed be the case in the context of Theorem \ref{th:bahora} and its presumed higher-dimensional analogues (and indeed it is known that such measures exist generically in the sense of Theorem \ref{th:fa} -- see \cite{JoPoSi07}) in this article we will focus on a narrower question: under what circumstances does an affine iterated function system $(T_1,\ldots,T_N)$ acting on $\mathbb{R}^d$ admit a \emph{self-affine} measure with dimension equal to the affinity dimension? 

Theorem \ref{th:hutch} indicates that this phenomenon occurs when the affine transformations are all similitudes, or more generally when they are simultaneously conjugated to similitudes by some linear transformation of $\mathbb{R}^d$. In this situation it was observed by P. Mattila that while the open set condition is sufficient for the existence of a self-similar measure with dimension equal to the affinity dimension, it is not necessary for it (see the introduction to \cite{Sc94}). One may also show that self-affine measures with dimension equal to the affinity dimension can arise in certain circumstances when the linear parts of the affinities admit a common invariant subspace, or when the affinity dimension is precisely equal to $d$. The objective of this article is to demonstrate that these are the \emph{only} situations in which this phenomenon occurs.

Henceforth we shall say that a subset $\mathsf{A}$ of $\GL_d(\mathbb{R})$ is \emph{irreducible} if there does not exist any proper nonzero subspace of $\mathbb{R}^d$ preserved by every $A \in \mathsf{A}$, and \emph{strongly irreducible} if a finite union of such subspaces is not preserved by every element of $\mathsf{A}$. When $\mathsf{A}$ is not irreducible it will be called \emph{reducible}. Clearly $\mathsf{A}$ is (strongly) irreducible if and only if the semigroup generated by $\mathsf{A}$ is. We will at times abuse notation by saying that a tuple $(A_1,\ldots,A_N)$ is (strongly) irreducible if and only if the corresponding set is. Our main result is as follows:
\begin{theorem}\label{th:main}
Let $T_1,\ldots,T_N$ be invertible affine transformations of $\mathbb{R}^d$ having the form $T_ix:=A_ix+v_i$ for some $v_1,\ldots, v_N \in \mathbb{R}^d$, where $(A_1,\ldots,A_N) \in \GL_d(\mathbb{R})^N$ has the following four properties:
\begin{enumerate}[(i)]
\item
There exists a norm $\trip{\cdot}$ on $\mathbb{R}^d$ such that $\trip{A_i}<1$ for every $i=1,\ldots,N$;
\item
The affinity dimension $\dimaff(A_1,\ldots,A_N)$ is strictly between $0$ and $d$;
\item
The tuple $(A_1,\ldots,A_N)$ is irreducible;
\item
There does not exist an inner product on $\mathbb{R}^d$ with respect to which the linear maps $A_1,\ldots,A_N$ are similitudes.
\end{enumerate}
Then every self-affine measure $m=\sum_{i=1}^Np_i(T_i)_*m$ satisfies $\dim m < \dimaff(A_1,\ldots,A_N)$.

Furthermore this property is locally uniform in the following sense. Suppose that $\mathsf{K} \subset \GL_d(\mathbb{R})^N$ is a compact set such that every $(A_1,\ldots,A_N) \in \mathsf{K}$ satisfies hypotheses (i)--(iv) above. This applies in particular if $(B_1,\ldots,B_N) \in \GL_d(\mathbb{R})^N$ satisfies (i)--(iv) above and $\mathsf{K}$ is a sufficiently small compact neighbourhood of $(B_1,\ldots,B_N)$. Then there exists $\kappa>0$ depending on $\mathsf{K}$ with the following property: if $(A_1,\ldots,A_N) \in \mathsf{K}$, and $T_1,\ldots,T_N \colon \mathbb{R}^d \to \mathbb{R}^d$ are affine transformations of the form $T_ix=A_ix+v_i$ for some vectors $v_1,\ldots,v_N$, and $m=\sum_{i=1}^N p_i(T_i)_*m$ is a self-affine measure on $\mathbb{R}^d$ for some probability vector $(p_1,\ldots,p_N)$, then $\dim m \leq \dimaff(A_1,\ldots,A_N)-\kappa$.  
\end{theorem}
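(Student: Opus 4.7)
The plan is to translate the problem to the symbolic coding and combine three ingredients: the Lyapunov-dimension upper bound on the dimension of a push-forward of a shift-invariant measure; K\"aenm\"aki's variational principle identifying the affinity dimension with a maximum Lyapunov dimension; and Benoist's theorem on limit cones of Zariski-dense subsemigroups of reductive groups, which will rule out the equilibrium state from being Bernoulli unless the system consists of similitudes (hypothesis (iv) failing) or is reducible (hypothesis (iii) failing).

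Let $\Sigma=\{1,\ldots,N\}^{\mathbb{N}}$ carry the shift $\sigma$ and the canonical coding $\pi\colon\Sigma\to\mathbb{R}^d$, so that $m=\pi_*\mu$ where $\mu$ is the Bernoulli measure with weights $(p_i)$. For each shift-invariant probability measure $\nu$, define $\dimlyap(\nu)$ as the unique $s$ at which $h(\nu)+\lim_n n^{-1}\!\int\log\varphi^s(A_{i_n}\cdots A_{i_1})\,d\nu=0$. The bound $\dimh(\pi_*\nu)\leq\dimlyap(\nu)$ is by now standard, having been proved in increasing generality by Jordan--Pollicott--Simon, B\'ar\'any--K\"aenm\"aki and Feng. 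K\"aenm\"aki's variational principle, together with the uniqueness and Gibbs property of equilibrium states of the singular value potential under hypothesis (iii) (due to K\"aenm\"aki, Feng--K\"aenm\"aki, Bochi--Morris and the first author), identifies $\dimaff(A_1,\ldots,A_N)$ with $\dimlyap(\mu^*)$ for a unique equilibrium state $\mu^*$ at $s^*=\dimaff$ satisfying
\[
C^{-1}\varphi^{s^*}(A_{i_n}\cdots A_{i_1})\leq\mu^*([i_1,\ldots,i_n])\leq C\,\varphi^{s^*}(A_{i_n}\cdots A_{i_1}).
\]
If $\dim m=\dimaff(A_1,\ldots,A_N)$, the chain $\dim m\leq\dimlyap(\mu)\leq\dimaff=\dimlyap(\mu^*)$ together with uniqueness forces $\mu=\mu^*$. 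In particular the Bernoulli cylinder masses $p_{i_1}\cdots p_{i_n}$ satisfy the Gibbs bound, and hence
\[
\varphi^{s^*}(A_{i_n}\cdots A_{i_1})\asymp p_{i_1}\cdots p_{i_n}
\]
uniformly in cylinders.

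The main step, and the principal technical obstacle, is to derive a contradiction with the combination of (iii) and (iv) from this approximate multiplicativity of $\varphi^{s^*}$ along all words. The preceding asymptotic says that the normalized logarithmic singular values $|\mathtt{w}|^{-1}(\log\sigma_1,\ldots,\log\sigma_d)(A_{\mathtt{w}})$ lie, up to $o(1)$ as $|\mathtt{w}|\to\infty$, on a single affine hyperplane of $\mathbb{R}^d$ (the level set of $\varphi^{s^*}$). On the other hand, Benoist's work on Zariski-dense subsemigroups of reductive linear groups identifies the closure of these normalized vectors with the limit cone of the Zariski closure $\mathsf{G}$ of the semigroup generated by $A_1,\ldots,A_N$. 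Under hypothesis (iii), $\mathsf{G}$ acts irreducibly on $\mathbb{R}^d$; the confinement of its limit cone to a single hyperplane of the Weyl chamber forces, via the structure theory of reductive groups and Benoist's simultaneous proximality results, the Zariski closure $\mathsf{G}$ to lie inside the conformal group of some Euclidean inner product. Equivalently, each $A_i$ is a similitude for a common inner product, contradicting (iv).

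For the locally uniform statement I would invoke compactness. The map $A\mapsto\dimaff(A)$ is continuous on $\mathsf{K}$ (in fact locally Lipschitz under (i)), while $(A,p)\mapsto\dimlyap(\mu_p)$ is upper semicontinuous on $\mathsf{K}\times\Delta_N$ (where $\Delta_N$ is the probability simplex) by the upper semicontinuity of partial sums of Lyapunov exponents of i.i.d.\ matrix products and the continuity of the singular value pressure, both standard under (iii). Therefore $(A,p)\mapsto\dimaff(A)-\dimlyap(\mu_p)$ is lower semicontinuous on $\mathsf{K}\times\Delta_N$; it is strictly positive pointwise by the qualitative result just established (noting that any $\mu_p$ with $p$ on the boundary of $\Delta_N$ cannot coincide with the fully-supported equilibrium state of the corresponding $(A_1,\ldots,A_N)\in\mathsf{K}$, so strict inequality persists). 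Compactness of $\mathsf{K}\times\Delta_N$ then yields the uniform lower bound $\kappa>0$, and combining this with $\dim m\leq\dimlyap(\mu_p)$ gives $\dim m\leq\dimaff(A)-\kappa$ as required.
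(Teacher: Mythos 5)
Your overall strategy is recognisably the paper's --- project to $\Sigma_N$, control $\dim m$ by $\dimlyap(\mu)$, use K\"aenm\"aki's variational characterisation to deduce that equality forces a Bernoulli equilibrium state, and then invoke Benoist to contradict (iii)$\wedge$(iv). The compactness argument for the locally uniform statement is also essentially the paper's (Proposition \ref{pr:gamma}). However there is a substantive gap at the centre of the argument: you assert that hypothesis (iii) alone gives uniqueness of the equilibrium state of the singular value potential $\Phi^s$ together with the Gibbs inequality $\mu^*([\iii])\asymp\varphi^{s^*}(A_\iii)$, and you then deduce $\mu=\mu^*$ from this uniqueness. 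This is false in general once $d>2$: as the paper notes in \S\ref{se:tech-thm}, irreducibility ensures uniqueness of $\Phi^s$-equilibrium states only for $d=2$, and for $d>2$ and $1<s<d-1$ there can be several, even under strong irreducibility (\cite{MoSe19}). Consequently you cannot conclude that the Bernoulli measure equals \emph{the} equilibrium state, and you do not get a Gibbs inequality against $\varphi^{s^*}$ from the existence of a Bernoulli equilibrium state. This is precisely the obstruction that forces the paper into the decomposition $\Phi^s=\max_{\mathcal{W}}\Phi^{\mathcal{W}}$ of Bochi--Morris \cite{BoMo18}, where each $\Phi^{\mathcal{W}}$ \emph{is} quasi-multiplicative with a unique Gibbs equilibrium state, and into the machinery of Sections \ref{se:irr-case}--\ref{se:gen-case} that relates these to irreducible subrepresentations of the exterior powers. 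Without a replacement for that step your argument does not go through.

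A secondary imprecision: from approximate multiplicativity of $\varphi^{s^*}$ one does not get that the normalised Cartan projections $|\iii|^{-1}\kappa(A_\iii)$ lie asymptotically in a fixed affine hyperplane (they genuinely vary since $n^{-1}\log(p_{i_1}\cdots p_{i_n})$ depends on $\iii$). The correct deduction, after a Gelfand-type limit passing from singular values to eigenvalues, is that the linear form $\chi$ corresponding to $s^*$ satisfies $\chi\bigl(\lambda(\gamma_1\gamma_2)-\lambda(\gamma_1)-\lambda(\gamma_2)\bigr)=0$ on a finite-index subsemigroup, whence by Theorem \ref{thm.benoist.density} one obtains $\mathfrak{a}_S\subseteq\ker\chi$. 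Moreover even granting this, passing from "compact image of $[G^o,G^o]$" to "$G$ is a group of Euclidean similitudes" is not a one-line consequence of limit-cone geometry: it requires the determinant identities and the analysis of the centre action carried out in \S\ref{sss:first-id} onward and in \S\ref{se:gen-case}. Finally, your parenthetical justification that degenerate Bernoulli measures cannot maximise the Lyapunov dimension again leans on the spurious uniqueness/Gibbs property; the paper avoids this by defining $\gamma$ as a supremum over all Bernoulli measures, proving it is attained and upper semi-continuous, and handing off the dichotomy to Theorem \ref{th:main-tech}.
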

In stating this result we have taken advantage of the fact that every self-affine measure on $\mathbb{R}^d$ is exact-dimensional, but this result is not required in our proof. The proof of Theorem \ref{th:main} in fact shows that the upper packing dimension of the measure $m$,
\[{\ess \sup}_m \limsup_{r \to \infty} \frac{\log m(B_r(x))}{\log r},\]
is bounded by $\dimaff(A_1,\ldots,A_N)-\kappa$. This in turn is achieved by showing that the \emph{Lyapunov dimension} of an appropriate measure on the coding space $\Sigma_N:=\{1,\ldots,N\}^{\mathbb{N}}$ is bounded by $\dimaff(A_1,\ldots,A_N)-\kappa$. The Lyapunov dimension is relatively technical to describe and would be digressive to define in this introduction, so we defer further discussion of this point to \S\ref{se:proof-of-main} below.

The condition that the linear maps $A_i$ are not all similitudes with respect to some inner product on $\mathbb{R}^d$ is equivalent to the statement that the linear maps $|\det A_i|^{-1/d}A_i$ are not all contained in some compact subgroup of $\GL_d(\mathbb{R})$, and we will at times prefer the latter formulation in the proofs. To see that these statements are equivalent we observe that if $G \leq \GL_d(\mathbb{R})$ is a compact group containing the linear maps $|\det A_i|^{-1/d}A_i$, $\langle \cdot,\cdot\rangle$ denotes the standard inner product on $\mathbb{R}^d$, and $H$ is the normalised Haar measure on $G$, the formula
\[\langle u,v\rangle_G:= \int_G \langle Bu,Bv\rangle dH(B)\]
may easily be verified to define an inner product on $\mathbb{R}^d$ which is invariant under the action of elements of $G$. In particular the transformations $A_i$ are similitudes with respect to this inner product structure. The converse direction of implication is obvious. Theorem \ref{th:main} therefore admits the following corollary which motivates the title of this work:
\begin{corollary}\label{co:converse-hutchinson}
Let $T_1,\ldots,T_N \colon \mathbb{R}^d \to \mathbb{R}^d$ be invertible affine transformations which are contracting with respect to some norm on $\mathbb{R}^d$. Let us write $T_ix=A_ix+v_i$ for all $x \in \mathbb{R}^d$ and $i=1,\ldots,N$, and suppose that $\{A_1,\ldots,A_N\}$ is irreducible. If there exists a self-affine measure $m=\sum_{i=1}^N p_i(T_i)_*m$ such that $\dim m=\dimaff(A_1,\ldots,A_N) \in (0,d)$, then there exists an inner product on $\mathbb{R}^d$ with respect to which the transformations $T_i$ are all similitudes.
\end{corollary}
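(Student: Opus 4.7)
The plan is to deduce Corollary \ref{co:converse-hutchinson} directly from Theorem \ref{th:main} by contrapositive, so there is essentially no new work to do beyond bookkeeping.

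First I would observe that the hypotheses of the corollary already supply three of the four hypotheses of Theorem \ref{th:main}: the contraction assumption with respect to some norm is hypothesis (i); the irreducibility of $\{A_1,\ldots,A_N\}$ is hypothesis (iii); and the assumption $\dim m = \dimaff(A_1,\ldots,A_N) \in (0,d)$ supplies hypothesis (ii). So the only hypothesis of Theorem \ref{th:main} whose status is not immediately decided is (iv).

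Next I would run the contrapositive. Suppose for contradiction that no inner product on $\mathbb{R}^d$ makes $A_1,\ldots,A_N$ into similitudes. Then (iv) holds, so all four hypotheses of Theorem \ref{th:main} are satisfied by $(A_1,\ldots,A_N)$. The theorem therefore forces every self-affine measure for $(T_1,\ldots,T_N)$ to have dimension strictly less than $\dimaff(A_1,\ldots,A_N)$. This contradicts the existence of the self-affine measure $m$ with $\dim m = \dimaff(A_1,\ldots,A_N)$ supplied by the hypothesis of the corollary.

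Finally I would conclude that the negation of (iv) must hold, i.e.\ there is an inner product on $\mathbb{R}^d$ with respect to which $A_1,\ldots,A_N$ are similitudes; since each $T_i$ differs from $A_i$ only by a translation, the $T_i$ are similitudes with respect to that same inner product. The only ``content'' beyond invoking Theorem \ref{th:main} is the equivalence between ``$A_i$ are similitudes for some inner product'' and the failure of (iv), but this is spelled out in the paragraph immediately preceding the corollary via the Haar averaging construction $\langle u,v\rangle_G:=\int_G \langle Bu,Bv\rangle\,dH(B)$, so I would simply cite that discussion. There is no genuine obstacle here; the entire proof is a one-line contrapositive, and I expect it to occupy only a few lines in the final manuscript.
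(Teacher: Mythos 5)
Your proof is correct and is exactly the argument the paper has in mind: the corollary is the contrapositive of Theorem \ref{th:main}, combined with the observation (made in the paragraph preceding the corollary) that failure of hypothesis (iv) is equivalent to the existence of a common similarity inner product. Nothing is missing.
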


\begin{figure}%
    \centering
    \subfloat[The classical self-similar Sierpi\'nski gasket $X_1$.]{{\includegraphics[width=5.8cm]{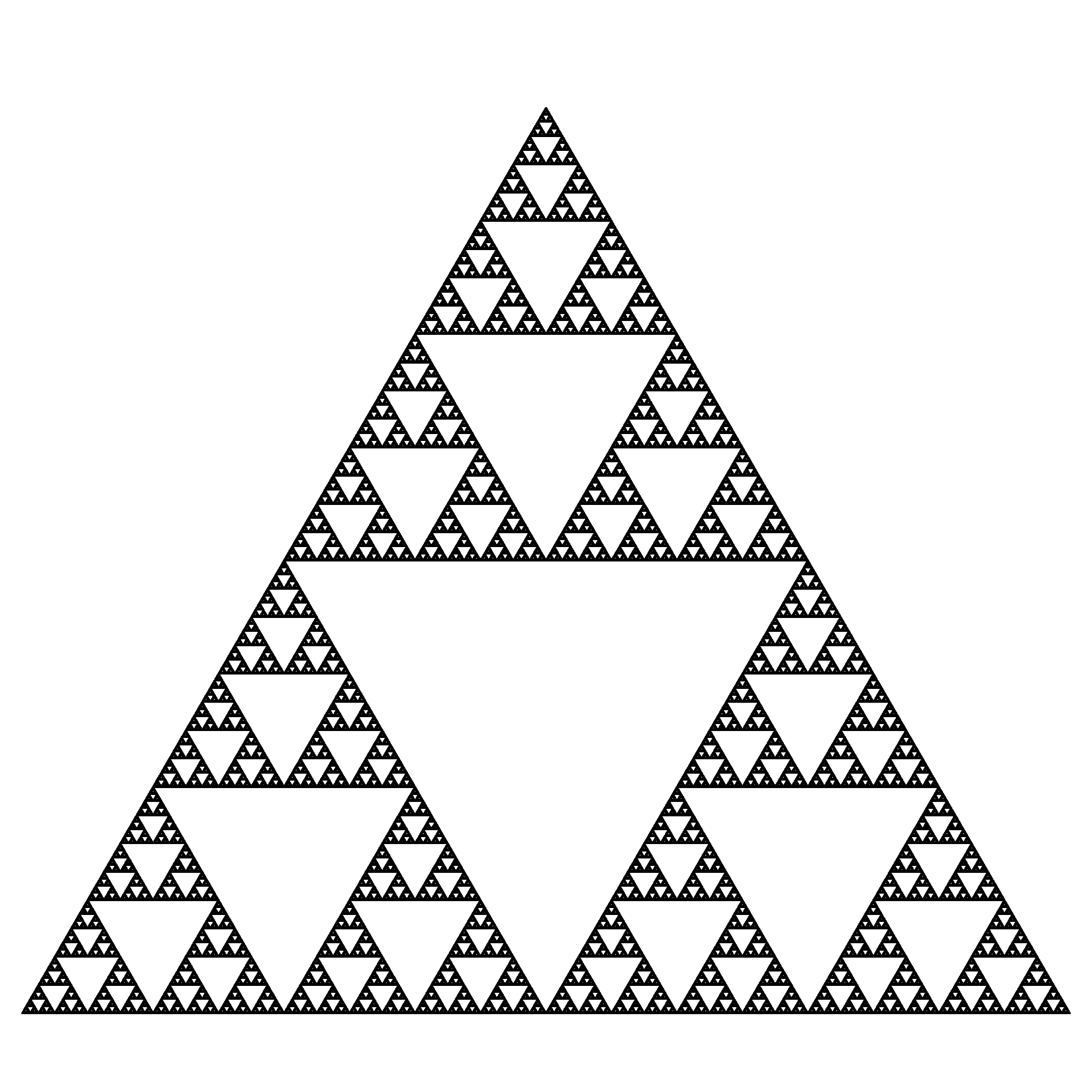}}}
    \qquad
    \subfloat[A self-affine gasket $X_2$ which is not self-similar.]{{\includegraphics[width=5.8cm]{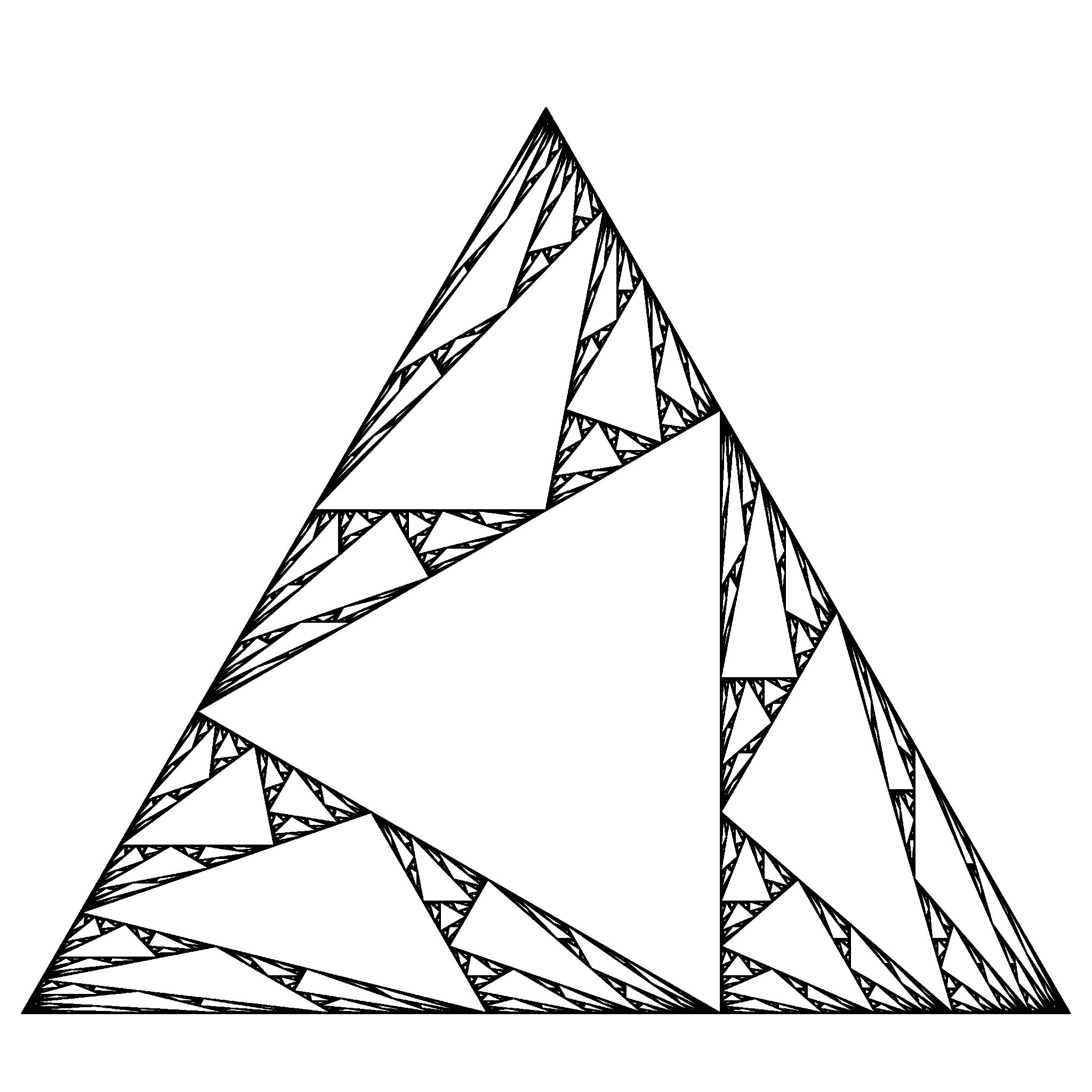}}}
    \caption{By Theorem \ref{th:hutch} there exists a self-similar measure supported on the classical Sierpi\'nski gasket $X_1$ with dimension equal to the Hausdorff dimension of the set itself,  $\log 3/\log 2$. This measure corresponds to that defined simply by giving measure $\frac{1}{3}$ to each of the three copies of $X_1$ with diameter half that of the original, measure $\frac{1}{9}$ to each of the nine sub-copies with diameter $\frac{1}{4}$ that of the original, and so forth. By the combination of Theorems \ref{th:bahora} and \ref{th:main}, for the self-affine gasket $X_2$ there is a gap between the maximum possible dimension of a self-affine measure supported on $X_2$ and the Hausdorff dimension of $X_2$ itself. }
    \label{fi:onlyfigure}%
\end{figure}

We note that the affinity dimension of an invertible affine iterated function system is never zero and therefore the endpoint case $\dimaff(A_1,\ldots,A_N)=0$ of Theorem \ref{th:main} cannot occur. In the other endpoint case $\dimaff(A_1,\ldots,A_N)=d$ it is easy to construct examples in which the normalised restriction of Lebesgue measure to a convex polyhedral body in $\mathbb{R}^d$ may be represented as a self-affine measure with respect to affine transformations which are not simultaneously conjugate to similitudes and whose linear parts do not admit an invariant proper subspace. For example, if $U \subset \mathbb{R}^2$ is an open triangular region then up to Lebesgue measure zero it may be bisected along a line passing through one vertex and its opposite edge into the union of two smaller triangular regions $U_1$ and $U_2$, each having two side lengths smaller than those of the original triangle and one side length in common with it. Taking further bisections if necessary $U$ may be written up to measure zero as a finite union of strictly smaller triangular regions $V_1,\ldots,V_N$ each of which is the image of $U$ under some contracting affine transformation $T_i$. It is clear that if $m$ denotes the normalised Lebesgue measure on $U$ then it satisfies the relation $m=\sum_{i=1}^N m(V_i)(T_i)_*m$ and hence is a self-affine measure with respect to $(T_1,\ldots,T_N)$ which has dimension $2$. In general this construction may be performed in such a way as to ensure that hypotheses (i),(iii) and (iv) of Theorem \ref{th:main} are satisfied; moreover the linear parts of the affinities may be taken to be \emph{strongly} irreducible. The details of this aspect of the construction and of its generalisation to higher dimensions are left to the reader.

We remark that if in Theorem \ref{th:main} instead of measures of the form $m=\sum_{i=1}^N p_i(T_i)_*m$ we were to consider the larger category of 
Borel probability measures $m$ which satisfy an equation of the form
\begin{equation}\label{eq:bigsa}m=\sum_{i_1,\ldots,i_n=1}^N q_{(i_1,\ldots,i_n)} (T_{i_1}\cdots T_{i_n})_*m\end{equation}
for some $n\geq 1$ and some probability vector $(q_{(1,\ldots,1)}, \ldots,q_{(N,\ldots,N)}) \in \mathbb{R}^{N^n}$, then no dimension gap would occur. In two dimensions it is known that the supremum of the Hausdorff dimensions of measures which are self-affine in the broader sense of \eqref{eq:bigsa} can be equal to the affinity dimension $\dimaff(A_1,\ldots,A_N)$ when the conditions of Theorem \ref{th:main} are satisfied. Indeed this fact played a significant role in the proof of Theorem \ref{th:bahora} by extending the results of \cite{MoSh19} which pertain to self-affine measures into a result concerning self-affine sets. Theorem \ref{th:main} demonstrates that outside the context of similarity transformations this supremum is attained only in degenerate cases in which a common invariant subspace exists.

To conclude this introduction let us briefly outline how Theorem \ref{th:main} will be proved. If $T_1,\ldots,T_N$ are contractions of $\mathbb{R}^d$ with respect to some fixed norm then there exists a well-defined coding map $\Pi \colon \{1,\ldots,N\}^{\mathbb{N}} \to \mathbb{R}^d$ with the property
\[\Pi\left[(x_k)_{k=1}^\infty\right] = \lim_{n \to \infty} T_{x_1}\cdots T_{x_n}v\]
for all $v \in \mathbb{R}^d$, and whose image is precisely the attractor of $(T_1,\ldots,T_N)$. It is a well-known result due to Hutchinson \cite[\S4]{Hu81} that a Borel probability measure $m$ on $\mathbb{R}^d$ satisfies $m=\sum_{i=1}^N p_i(T_i)_*m$ if and only if it satisfies $m=\Pi_* \mu$ where $\mu$ is the Bernoulli measure $(\sum_{i=1}^N p_i\delta_i)^{\mathbb{N}}$ on $\{1,\ldots,N\}^{\mathbb{N}}$. This measure $\mu$ is an ergodic invariant measure with respect to the shift transformation $\sigma \colon \{1,\ldots,N\}^{\mathbb{N}} \to \{1,\ldots,N\}^{\mathbb{N}}$ defined by $\sigma[(x_k)_{k=1}^\infty]:=(x_{k+1})_{k=1}^\infty$. 

Now, using a combination of results of A. K\"aenm\"aki \cite{Ka04} and T. Jordan, M. Pollicott and K. Simon \cite{JoPoSi07}, one may show that if an ergodic shift-invariant measure $\mu$ on $\{1,\ldots,N\}^n$ has the property $\dim \Pi_*\mu = \dimaff(T_1,\ldots,T_N)$ then it necessarily maximises the quantity
\[h(\mu)+ \lim_{n \to \infty} \frac{1}{n}\int \log \varphi^s(A_{x_1}\cdots A_{x_n})d\mu\left[(x_k)_{k=1}^\infty\right]\]
over all shift-invariant Borel probability measures on $\{1,\ldots,N\}^{\mathbb{N}}$, where $s:=\dimaff(T_1,\ldots,T_N)$, $A_i$ denotes the linear part of the affine transformation $T_i$ and $h(\mu)$ denotes the entropy of the measure $\mu$ with respect to the transformation $\sigma$. Measures which maximise this quantity have been named \emph{K\"aenm\"aki measures}. The critical step in proving Theorem \ref{th:main} is to show that under the hypotheses of that theorem there cannot exist a K\"aenm\"aki measure which is also a Bernoulli measure. The dimension gap result then follows by relatively straightforward compactness considerations.

The proof of this statement relies on a general theorem on the structure of K\"aenm\"aki measures which was established by J. Bochi and the first named author in \cite{BoMo18}, building on the earlier works \cite{FeKa11} and \cite{KaMo18}. Let us illustrate how this argument functions in a simple special case. Suppose that the semigroup generated by $A_1,\ldots,A_N$ is \emph{Zariski dense} as a subgroup of $\GL_d(\mathbb{R})$: that is, suppose that every function $\phi \colon \GL_d(\mathbb{R}) \to \mathbb{R}$ which corresponds to a polynomial function of the matrix entries and vanishes on the semigroup generated by $A_1,\ldots,A_N$ also vanishes identically on $\GL_d(\mathbb{R})$. 
(Equivalently, $A_1,\ldots,A_N$ is not contained in any proper algebraic subgroup of $\GL_d(\mathbb{R})$.) Then it follows by a result of A. K\"aenm\"aki and the first named author in \cite{KaMo18} that if $\mu$ is a K\"aenm\"aki measure for $(T_1,\ldots,T_N)$ then it satisfies
\begin{equation}\label{eq:bg2}C^{-1}\leq \frac{\mu(\{(x_k)\colon x_j=i_j\text{ for all }j=1,\ldots,n\})}{\varphi^s(A_{i_1}\cdots A_{i_n})} \leq C\end{equation}
for some constant $C>1$, for all $i_1,\ldots,i_n \in \{1,\ldots,N\}$ and $n \geq 1$. But if $\mu$ is also a Bernoulli measure, the value of the numerator depends only on which symbols appear in the sequence $i_1,\ldots,i_n$ and not on the order in which those symbols appear. This implies that the same property must hold for $\varphi^s(A_{i_1}\cdots A_{i_n})$ up to the introduction of a scalar multiplicative factor $C^2$. Using this principle one may deduce that if $B_1$ and $B_2$ belong to the semigroup generated by $A_1,\ldots,A_N$ then necessarily
\begin{equation}\label{eq:basicgibbs}C^{-3} \leq \frac{\varphi^s((B_1B_2)^n)}{\varphi^s(B_1^n)\varphi^s(B_2^n)} \leq C^3\end{equation}
for every $n \geq 1$. Now if $\lambda_i(B)$ denotes the $i^{\mathrm{th}}$ largest of the absolute values of the $d$ eigenvalues of $B \in \GL_d(\mathbb{R})$, and $0<s<d$, one may show that
\[\lim_{n \to \infty} \varphi^s(B^n)^{\frac{1}{n}}=\lambda_1(B)\cdots \lambda_{\lfloor s\rfloor}(B) \lambda_{\lceil s\rceil}(B)^{s-\lfloor s\rfloor}=:\xi^s(B).\]
Taking the power $\frac{1}{n}$ and letting $n \to \infty$ in \eqref{eq:basicgibbs} it follows that the function $\xi^s$ just defined satisfies $\xi^s(B_1B_2)=\xi^s(B_1)\xi^s(B_2)$ for all $B_1,B_2$ in the semigroup generated by the linear maps $A_1,\ldots,A_N$. But this turns out to be impossible for a semigroup which is Zariski dense in $\GL_d(\mathbb{R})$, essentially by a theorem of Y.~Benoist (later reproven by J.-F.~Quint using a different method, see Theorem 7.4 and Proposition 9.8 of \cite{bq.book} and additionally \cite{benoist.linear2,quint.schottky}). 

The extension of this argument to the more general circumstances of Theorem \ref{th:main} requires us to engage with a number of complications. Similarly to the special case described above, the core of the proof operates by assuming that hypotheses (i)--(iii) of Theorem \ref{th:main} hold and that a K\"aenm\"aki measure exists which is a Bernoulli measure, and proceeds to show that the linear maps $|\det A_i|^{-1/d}A_i$ necessarily belong to a compact group, contradicting (iv). In general under the hypotheses of Theorem \ref{th:main} there may be multiple inequivalent K\"aenm\"aki measures. (This remains true even under slightly stronger hypotheses: see \cite{MoSe19}.) The hypotheses imply that at least one of these measures is Bernoulli, but \emph{a priori} other K\"aenm\"aki measures may not be. In this case the denominator of \eqref{eq:bg2} will not correspond to the function $\varphi^s(A_{i_1}\cdots A_{i_n})$ but to some more complicated function derived from the action of $A_{i_1}\cdots A_{i_n}$ on finite unions of proper subspaces of exterior powers of $\mathbb{R}^d$ (see \cite[\S2]{BoMo18}). The more complicated structure of this function necessitates further steps in order to deduce the multiplicativity of some analogue of the function $\xi^s$ defined above, which in general will correspond to some spectral data relating to the action of a finite-index subsemigroup of the semigroup generated by $A_1,\ldots,A_N$ on certain pairs of subspaces of exterior powers of $\mathbb{R}^d$. This multiplicativity will allow us to show that certain homomorphic images of a finite-index subsemigroup of the semigroup generated by $|\det A_1|^{-1/d} A_1,\ldots,|\det A_N|^{-1/d}A_N$ are contained in compact groups, and this can be applied to deduce that the elements of that finite-index subsemigroup act as ``simultaneously normal'' linear maps on certain subspaces of particular exterior powers of $\mathbb{R}^d$: that is, on those spaces there exists an inner product structure with respect to which the linear maps act as orthogonal direct sums of linear similitudes. An extensive additional argument is then required to show that these normal linear maps actually \emph{are} similitudes. This additional argument makes use of the variational characterisation of K\"aenm\"aki measures to bound a weighted sum of the Lyapunov exponents of the other K\"aenm\"aki measures and so force the remaining K\"aenm\"aki measures to also be Bernoulli measures. It is then straightforward to deduce that the entire semigroup generated by $|\det A_1|^{-1/d} A_1,\ldots,|\det A_N|^{-1/d}A_N$ acts on these subspaces of exterior powers by similitudes. Still further arguments are required to deal with the possibility that these subspaces of the exterior powers may be proper.  The first two parts of the argument, in which the linear maps are first shown to act normally and then shown to act by similitudes on certain subspaces of exterior powers, are dealt with in section \ref{se:irr-case}. The final part, in which the action on proper subspaces of exterior powers is related to the action on $\mathbb{R}^d$, forms a separate argument which is presented in section \ref{se:gen-case}. 

The remainder of the article is therefore structured as follows. In the following section we review such background on the thermodynamic formalism of affine iterated function systems as is necessary to state our main technical theorem, Theorem \ref{th:main-tech}, which asserts that under the hypotheses of Theorem \ref{th:main} a K\"aenm\"aki measure cannot be a Bernoulli measure. In section \ref{se:proof-of-main} we derive Theorem \ref{th:main} from Theorem \ref{th:main-tech}; this is the most technically straightforward part of the proof of Theorem \ref{th:main}. Section \ref{se:rev} then reviews key concepts from the theory of linear algebraic groups which will be used in the proof of Theorem \ref{th:main-tech}. Section \ref{se:irr-case} proves a key special case of Theorem \ref{th:main-tech} in which the irreducibility of certain representations is assumed, and section \ref{se:gen-case} applies this result to deduce the general case.

During peer review it was brought to our attention that some of the technical arguments underlying Theorem \ref{th:main} may be expressed in intrinsic terms as a statement concerning potentials defined in terms of reductive linear algebraic groups. This is discussed in more detail in the appendix.

%
%

\section{Subadditive thermodynamic formalism and the main technical theorem}\label{se:tech-thm}

Let $\Sigma_N$ denote the set $\{1,\ldots,N\}^{\mathbb{N}}$ equipped with the infinite product topology (with respect to which it is compact and metrisable) and let $\sigma \colon \Sigma_N \to \Sigma_N$ denote the shift transformation $(x_k)_{k=1}^\infty \mapsto (x_{k+1})_{k=1}^\infty$ which is a continuous surjection. When $N$ is understood let $\mathcal{M}_\sigma$ denote the set of all $\sigma$-invariant Borel probability measures on $\Sigma_N$. Via the Riesz representation theorem we identify $\mathcal{M}_\sigma$ with a subset of $C(\Sigma_N)^*$ equipped with the corresponding weak-* topology, and in this topology it is compact and metrisable; a sequence of measures $(\mu_n)_{n=1}^\infty$ in $\mathcal{M}_\sigma$ converges to a measure $\mu \in \mathcal{M}_\sigma$ if and only if $\lim_{n \to \infty} \int f\,d\mu_n=\int f\,d\mu$ for every $f \in C(\Sigma_N)$. 

We define $\Sigma_N^*$ to be the set of all finite sequences $\iii=(i_k)_{k=1}^n \in \{1,\ldots,N\}^n$, which we refer to as \emph{words}. If $\iii=(i_k)_{k=1}^n$ then we write $|\iii|=n$ and define this to be the  \emph{length} of the word $\iii$. Given two words $\iii=(i_k)_{k=1}^n, \jjj=(j_k)_{k=1}^m \in \Sigma_N^*$ we define their concatenation $\iii \jjj$ to be the word of length $|\iii|+|\jjj|=n+m$ with first $n$ symbols $i_1,\ldots,i_n$ and subsequent symbols $j_1,\ldots,j_m$. We define the concatenation of more than two words (e.g. $\iii \jjj \kkk$ where $\iii,\jjj,\kkk \in \Sigma_N^*$) in the obvious fashion, and if $\iii \in \Sigma_N^*$ and $n \geq 1$ we let $\iii^n$ denote the concatenation $\iii \iii \cdots \iii$ of $n$ copies of $\iii$. If $A_1,\ldots,A_N \in \GL_d(\mathbb{R})$ are understood then we write $A_\iii:=A_{i_1}\cdots A_{i_n}$ and observe that $A_\iii A_\jjj = A_{\iii\jjj}$ for all $\iii,\jjj \in \Sigma_N^*$. If $x=(x_k)_{k=1}^\infty \in \Sigma_N$ then we define $x|_n$ to be the word $(x_k)_{k=1}^n \in \Sigma_N^*$. If $\iii \in \Sigma_N^\ast$ then we define the \emph{cylinder set} $[\iii]$ to be the set of all $x \in \Sigma_N$ such that $x|_n=\iii$. Every cylinder set is clopen and cylinder sets form a basis for the topology of $\Sigma_N$. The linear span of the set of all characteristic functions of cylinder sets is dense in $C(\Sigma_N)$ and therefore a sequence of measures $(\mu_n)_{n=1}^\infty$ in $\mathcal{M}_\sigma$ converges to a measure $\mu \in \mathcal{M}_\sigma$ if and only if $\lim_{n \to \infty} \mu_n([\iii])=\mu([\iii])$ for every $\iii \in \Sigma_N^*$.

We will say that $\mu \in \mathcal{M}_\sigma$ is a \emph{Bernoulli measure} if there exists a probability vector $(p_1,\ldots,p_N)$ such that $\mu([i_1\cdots i_n])=p_{i_1}\cdots p_{i_n}$ for all $i_1,\ldots,i_n \in\{1,\ldots,N\}$ and all $n \geq 1$. (We permit cases in which some of the entries of the probability vector are zero.) Clearly Bernoulli measures on $\Sigma_N$ are in one-to-one correspondence with probability vectors $(p_1,\ldots,p_N)$. It is not difficult to see that the natural map from the $(N-1)$-simplex of probability vectors to the set of corresponding Bernoulli measures on $\Sigma_N$ is weak-* continuous, and in particular the set of all Bernoulli measures on $\Sigma_N$ is weak-* compact. Every Bernoulli measure is ergodic with respect to $\sigma$.

Let us say that a \emph{submultiplicative potential}, or simply a \emph{potential}, is a function $\Phi \colon \Sigma_N^* \to (0,+\infty)$ such that $\Phi(\iii \jjj) \leq \Phi(\iii)\Phi(\jjj)$ for all $\iii,\jjj \in \Sigma_N^*$. We define the \emph{pressure} of $\Phi$ to be the limit
\[P(\Phi):=\lim_{n \to \infty} \frac{1}{n}\log \sum_{|\iii|=n}\Phi(\iii)\]
and observe that this limit exists by subadditivity. If $\Phi$ is a submultiplicative potential then we define a sequence of functions $\Phi_n \colon \Sigma_N \to (0,+\infty)$ by $\Phi_n(x):=\Phi(x|_n)$ for every $x \in \Sigma_N$ and $n \geq 1$. In this case we observe that each $\Phi_n$ is continuous (since it depends on only finitely many co-ordinates of $x \in \Sigma_N$) and that the subadditivity property $\log \Phi_{n+m}(x) \leq \log \Phi_n(\sigma^mx)+\log \Phi_m(x)$ is satisfied by the sequence of continuous functions $\log \Phi_n \colon \Sigma_N \to \mathbb{R}$. As a consequence of this property, for each ergodic $\mu \in \mathcal{M}_\sigma$ the following limit exists and defines
\[\Lambda(\Phi,\mu):=\lim_{n \to \infty}\frac{1}{n}\int \log \Phi_n(x)\,d\mu(x) = \lim_{n \to \infty}\frac{1}{n}\sum_{|\iii|=n} \mu([\iii])\log\Phi(\iii) \in [-\infty,+\infty).\] The next result is a special case of the subadditive variational principle of Cao, Feng and Huang (\cite[Theorem 1.1]{CaFeHu08}):
\begin{proposition}\label{pr:varp}
Let $N \geq 2$ and let $\Phi \colon \Sigma_N \to (0,+\infty)$ be a submultiplicative potential. Then
\begin{equation}\label{eq:eq}P(\Phi)=\sup_{\mu \in \mathcal{M}_\sigma}\left[ h(\mu) + \Lambda(\Phi,\mu)\right].\end{equation}
\end{proposition}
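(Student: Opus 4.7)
The plan is to establish the two inequalities separately. For the direction $P(\Phi) \geq h(\mu) + \Lambda(\Phi,\mu)$ I would argue as follows. Given $\mu \in \mathcal{M}_\sigma$, let $\mathcal{P}_n$ denote the partition of $\Sigma_N$ into cylinder sets of length $n$. Since $\log \Phi_n$ is constant on each element of $\mathcal{P}_n$ with value $\log \Phi(\iii)$ on $[\iii]$, the elementary inequality $\sum_A \mu(A)\log(x_A/\mu(A)) \leq \log \sum_A x_A$ (Jensen applied to the concave function $\log$) with $x_{[\iii]} = \Phi(\iii)$ gives
\[
H_\mu(\mathcal{P}_n) + \int \log \Phi_n\,d\mu \;\leq\; \log \sum_{|\iii|=n}\Phi(\iii).
\]
Dividing by $n$ and letting $n \to \infty$ yields $h(\mu) + \Lambda(\Phi,\mu) \leq P(\Phi)$, using that $h(\mu) = \lim_n \frac{1}{n}H_\mu(\mathcal{P}_n)$ for the full shift.

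For the reverse direction, I would construct an invariant measure attaining (or approximating) the supremum by a Misiurewicz-type averaging. For each $n \geq 1$ put $Z_n := \sum_{|\iii|=n}\Phi(\iii)$ and choose for every $\iii \in \Sigma_N^*$ a point $x_\iii \in [\iii]$ (e.g.\ the periodic point $\iii\iii\iii\cdots$). Define
\[
\nu_n := \frac{1}{Z_n}\sum_{|\iii|=n}\Phi(\iii)\,\delta_{x_\iii},
\qquad
\mu_n := \frac{1}{n}\sum_{k=0}^{n-1}\sigma_*^k \nu_n,
\]
and extract, by weak-$*$ compactness of $\mathcal{M}_\sigma$, a subsequential limit $\mu \in \mathcal{M}_\sigma$. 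A direct computation gives $H_{\nu_n}(\mathcal{P}_n) + \int \log \Phi_n\,d\nu_n = \log Z_n$, and a standard convexity/averaging manipulation converts this into a lower bound for $\frac{1}{m}H_{\mu_n}(\mathcal{P}_m) + \frac{1}{m}\int\log\Phi_m\,d\mu_n$ along $n \to \infty$ for any fixed $m$. Passing to the weak-$*$ limit along the chosen subsequence, and using the continuity of $\Phi_m$ (it depends on finitely many coordinates) together with the upper semicontinuity of the entropy map $\mu \mapsto H_\mu(\mathcal{P}_m)$, one deduces
\[
\frac{1}{m}H_\mu(\mathcal{P}_m) + \frac{1}{m}\int \log \Phi_m\,d\mu \;\geq\; P(\Phi).
\]

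The key obstacle is the passage from the bounds indexed by $m$ to the desired inequality $h(\mu) + \Lambda(\Phi,\mu) \geq P(\Phi)$, because $\log \Phi_m$ is only subadditive rather than a Birkhoff sum. The remedy is Kingman's lemma applied to the sequence of continuous $\log \Phi_m$: for every $\mu \in \mathcal{M}_\sigma$ the sequence $\int \log\Phi_m\,d\mu$ is subadditive in $m$, so by Fekete's lemma
\[
\Lambda(\Phi,\mu) \;=\; \inf_{m \geq 1} \frac{1}{m}\int \log \Phi_m\,d\mu.
\]
Thus $\frac{1}{m}\int \log \Phi_m\,d\mu \geq \Lambda(\Phi,\mu)$ for each $m$. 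Combined with $\frac{1}{m}H_\mu(\mathcal{P}_m) \to h(\mu)$ as $m\to\infty$ in the previously displayed inequality, letting $m \to \infty$ yields $h(\mu) + \Lambda(\Phi,\mu) \geq P(\Phi)$, which completes the proof.

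The main technical difficulty sits in the averaging step for the $\mu_n$: one must show that the Jensen-type identity $H_{\nu_n}(\mathcal{P}_n) + \int \log\Phi_n\,d\nu_n = \log Z_n$ survives in a useful form after averaging over shifts and passing to a weak-$*$ limit, despite the fact that $\log \Phi_n$ is not a Birkhoff sum. Handling this requires partitioning the orbit segment $0,1,\ldots,n-1$ into blocks of length $m$ and controlling the boundary contributions by $O(m)$ terms bounded by $m \max_{|\iii| \leq m}|\log \Phi(\iii)|$, which is absorbed when we divide by $n$ and let $n \to \infty$. Once this bookkeeping is done, the two directions combine to give the variational identity \eqref{eq:eq}.
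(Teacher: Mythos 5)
The paper does not give its own proof of Proposition~\ref{pr:varp}: it cites it directly as a special case of the subadditive variational principle of Cao, Feng and Huang \cite[Theorem 1.1]{CaFeHu08}. Your argument is essentially the standard Misiurewicz-style proof that underlies that reference, and it is correct in substance.

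A few remarks on the details you should tighten. First, in the upper-bound direction your appeal to Jensen is exactly right; note that $\Phi>0$ ensures you never divide by zero, and when $\mu([\iii])=0$ the corresponding contribution is interpreted as $0$. Second, on the full shift the partition $\mathcal{P}_m$ consists of clopen sets, so $\mu\mapsto H_\mu(\mathcal{P}_m)$ is actually \emph{continuous} in the weak-$*$ topology (not merely upper semicontinuous); you only need the one-sided estimate, so this changes nothing, but the stronger property is available and simplifies the passage to the limit. Third, what you invoke as ``Kingman's lemma'' is really Fekete's lemma applied to the numerical subadditive sequence $m\mapsto\int\log\Phi_m\,d\mu$ (Kingman's theorem concerns $\mu$-a.e.\ convergence, which is not used here); you in fact say ``by Fekete's lemma'' in the same sentence, so this is just a slip of terminology. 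Finally, in the block-averaging step one should say explicitly why the boundary error term is finite: since $\Phi$ is strictly positive and there are finitely many words of each length, $\max_{|\iii|\le m}|\log\Phi(\iii)|$ is a finite constant depending only on $m$, so the $O(m)$ boundary contributions are bounded by a constant depending on $m$ alone and vanish after dividing by $n$ and letting $n\to\infty$. With those small clarifications your proof is complete and aligns with the standard argument the paper's cited reference uses.
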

When $\mu$ attains the supremum \eqref{eq:eq} we call it an \emph{equilibrium state} for the potential $\Phi$. If $\Phi$ is a submultiplicative potential then by subadditivity
\[\Lambda(\Phi,\mu)=\inf_{n \geq 1}\frac{1}{n}\sum_{|\iii|=n}\mu([\iii])\log\Phi(\iii)\]
and also
\[h(\mu)=\lim_{n \to \infty} \frac{1}{n}\sum_{|\iii|=n}-\mu([\iii])\log\mu([\iii]) = \inf_{n \geq 1}\frac{1}{n}\sum_{|\iii|=n}-\mu([\iii])\log\mu([\iii])\]
and since each function $\mu \mapsto \mu([\iii])$ is continuous, these formulas imply that the function $\mu \mapsto h(\mu)+\Lambda(\Phi,\mu)$ is the pointwise infimum of a family of continuous functions $\mathcal{M}_\sigma \to \mathbb{R}$, and hence is an upper semi-continuous function $\mathcal{M}_\sigma \to [-\infty,+\infty)$. In particular it attains its maximum by the compactness of $\mathcal{M}_\sigma$ and so at least one equilibrium state exists for any specified potential $\Phi$.

A submultiplicative potential $\Phi$ will be called \emph{quasi-multiplicative} if there exist a finite set $F \subset \Sigma_N^*$ and a real number $\delta>0$ such that
\begin{equation}\label{eq.def.quasimult}
\max_{\kkk \in F} \Phi(\iii\kkk\jjj)\geq \delta \Phi(\iii)\Phi(\jjj)
\end{equation}
for all $\iii,\jjj \in \Sigma_N^*$. The significance of this condition is that it both guarantees the uniqueness of the equilibrium state of $\Phi$ and provides explicit information about its structure:
\begin{proposition}\label{pr:qm-unique}
Let $\Phi \colon \Sigma_N^* \to \mathbb{R}$ be a submultiplicative and quasi-multiplicative potential. Then there exists a unique equilibrium state $\mu$ for $\Phi$. Furthermore there exists $C>0$ such that
\[C^{-1}e^{-|\iii|P(\Phi)} \Phi(\iii) \leq \mu([\iii]) \leq Ce^{-|\iii|P(\Phi)}\Phi(\iii)\]
for all $\iii \in \Sigma_N^*$.
\end{proposition}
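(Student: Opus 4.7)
The plan is to first establish matching two-sided bounds on the partition function $Z_n := \sum_{|\iii|=n}\Phi(\iii)$, then construct an equilibrium state as a weak-$*$ accumulation point of discrete measures weighted by $\Phi$, and finally derive the claimed Gibbs-type estimate, from which uniqueness follows easily.

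I would start by observing that submultiplicativity of $\Phi$ gives $Z_{n+m} \leq Z_n Z_m$, and hence Fekete's lemma yields the crude lower bound $Z_n \geq e^{nP(\Phi)}$ for every $n \geq 1$. Quasi-multiplicativity supplies a matching upper bound: applying \eqref{eq.def.quasimult} to every pair $(\iii,\jjj)$ with $|\iii|=n$, $|\jjj|=m$ and summing over both sides, with each length-$(n+m+|\kkk|)$ word arising at most $|F|$ times as a concatenation $\iii\kkk\jjj$, produces $Z_n Z_m \leq (|F|/\delta)\max_{\kkk \in F} Z_{n+m+|\kkk|}$. Combined with $Z_{n+m+k}\leq Z_{n+m}Z_k$ and the boundedness of $Z_k$ for $k \leq L := \max_{\kkk \in F}|\kkk|$, this shows that $C_1^{-1}Z_n$ is supermultiplicative for some $C_1>0$, and therefore $Z_n \leq C_1 e^{nP(\Phi)}$ for every $n$.

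Next, for each $n$ I would pick an arbitrary point $x_\iii \in [\iii]$ for every word $\iii$ of length $n$ and form the measure $\mu_n := Z_n^{-1}\sum_{|\iii|=n}\Phi(\iii)\delta_{x_\iii}$; setting $\nu_n := n^{-1}\sum_{k=0}^{n-1}\sigma_*^k\mu_n$ and passing to a weak-$*$ accumulation point yields a shift-invariant probability measure $\mu \in \mathcal{M}_\sigma$. The crucial step is to verify that $\mu$ satisfies the Gibbs bound on cylinders. Fixing $\jjj$ of length $m$ and $n \geq m+L$, the upper estimate is straightforward: submultiplicativity gives
\[
\mu_n([\jjj]) = Z_n^{-1}\sum_{|\iii'|=n-m}\Phi(\jjj\iii') \leq Z_n^{-1}\Phi(\jjj)Z_{n-m} \leq C_1 e^{-mP(\Phi)}\Phi(\jjj).
\]
The lower estimate is where quasi-multiplicativity is essential: for each word $\iii$ of length $n-m-L$ choose $\kkk(\iii) \in F$ with $\Phi(\jjj\kkk(\iii)\iii) \geq \delta\Phi(\jjj)\Phi(\iii)$, observe that $\jjj\kkk(\iii)\iii$ is a prefix of length at most $n$ of a cylinder contained in $[\jjj]$, extend arbitrarily to length $n$, and sum over $\iii$. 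With the overcounting bounded by $|F|\cdot N^L$ and using the upper bound on $Z_{n-m-L}$, this produces $\mu_n([\jjj]) \geq C^{-1} e^{-mP(\Phi)}\Phi(\jjj)$. These bounds persist under the Cesàro averaging (up to adjusting constants via submultiplicativity of $\Phi$ along the shifts) and under passage to the weak-$*$ limit, because indicators of cylinders are continuous on $\Sigma_N$.

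Finally, the Gibbs bound together with the Shannon--McMillan--Breiman theorem and the subadditive ergodic theorem immediately yields $h(\mu) + \Lambda(\Phi,\mu) = P(\Phi)$, so $\mu$ is an equilibrium state. For uniqueness, if $\mu'$ is any other equilibrium state I would replace it by an ergodic component, then deduce via Proposition \ref{pr:varp} and the upper Gibbs bound for $\mu$ that $\mu'$ must itself obey the same two-sided Gibbs bound; the two ergodic measures are then mutually absolutely continuous with uniformly bounded Radon--Nikodym derivatives, and ergodicity forces $\mu = \mu'$. The principal technical difficulty is the Gibbs lower bound, where the dependence of the bridging word $\kkk(\iii)$ on $\iii$ has to be tracked to avoid overcounting when distinct $\iii$ produce the same length-$n$ extension; this is absorbed into the constant using that $|F|$ and $L$ are fixed.
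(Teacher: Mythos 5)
The paper itself does not prove Proposition~\ref{pr:qm-unique}; it simply cites \cite[Theorem 5.5]{Fe11} and \cite[\S 3]{KaRe14}. So the comparison is against the standard literature rather than against an argument in the text. Your overall scaffolding (two-sided estimate for $Z_n$; discrete measures $\mu_n$ weighted by $\Phi$; Ces\`aro averaging; Gibbs bound; variational computation; uniqueness from Gibbs + ergodicity) is the right skeleton, and the partition-function step and the Gibbs \emph{upper} bound are carried out correctly. But the Gibbs \emph{lower} bound, which you yourself identify as the technical heart, has a genuine gap.

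The problem is in the phrase ``extend arbitrarily to length $n$.'' After choosing $\kkk(\iii)\in F$ so that $\Phi(\jjj\kkk(\iii)\iii)\ge\delta\Phi(\jjj)\Phi(\iii)$, the word $\jjj\kkk(\iii)\iii$ only has length $n-L+|\kkk(\iii)|\le n$, so you must pad it by a suffix $\mathbf{s}$ of length $L-|\kkk(\iii)|$ to land on the level $n$ that $\mu_n$ actually sees. But the quantity you then need to sum is $\Phi(\jjj\kkk(\iii)\iii\mathbf{s})$, and submultiplicativity gives only $\Phi(\jjj\kkk(\iii)\iii\mathbf{s})\le\Phi(\jjj\kkk(\iii)\iii)\Phi(\mathbf{s})$ --- an \emph{upper} bound --- and nothing prevents the tail from killing the value. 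Without a reverse inequality there is no way to conclude $\sum_{|\iii'|=n-m}\Phi(\jjj\iii')\gtrsim\Phi(\jjj)Z_{n-m-L}$. Tracking the overcounting by $|F|\cdot N^L$, as you propose, is not where the difficulty lies. The standard remedies in the literature sidestep the padding altogether: for instance, one can weight the levels $p\in\{n-L+1,\dots,n\}$ by $Z_p$ and work with $\tilde\mu_n:=\bigl(\sum_p Z_p\bigr)^{-1}\sum_p Z_p\mu_p$, so that the words $\jjj\kkk(\iii)\iii$ of \emph{variable} length all contribute at once and $\sum_p Z_p\mu_p([\jjj])\ge\delta\Phi(\jjj)Z_{n-m-L}$ is an honest inequality; or one applies quasi-multiplicativity twice (once on each side of $\jjj$) inside the Ces\`aro average so that the varying bridge lengths are absorbed by the sum over $k$. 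Either way, a further idea is required beyond what you describe.

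A secondary remark: the uniqueness step as written is also incomplete. From the variational principle and the Gibbs property of $\mu$ you get, for an ergodic equilibrium state $\mu'$, that $\frac1n\log\bigl(\mu'([x|_n])/\mu([x|_n])\bigr)\to 0$ for $\mu'$-a.e.\ $x$; this is a subexponential estimate on the ratio, not the uniform two-sided Gibbs bound for $\mu'$, and so does not by itself give the asserted bounded Radon--Nikodym derivative. The usual route is either to establish a strong mixing (quasi-Bernoulli) property for the Gibbs measure $\mu$ from the quasi-multiplicative Gibbs inequality and then conclude $\mu'\ll\mu$, or to run a relative-entropy argument tailored to subadditive potentials; simply citing Proposition~\ref{pr:varp} and the upper Gibbs bound does not close this step.
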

We refer to the above inequality between $\mu([\iii])$ and $\Phi(\iii)$ as the \emph{Gibbs inequality} for the potential $\Phi$ and measure $\mu$. 
Proposition \ref{pr:qm-unique} has been proved and re-proved in various forms across a number of works: we mention for example \cite[Theorem 5.5]{Fe11}, \cite[\S3]{KaRe14}.

The fundamental example of a potential from the perspective of this article will be the singular value potential $\Phi^s(\iii):=\varphi^s(A_\iii)$, where $A_1,\ldots,A_N \in \GL_d(\mathbb{R})$ are understood; this potential was investigated extensively by A. K\"aenm\"aki in \cite{Ka04} and the properties of its equilibrium states were developed in subsequent articles such as \cite{BoMo18,FeKa11,KaMo18}. Our argument will however require us to work with potentials which have a unique equilibrium state, and the singular value potential does not have this property unless additional constraints are imposed beyond the hypotheses of Theorem \ref{th:main}. In particular, although the irreducibility of $(A_1,\ldots,A_N)$ as hypothesised in Theorem \ref{th:main} ensures this uniqueness for $d=2$, it is not sufficient for this when $d>2$ and $1<s<d-1$ (see for example \cite[\S9]{KaMo18}). This problem cannot be alleviated by assuming strong irreducibility in place of irreducibility \cite{MoSe19}. 

The core technical result of this article is the following: 
\begin{theorem}\label{th:main-tech}
Let $(A_1,\ldots,A_N)\in \GL_d(\mathbb{R})^N$ be irreducible and define a potential $\Phi \colon \Sigma_N^* \to (0,+\infty)$ by 
\[\Phi(\iii):=\prod_{i=1}^d \sigma_i(A_\iii)^{\alpha_i}\]
where $\alpha_1 \geq \alpha_2 \geq \cdots \geq \alpha_d \geq 0$ and $\alpha_1>\alpha_d$. If $\Phi$ has an equilibrium state which is a Bernoulli measure then the linear maps $|\det A_1|^{-1/d}A_1,\ldots,  |\det A_N|^{-1/d}A_N$ are all contained in a compact subgroup of $\GL_d(\mathbb{R})$. 
\end{theorem}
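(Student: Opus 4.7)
The plan is to combine a Gibbs-type structure theorem for equilibrium states of submultiplicative potentials, due to Bochi and Morris, with Benoist's theorem on Jordan projections of Zariski-dense subsemigroups of reductive linear groups. The Bernoulli hypothesis supplies a strong permutation symmetry on $\mu$; pulling this back through the Gibbs inequality forces a spectral function on the semigroup generated by $A_1,\ldots,A_N$ to be multiplicative. Benoist's theorem then converts this multiplicativity into an algebraic-character constraint on the Zariski closure, which together with the strict inequality $\alpha_1>\alpha_d$ will force the normalized matrices $B_i:=|\det A_i|^{-1/d}A_i$ into a compact subgroup of $\GL_d(\R)$.

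First I would invoke the structure theorem for equilibrium states of $\Phi$ from \cite{BoMo18}. Although $\Phi$ itself need not be quasi-multiplicative when $d>2$, irreducibility of $(A_1,\ldots,A_N)$ should guarantee an auxiliary quasi-multiplicative potential $\Psi$, constructed from the action of $A_\iii$ on a finite family of pairs of proper subspaces of the exterior powers $\Lambda^k\R^d$, which has the same pressure as $\Phi$ and satisfies a two-sided Gibbs inequality $\mu([\iii])\asymp e^{-|\iii|P(\Phi)}\Psi(\iii)$. I would then exploit the Bernoulli hypothesis: since $\mu([\iii])=\prod_k p_{i_k}$ depends only on the multiset of symbols in $\iii$, the words $(\iii\jjj)^n$ and $\iii^n\jjj^n$ have equal $\mu$-measure, forcing $\Psi$ to be approximately invariant under such rearrangements with bounded multiplicative distortion. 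Taking $n$-th roots and passing to the limit, the function $\xi(\iii):=\lim_{n\to\infty}\Psi(\iii^n)^{1/n}$, which depends only on the Jordan projection of $A_\iii$, should satisfy the multiplicative relation $\xi(\iii\jjj)=\xi(\iii)\xi(\jjj)$; equivalently, $\log\xi$ descends to a semigroup homomorphism from the semigroup $\Gamma$ generated by the $A_i$ into $\R$.

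Next I would apply Benoist's theorem. Let $\Gamma_0$ be the semigroup generated by the normalized matrices $B_i$ inside $\{g\in\GL_d(\R):|\det g|=1\}$, and let $G_0$ denote its Zariski closure; after passing to a finite-index subsemigroup, irreducibility ensures $G_0$ is connected and reductive. The functional $\log\xi$ is a linear combination of Jordan-projection logarithms weighted by the $\alpha_i$. By Benoist's theorem (see \cite{benoist.linear2} and \cite[Thm.~7.4, Prop.~9.8]{bq.book}), the Jordan projections of $\Gamma_0$ have nonempty interior in a Cartan subalgebra $\mathfrak{a}$ of $G_0$, so additivity of $\log\xi$ on $\Gamma_0$ forces this functional to come from a rational character of $G_0$. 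But the strict inequality $\alpha_1>\alpha_d$ prevents it from factoring through $\log|\det|$ on the relevant exterior powers, so $G_0$ must have trivial noncompact Cartan in those directions; i.e.\ the image of $\Gamma_0$ in the corresponding exterior-power representation is contained in a compact subgroup.

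The most delicate part of the argument is the final descent: extracting from compactness of the image in exterior powers the conclusion that $\Gamma_0$ itself lies in a compact subgroup of $\GL_d(\R)$. Since the exterior powers may fail to be irreducible even when the standard representation is irreducible, Step~3 at first only yields that the $B_i$ act as orthogonal direct sums of similitudes --- a ``simultaneous normality'' statement --- on certain invariant subspaces of exterior powers. Upgrading normality to a genuine similitude structure requires an additional variational argument, most naturally by forcing any other equilibrium state of $\Phi$ to also be Bernoulli so that the analysis of Steps~1--3 applies uniformly. A secondary but still nontrivial point lies in Step~1: since $\Phi$ may admit multiple equilibrium states when $d>2$, one must apply the Bochi--Morris structure theorem with care in order to guarantee that the Gibbs inequality holds for the particular Bernoulli equilibrium state we have assumed to exist.
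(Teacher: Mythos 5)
Your proposal is correct and reproduces the paper's architecture almost exactly: decomposition of $\Phi$ into quasi-multiplicative potentials via exterior powers and the Bochi--Morris structure theorem, the Bernoulli/Gibbs argument forcing multiplicativity of a Jordan-projection functional, Benoist's non-arithmeticity theorem (the relevant input is the density of additivity defects $\lambda(\gamma_1\gamma_2)-\lambda(\gamma_1)-\lambda(\gamma_2)$ in $\mathfrak{a}_S$, i.e.\ \cite[Prop.~9.8]{bq.book}, which you cite, rather than the nonempty-interior property of the limit cone that you state in passing), and the two-layer descent in which simultaneous normality on minimal invariant subspaces of exterior powers is first established and then upgraded to genuine similitude via a variational argument equalising pressures of the various quasi-multiplicative pieces with the determinant potential. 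The paper also needs a nontrivial determinant identity $|\det \pi_{j,\ell}(g)|^{1/\dim V_\ell^j}=|\det g|^{k_j/d}$ proved via Clifford theory on the centre of $G^o$, but this is clearly within the scope of the ``delicate final descent'' you flag, so the plan is essentially the paper's.
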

We observe that the submultiplicativity of the above potential $\Phi$ follows from the inequality
\begin{equation}\label{eq:svsa}\prod_{i=1}^k \sigma_i(AB) \leq \prod_{i=1}^k \sigma_i(A) \cdot \prod_{i=1}^k \sigma_i(B)\end{equation}
which is valid for all linear maps $A,B \colon \mathbb{R}^d \to \mathbb{R}^d$ and all $k=1,\ldots,d$, since we may write 
\[\prod_{i=1}^d \sigma_i(A_\iii)^{\alpha_i} = \prod_{k=1}^d \left(\prod_{i=1}^k \sigma_i(A_\iii)\right)^{\alpha_k-\alpha_{k+1}}\]
where $\alpha_{d+1}:=0$. We will find it convenient to approach the inequality \eqref{eq:svsa} via norms on exterior powers of $\mathbb{R}^d$, but an elementary proof may be found in for example \cite[Theorem 3.3.4]{HoJo94}.

If $0<s<d$ with $d \geq 2$ then clearly the singular value potential $\Phi^s$ corresponds to the case $\alpha_1=\cdots=\alpha_{\lfloor s\rfloor}=1$, $\alpha_{\lceil s\rceil}=s-\lfloor s\rfloor$, $\alpha_{\lceil s \rceil+1}=\cdots=\alpha_d=0$ of the above theorem. In particular Theorem \ref{th:main-tech} implies that if $(A_1,\ldots,A_N) \in \GL_d(\mathbb{R})^N$ is irreducible, $0<s<d$ and the singular value potential has an equilibrium state which is Bernoulli, then the linear maps $|\det A_1|^{-1/d}A_1,\ldots,  |\det A_N|^{-1/d}A_N$ are all contained in a compact subgroup of $\GL_d(\mathbb{R})$. 
As was indicated in the introduction, in combination with various more-or-less standard results from the literature, Theorem \ref{th:main-tech} is sufficient to prove Theorem \ref{th:main}. The derivation of Theorem \ref{th:main} from Theorem \ref{th:main-tech} is presented in the following section, and Theorem \ref{th:main-tech} itself is proved in sections \ref{se:rev} to \ref{se:gen-case}.

%
%

\section{Proof of Theorem \ref{th:main} conditional on Theorem \ref{th:main-tech}}\label{se:proof-of-main}

We begin the process of proving Theorem \ref{th:main} by collecting various results from the literature concerning the Lyapunov dimension, the affinity dimension, the natural projection from $\Sigma_N$ to the attractor, and self-affine measures.

\subsection{The Lyapunov and affinity dimensions}

The following result demonstrates that the affinity dimension has the properties alluded to in the introduction and introduces its counterpart for measures, the Lyapunov dimension:
\begin{lemma}\label{le:cty}
Let $A_1,\ldots,A_N \in \GL_d(\mathbb{R})$ with $\max_i \trip{A_i}<1$ for some norm $\trip{\cdot}$ on $\mathbb{R}^d$, and for each $s \geq 0$ define a potential $\Phi^s$ by $\Phi^s(\iii):=\varphi^s(A_\iii)$. Then:
\begin{enumerate}[(i)]
\item
The function $s\mapsto P(\Phi^s)=P(A_1,\ldots,A_N;s)$ is a continuous strictly decreasing function $[0,+\infty) \to \mathbb{R}$ with a unique zero, and this zero is strictly positive.
\item
For every $\mu \in \mathcal{M}_\sigma$ the function $s\mapsto h(\mu)+\Lambda(\Phi^s,\mu)$ is a continuous strictly decreasing function $[0,+\infty) \to \mathbb{R}$ with a unique zero.
\end{enumerate}
We define the \emph{affinity dimension} of $(A_1,\ldots,A_N)$ to be the unique zero of $s\mapsto P(\Phi^s)$, and the \emph{Lyapunov dimension} of $\mu \in \mathcal{M}_\sigma$ relative to $(A_1,\ldots,A_N)$, denoted $\dimlyap (\mu;A_1,\ldots,A_N)$,  to be the unique zero of  $s\mapsto h(\mu)+\Lambda(\Phi^s,\mu)$. 
\end{lemma}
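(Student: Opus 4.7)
The plan is to reduce both parts to a uniform two-sided bound on the logarithmic singular values of the matrix cocycle, combined with the piecewise-linear structure of $s \mapsto \log \varphi^s(A)$.

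First I would set $\rho := \max_i \trip{A_i} \in (0,1)$ and $c_0 := \min_i |\det A_i| > 0$. Submultiplicativity of the operator norm yields $\trip{A_\iii} \leq \rho^{|\iii|}$, and equivalence of norms on $\Mat_d(\R)$ gives a constant $C_1 > 0$ with $\sigma_j(A_\iii) \leq \sigma_1(A_\iii) \leq C_1\rho^{|\iii|}$ for every $j$ and $\iii$. Combined with $|\det A_\iii| = \prod_j \sigma_j(A_\iii) \geq c_0^{|\iii|}$, this forces $\sigma_d(A_\iii) \geq c_0^{|\iii|}\, C_1^{-(d-1)}\, \rho^{-(d-1)|\iii|}$; together these two estimates furnish constants $0 < L < M$ (depending only on $(A_1, \ldots, A_N)$ and $\trip{\cdot}$) and $n_0 \geq 1$ such that for every $\iii$ with $|\iii| \geq n_0$ and every $j \in \{1, \ldots, d\}$,
\[ -M|\iii| \;\leq\; \log \sigma_j(A_\iii) \;\leq\; -L|\iii|. \]

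Next I would use the fact that $s \mapsto \log \varphi^s(A)$ is continuous and piecewise linear on $[0,+\infty)$, with slope $\log \sigma_k(A)$ on $(k-1,k)$ for $k \in \{1,\ldots,d\}$ and slope $\frac{1}{d}\log|\det A|$ on $(d,+\infty)$. The singular-value bounds above therefore give, for all $\iii$ with $|\iii| \geq n_0$ and all $0 \leq s \leq t$,
\[ -M|\iii|(t-s) \;\leq\; \log \varphi^t(A_\iii) - \log \varphi^s(A_\iii) \;\leq\; -L|\iii|(t-s). \]
Exponentiating, summing over $|\iii|=n$, taking $\frac{1}{n}\log$, and letting $n \to \infty$ propagates these inequalities to $-M(t-s) \leq P(\Phi^t)-P(\Phi^s) \leq -L(t-s)$; multiplying instead by $\mu([\iii])$, summing, dividing by $n$, and letting $n \to \infty$ propagates them to $-M(t-s) \leq \Lambda(\Phi^t,\mu)-\Lambda(\Phi^s,\mu) \leq -L(t-s)$. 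In both cases this yields Lipschitz continuity and strict monotonicity on $[0,+\infty)$.

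For (i), $P(\Phi^0) = \log N > 0$ and $P(\Phi^s) \to -\infty$ as $s \to \infty$, so the intermediate value theorem delivers a unique, strictly positive zero. For (ii), the map $s \mapsto h(\mu)+\Lambda(\Phi^s,\mu)$ inherits continuity and strict decrease, takes the value $h(\mu) \geq 0$ at $s=0$ (since $\varphi^0 \equiv 1$), and tends to $-\infty$ as $s \to \infty$, yielding a unique zero (equal to $0$ precisely when $h(\mu)=0$). The only genuinely non-routine ingredient is the lower slope bound for $\log\sigma_j(A_\iii)$, which leverages the invertibility of the matrices through the multiplicativity of the determinant; everything else reduces to Fekete's lemma and the intermediate value theorem.
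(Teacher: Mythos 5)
Your proof is correct and follows essentially the same route the paper sketches: derive uniform exponential bounds on all the singular values $\sigma_j(A_\iii)$, use the piecewise-linear structure of $s\mapsto\log\varphi^s(A_\iii)$ to convert these into two-sided Lipschitz-with-negative-slope estimates for $P(\Phi^s)$ and $\Lambda(\Phi^s,\mu)$, and finish with the intermediate value theorem. The only (cosmetic) difference is in the lower singular-value bound, which you obtain via $\sigma_d(A_\iii)\geq|\det A_\iii|/\sigma_1(A_\iii)^{d-1}$ while the paper uses the supermultiplicativity $\sigma_d(A_\iii)\geq\bigl(\min_i\sigma_d(A_i)\bigr)^{|\iii|}$ directly, which avoids the need to restrict to $|\iii|\geq n_0$.
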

The proof of the above lemma is a straightforward application of the inequalities
\[\varphi^{s_1}(A_\iii) \leq \left(C\trip{A_\iii}\right)^{s_1-s_2} \varphi^{s_2}(A_\iii) \leq C^{s_1-s_2}\left(\max_i \trip{A_i}\right)^{(s_1-s_2)|\iii|} \varphi^{s_2}(A_\iii)\]
and
\[\left(\min_i \sigma_d(A_i)\right)^{(s_1-s_2)|\iii|} \varphi^{s_2}(A_\iii) \leq \sigma_d(A_\iii)^{s_1-s_2}\varphi^{s_2}(A_\iii)\leq\varphi^{s_1}(A_\iii) \]
which are valid for all $\iii \in \Sigma_N^*$ and $s_1 \geq s_2 \geq 0$, where the constant $C>0$ depends only on $\trip{\cdot}$ and not on $\iii$, $s_1$ or $s_2$. The following relationship between Lyapunov dimension and affinity dimension was observed by A. K\"aenm\"aki \cite{Ka04}:
\begin{lemma}\label{le:high}
Let $A_1,\ldots,A_N \in \GL_d(\mathbb{R})$ with $\max_i \trip{A_i}<1$ for some norm $\trip{\cdot}$ on $\mathbb{R}^d$, and let $\mu \in \mathcal{M}_\sigma(\Sigma_N)$. Then $\dimlyap (\mu;A_1,\ldots,A_N) \leq \dimaff (A_1,\ldots,A_N)$, and equality holds if and only if $\mu$ is an equilibrium state of the potential $\Phi^s(\iii):=\varphi^s(A_\iii)$ where $s:=\dimaff(A_1,\ldots,A_N)$. 
\end{lemma}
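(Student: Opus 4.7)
The plan is to deduce Lemma \ref{le:high} directly from Proposition \ref{pr:varp} (the subadditive variational principle) combined with the monotonicity facts in Lemma \ref{le:cty}. Write $s^\ast := \dimaff(A_1,\ldots,A_N)$, so by definition $P(\Phi^{s^\ast}) = 0$, and write $t := \dimlyap(\mu; A_1,\ldots,A_N)$, so $h(\mu) + \Lambda(\Phi^t,\mu) = 0$ by Lemma \ref{le:cty}(ii).

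The first step is to apply Proposition \ref{pr:varp} to the potential $\Phi^{s^\ast}$. This yields
\[
h(\mu) + \Lambda(\Phi^{s^\ast},\mu) \;\leq\; \sup_{\nu \in \mathcal{M}_\sigma}\bigl[h(\nu) + \Lambda(\Phi^{s^\ast},\nu)\bigr] \;=\; P(\Phi^{s^\ast}) \;=\; 0.
\]
Next, I would invoke the strict decrease and continuity from Lemma \ref{le:cty}(ii) of the function $s \mapsto h(\mu) + \Lambda(\Phi^s,\mu)$, whose unique zero is $t$. Since this function vanishes at $t$ and is $\leq 0$ at $s^\ast$, strict monotonicity forces $t \leq s^\ast$, which is precisely the desired inequality $\dimlyap(\mu;A_1,\ldots,A_N) \leq \dimaff(A_1,\ldots,A_N)$.

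For the equality characterisation, observe that $t = s^\ast$ holds if and only if the single inequality $h(\mu) + \Lambda(\Phi^{s^\ast},\mu) \leq 0 = P(\Phi^{s^\ast})$ used above is in fact an equality (again using strict decrease, which prevents $h(\mu) + \Lambda(\Phi^{s^\ast},\mu)$ from being strictly negative while $t = s^\ast$). But by Proposition \ref{pr:varp}, attaining equality in the variational principle for $\Phi^{s^\ast}$ is exactly the defining property of an equilibrium state of $\Phi^{s^\ast}$. This yields the ``if and only if'' clause.

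The argument is essentially a two-line juxtaposition of the variational principle with the monotonicity established by Lemma \ref{le:cty}, so I do not expect any genuine obstacle; the only point requiring mild care is making sure that the strict monotonicity of $s\mapsto h(\mu)+\Lambda(\Phi^s,\mu)$ is invoked both to conclude $t\le s^\ast$ and to conclude that $t=s^\ast$ forces equality (and not merely $\le$) at $s^\ast$. Beyond that, no additional ingredients — no exact dimensionality, no thermodynamic machinery beyond Proposition \ref{pr:varp} — are needed.
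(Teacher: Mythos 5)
Your proof is correct and follows essentially the same route as the paper: both apply the subadditive variational principle (Proposition \ref{pr:varp}) to $\Phi^{s^\ast}$ with $s^\ast=\dimaff(A_1,\ldots,A_N)$ and then use the strict monotonicity and continuity from Lemma \ref{le:cty}(ii) to locate the zero of $s\mapsto h(\mu)+\Lambda(\Phi^s,\mu)$ relative to $s^\ast$. The paper phrases the comparison via the set containment $\{s : P(\Phi^s)<0\}\subseteq\{s : h(\mu)+\Lambda(\Phi^s,\mu)<0\}$ and the infimum characterisations of the two dimensions, whereas you evaluate directly at $s^\ast$; these are the same argument in slightly different clothing.
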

\begin{proof}
For each $s\geq 0$  we have $h(\mu)+\Lambda(\Phi^s,\mu) \leq P(\Phi^s)$ by the variational principle, Proposition \ref{pr:varp}. In particular if $P(\Phi^s)<0$ for some $s>0$ then $h(\mu)+\Lambda(\Phi^s,\mu)<0$. It follows that
\[\left\{s \geq 0 \colon P(\Phi^s)< 0\right\} \subseteq  \left\{s \geq 0 \colon h(\mu)+\Lambda(\Phi^s,\mu) < 0\right\}\]
and since using Lemma \ref{le:cty}
\[\dimlyap (\mu;A_1,\ldots,A_N) = \inf\left\{s \geq 0 \colon h(\mu)+\Lambda(\Phi^s,\mu) < 0\right\}\]
and
\[\dimaff (A_1,\ldots,A_N)= \inf \left\{s \geq 0 \colon P(\Phi^s) < 0\right\}\]
it follows that $\dimlyap (\mu;A_1,\ldots,A_N) \leq \dimaff (A_1,\ldots,A_N)$ as required. If these two quantities are equal to one another with common value $s_0$, say, then we must have $h(\mu)+\Lambda(\Phi^{s_0},\mu)=0$ and $P(\Phi^{s_0})=0$ by continuity in view of Lemma \ref{le:cty}, which implies that $\mu$ is an equilibrium state for the potential $\Phi^{s_0}$ as claimed. The converse is trivial.
\end{proof}

\subsection{The natural projection and the dimension of self-affine measures}
If $T_1,\ldots,T_N$ are affine transformations of $\mathbb{R}^d$ which are contractions with respect to some norm $\trip{\cdot}$ on $\mathbb{R}^d$ then for every $v \in \mathbb{R}^d$ and $x =(x_k)_{k=1}^\infty \in \Sigma_N$ the limit
\[\Pi(x):=\lim_{n \to \infty} T_{x_1}T_{x_2}\cdots T_{x_n}v\]
exists and is independent of the choice of $v \in \mathbb{R}^d$. Indeed, if $\varepsilon>0$ is chosen such that $\trip{T_iu-T_iv} \leq (1-\varepsilon)\trip{u-v}$ for all $u,v \in \mathbb{R}^d$, and $v_0 \in \mathbb{R}^d$ is arbitary, then for every $r\geq\varepsilon^{-1} \max_i \trip{v_0-T_iv_0}$ every map $T_i$ preserves and contracts $\overline{B_r(v_0)}$, the closed $r$-ball centred on $v_0$ with respect to the norm $\trip{\cdot}$. It follows easily that $\Pi(x)=\bigcap_{n=1}^\infty T_{x_1}\cdots T_{x_n}\overline{B_r(v_0)}$. We deduce also that the diameter of the set $\Pi([\iii])$ is bounded by a constant times $(1-\varepsilon)^{|\iii|}$ and it follows that $\Pi \colon \Sigma_N \to \mathbb{R}^d$ is continous. It is not difficult to see that $\Pi(\Sigma_N)$ is contained in the attractor of $(T_1,\ldots,T_N)$ since the initial point $v$ may be taken to be in the attractor. It is also not difficult to see that $\Pi(\Sigma_N)$ is precisely the attractor, although this fact will not be used. We call $\Pi$ the \emph{natural projection} associated to $(T_1,\ldots,T_N)$.

The following result relating Bernoulli measures to self-affine measures via the natural projection follows from a more general theorem of J. E. Hutchinson \cite[\S4]{Hu81}. Although Hutchinson's proof assumes the probability vector $(p_1,\ldots,p_N)$ to be nondegenerate, it is not difficult to check that this stipulation is unnecessary.
\begin{lemma}\label{le:hutch}
Let $T_1,\ldots,T_N \colon \mathbb{R}^d \to \mathbb{R}^d$ be affine transformations which are contractions with respect to some norm on $\mathbb{R}^d$, and let $(p_1,\ldots,p_N)$ be a probability vector. Then a Borel probability measure $m$ on $\mathbb{R}^d$ satisfies $\sum_{i=1}^Np_i (T_i)_*m = m$ if and only if it satisfies $m=\Pi_*\mu$ where $\mu$ is the Bernoulli measure on $\Sigma_N$ characterised by the property $\mu([\iii])=p_{i_1}\cdots p_{i_n}$ for all $\iii=(i_k)_{k=1}^n \in \Sigma_N^*$. 
\end{lemma}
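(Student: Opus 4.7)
The plan is to prove the two implications separately. For the ``if'' direction, the key observation is that the natural projection $\Pi$ semi-conjugates the prepending maps on $\Sigma_N$ to the affine maps on $\mathbb{R}^d$: writing $\sigma_i\colon\Sigma_N\to\Sigma_N$ for the map $x\mapsto(i,x_1,x_2,\dots)$, one checks directly from the limit definition of $\Pi$ and the continuity of each $T_i$ that $\Pi\circ\sigma_i = T_i\circ\Pi$. The Bernoulli measure $\mu$ associated to $(p_1,\dots,p_N)$ satisfies $\mu=\sum_{i=1}^N p_i(\sigma_i)_*\mu$, as is immediate by evaluating both sides on an arbitrary cylinder $[j_1\cdots j_n]$ (when $i=j_1$ the pullback gives $[j_2\cdots j_n]$, otherwise it is empty). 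Pushing forward by $\Pi$ yields $\Pi_*\mu = \sum_{i=1}^N p_i (T_i)_*\Pi_*\mu$, establishing this implication.

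For the converse direction, it suffices by the preceding paragraph to show that the functional equation $m=\sum_i p_i(T_i)_*m$ admits at most one Borel probability solution. First I would show that any such solution $m$ has compact support. Let $\trip{\cdot}$ be a norm on $\mathbb{R}^d$ with respect to which each $T_i$ is a $(1-\varepsilon)$-contraction; fix $v_0\in\mathbb{R}^d$ and set $M:=\max_i\trip{T_iv_0-v_0}$. Iterating the functional equation yields $m=\sum_{|\iii|=n}p_{\iii}(T_\iii)_*m$, and a telescoping estimate gives $\trip{T_\iii w-v_0}\leq(1-\varepsilon)^n\trip{w-v_0}+M/\varepsilon$ for every $\iii$ with $|\iii|=n$. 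Consequently, for $R>M/\varepsilon$ we have $T_\iii^{-1}(\mathbb{R}^d\setminus B_R(v_0))\subseteq \mathbb{R}^d\setminus B_{(R-M/\varepsilon)/(1-\varepsilon)^n}(v_0)$, and the iterated identity gives $m(\mathbb{R}^d\setminus B_R(v_0))\leq m(\mathbb{R}^d\setminus B_{(R-M/\varepsilon)/(1-\varepsilon)^n}(v_0))$. Letting $n\to\infty$ forces this probability to vanish, so $m$ is supported on $K:=\overline{B_r(v_0)}$ for any $r>M/\varepsilon$; by the remarks preceding the statement we may additionally enlarge $r$ so that $T_iK\subseteq K$ for every $i$.

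Finally, I would equip the set $\mathcal{P}(K)$ of Borel probability measures supported on $K$ with the Kantorovich--Wasserstein metric $W_1$ induced by $\trip{\cdot}$; this makes $\mathcal{P}(K)$ a complete metric space. The operator $Lm:=\sum_{i=1}^N p_i(T_i)_*m$ preserves $\mathcal{P}(K)$ and satisfies $W_1(Lm,Lm')\leq(1-\varepsilon)W_1(m,m')$: each pushforward $(T_i)_*$ contracts $W_1$ by factor $(1-\varepsilon)$ (couple an optimal plan through $T_i\times T_i$), and $W_1$ satisfies $W_1(\sum_i p_i\nu_i,\sum_i p_i\nu_i')\le\sum_i p_i W_1(\nu_i,\nu_i')$ with respect to the mixture structure. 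The Banach fixed point theorem then supplies a unique fixed point of $L$ in $\mathcal{P}(K)$, which by the first paragraph coincides with $\Pi_*\mu$. The only subtlety relative to Hutchinson's original nondegenerate version is accommodating vanishing $p_i$'s, but zero coefficients merely drop the corresponding terms from the sums without affecting the contraction factor $(1-\varepsilon)$, which depends only on the norm and not on the probability vector; hence no modification is required in this version.
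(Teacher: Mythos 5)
Your proof is correct. The paper itself does not supply an argument for this lemma --- it simply cites Hutchinson \cite[\S4]{Hu81} and notes that the degenerate case of the probability vector presents no difficulty --- so what you have written is a self-contained proof of a statement the authors left implicit. Your argument is in fact essentially Hutchinson's original one: the semi-conjugacy $\Pi\circ\sigma_i = T_i\circ\Pi$ identifies $\Pi_*\mu$ as a solution of $m=\sum_i p_i(T_i)_*m$, and uniqueness is obtained by the Banach fixed-point theorem in the Kantorovich--Wasserstein ($=$ Hutchinson/Monge--Kantorovich) metric on $\mathcal{P}(K)$, after showing each solution is supported on a compact $T_i$-invariant ball $K$. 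The one place the argument goes beyond a straight citation is precisely where the paper flags the gap: you correctly observe that zero entries of $(p_1,\dots,p_N)$ merely drop terms from the mixture and leave the Lipschitz constant $1-\varepsilon$ of $L$ unchanged, so the contraction estimate and hence uniqueness are unaffected. All the individual steps --- the telescoping bound $\trip{T_\iii w - v_0} \le (1-\varepsilon)^n\trip{w-v_0}+M/\varepsilon$, the deduction that $m$ is tight and in fact supported on $\overline{B_r(v_0)}$ for $r>M/\varepsilon$, the $(1-\varepsilon)$-contraction of each $(T_i)_*$ and the convexity of $W_1$ under mixtures --- check out.
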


Finally, the following result connects the Lyapunov dimension with the dimension of a measure:
\begin{lemma}\label{le:rossi-joposi}
Let $T_1,\ldots,T_N \colon \mathbb{R}^d \to \mathbb{R}^d$ be affine transformations which are contractions with respect to some norm on $\mathbb{R}^d$ and let $\mu \in \mathcal{M}_\sigma$. Write $T_ix=A_ix+v_i$ for all $x\in\mathbb{R}^d$ and $i=1,\ldots,N$. Then $\dim \Pi_*\mu \leq \dimlyap (\mu;A_1,\ldots,A_N)$.
\end{lemma}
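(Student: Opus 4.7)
The plan is to reduce to the case where $\mu$ is ergodic and then to give a direct ellipsoid covering argument showing $\dim \Pi_*\mu\leq s$ for any $s>\dimlyap(\mu;A_1,\ldots,A_N)$. For the reduction, write the ergodic decomposition $\mu=\int\mu_\omega\,dP(\omega)$. Since $h(\cdot)$ and $\Lambda(\Phi^s,\cdot)$ are both affine on $\mathcal{M}_\sigma$, at $s_0=\dimlyap(\mu;A_1,\ldots,A_N)$ the identity $h(\mu)+\Lambda(\Phi^{s_0},\mu)=0$ forces $h(\mu_\omega)+\Lambda(\Phi^{s_0},\mu_\omega)\leq 0$ on a $P$-positive set of $\omega$, on which $\dimlyap(\mu_\omega;A_1,\ldots,A_N)\leq s_0$. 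Combined with the standard fact that, for the decomposition $\Pi_*\mu=\int\Pi_*\mu_\omega\,dP(\omega)$, the Hausdorff dimension of $\Pi_*\mu$ (in the form $\dim m=\inf\{\dim_H E:m(E)>0\}$) is bounded by the essential infimum of $\omega\mapsto \dim\Pi_*\mu_\omega$, the lemma reduces to the ergodic case.

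Assume then that $\mu$ is ergodic and fix $s>\dimlyap(\mu;A_1,\ldots,A_N)$, so by Lemma \ref{le:cty} there is $\delta>0$ with $h(\mu)+\Lambda(\Phi^s,\mu)<-2\delta$. The Shannon--McMillan--Breiman theorem and Kingman's subadditive ergodic theorem applied to $\log\varphi^s(A_{x|_n})$ give, for $\mu$-a.e.\ $x\in\Sigma_N$,
\[
\lim_{n\to\infty}\tfrac{1}{n}\log\mu([x|_n])=-h(\mu),\qquad\lim_{n\to\infty}\tfrac{1}{n}\log\varphi^s(A_{x|_n})=\Lambda(\Phi^s,\mu).
\]
Consequently the sets $E_N:=\{x\in\Sigma_N:\mu([x|_n])\geq\varphi^s(A_{x|_n})e^{n\delta}\text{ for all }n\geq N\}$ satisfy $\mu(E_N)\uparrow 1$, and for every $n\geq N$,
\[
\sum_{|\iii|=n,\,[\iii]\cap E_N\neq\emptyset}\varphi^s(A_\iii)\;\leq\;e^{-n\delta}\sum_{|\iii|=n}\mu([\iii])\;\leq\;e^{-n\delta}.
\]

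The geometric input is that, since each $T_i$ is a contraction in the norm $\trip{\cdot}$, $\Pi([\iii])$ lies in the affine image $T_\iii(B)$ of a fixed bounded ball $B\supset\Pi(\Sigma_N)$. The set $T_\iii(B)$ is contained in an ellipsoid whose semi-axes are bounded by a constant times $\sigma_1(A_\iii),\ldots,\sigma_d(A_\iii)$, and such an ellipsoid admits a cover by at most $C\varphi^s(A_\iii)/\sigma_{\lceil s\rceil}(A_\iii)^s$ balls of diameter $\asymp\sigma_{\lceil s\rceil}(A_\iii)$. Consequently $\Pi([\iii])$ is covered by balls $B_{\iii,1},\ldots,B_{\iii,N_\iii}$ with $\sum_j(\diam B_{\iii,j})^s\leq C'\varphi^s(A_\iii)$. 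Combining this with the previous display, for every $n\geq N$ the collection $\{B_{\iii,j}:|\iii|=n,\,[\iii]\cap E_N\neq\emptyset\}$ covers $\Pi(E_N)$ with total $s$-content at most $C'e^{-n\delta}$. Since $\max_i\trip{A_i}<1$ forces $\max_{|\iii|=n}\sigma_1(A_\iii)\to 0$, the diameters of these balls tend to zero as $n\to\infty$, so $\mathcal{H}^s(\Pi(E_N))=0$ and $\dim_H\Pi(E_N)\leq s$. As $\Pi_*\mu(\Pi(E_N))\geq\mu(E_N)\to 1$, we conclude $\dim\Pi_*\mu\leq s$, and letting $s\downarrow\dimlyap(\mu;A_1,\ldots,A_N)$ completes the proof.

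The main technical obstacle is the sharp ellipsoid covering estimate $\#\{\text{covering balls}\}\cdot\sigma_{\lceil s\rceil}(A_\iii)^s\leq C\varphi^s(A_\iii)$, which matches the singular value potential exactly to the geometric scale and requires careful bookkeeping at non-integer $s$. The ergodic-decomposition reduction is also slightly delicate because the Lyapunov dimension is not itself an affine functional of $\mu$, even though the combination $\mu\mapsto h(\mu)+\Lambda(\Phi^s,\mu)$ is; this is exactly what allows the integral identity to produce an ergodic component with the required bound.
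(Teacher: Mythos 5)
Your ergodic-case covering argument is correct and is essentially a self-contained reconstruction of the standard Falconer-type estimate that underlies both references cited in the paper's one-line proof (the paper simply quotes Rossi \cite[Theorem 2.2]{Ro14}, noting that the same bound can be extracted from the proof of \cite[Theorem 4.3]{JoPoSi07}). That part of your proposal is sound: the Shannon--McMillan--Breiman and Kingman theorems produce the sets $E_N$, the cylinder sum bound $\sum_{|\iii|=n,\,[\iii]\cap E_N\neq\emptyset}\varphi^s(A_\iii)\le e^{-n\delta}$ is correct, and the ellipsoid covering with total $s$-content $\le C'\varphi^s(A_\iii)$ is the standard estimate valid for both integer and non-integer $s$.

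However, your reduction to the ergodic case contains a genuine gap. The fact you invoke, namely that for a disintegration $m=\int m_\omega\,dP(\omega)$ the Hausdorff dimension $\dim_{\mathsf H}m:=\inf\{\dim_{\mathsf H}E\colon m(E)>0\}$ is bounded by the $P$-essential infimum of $\omega\mapsto\dim_{\mathsf H}m_\omega$, is false in general: Lebesgue measure on $[0,1]$ is the disintegration $\int_0^1\delta_\omega\,d\omega$, where every piece has Hausdorff dimension $0$ but the measure itself has Hausdorff dimension $1$. Mutual singularity of the pieces does not rescue the claim, and in any case $\Pi$ need not preserve it. Since you only establish $h(\mu_\omega)+\Lambda(\Phi^{s_0},\mu_\omega)\le 0$ on a set of positive (not full) $P$-measure, you need a different bridge. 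One that works: run your covering argument directly on the non-ergodic $\mu$ using the non-ergodic versions of SMB and Kingman, so that $\frac1n\log\mu([x|_n])\to -\tilde h(x)$ and $\frac1n\log\varphi^s(A_{x|_n})\to\tilde\lambda_s(x)$ $\mu$-a.e.\ with $\int\tilde h\,d\mu=h(\mu)$, $\int\tilde\lambda_s\,d\mu=\Lambda(\Phi^s,\mu)$; for $s>\dimlyap(\mu;A_1,\ldots,A_N)$ the integral $\int(-\tilde h-\tilde\lambda_s)\,d\mu>0$, giving $\mu(E_N)>0$ for large $N$, whence $\Pi(E_N)$ is a set of positive $\Pi_*\mu$-measure with $\dim_{\mathsf H}\Pi(E_N)\le s$. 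Since the notation $\dim\Pi_*\mu$ in the statement presupposes $\Pi_*\mu$ is exact-dimensional, a single positive-measure set of small Hausdorff dimension does yield $\dim\Pi_*\mu\le s$, completing the argument without any essential-infimum claim.

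One further point worth flagging: even when repaired, your argument bounds the Hausdorff dimension $\inf\{\dim_{\mathsf H}E\colon\Pi_*\mu(E)>0\}$, whereas the paper's cited result of Rossi gives a pointwise bound on the upper local dimension $\limsup_{r\to 0}\log\Pi_*\mu(B_r(x))/\log r$ at $\mu$-a.e.\ point, hence a bound on the \emph{upper packing} dimension. The paper's remark after Theorem \ref{th:main} explicitly exploits this stronger conclusion to avoid invoking Feng's exact-dimensionality theorem for self-affine measures. For the lemma as literally stated (with $\dim\Pi_*\mu$ defined, hence $\Pi_*\mu$ exact-dimensional) the two bounds coincide, so your conclusion is correct; but your route gives strictly less information than the citation in the non-exact-dimensional setting.
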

\begin{proof}
It is shown in \cite[Theorem 2.2]{Ro14} in the more general context of a countably infinite family of transformations $(T_i)_{i=1}^\infty$ that
\[\limsup_{r \to 0} \frac{\log \Pi_*\mu(B_r(\Pi(y)))}{\log r}\leq \dimlyap(\mu;A_1,\ldots,A_N) \]
for $\mu$-a.e. $y \in \Sigma$, and this obviously implies
\[\limsup_{r \to 0} \frac{\log \Pi_*\mu(B_r(x))}{\log r} \leq \dimlyap(\mu;A_1,\ldots,A_N)\]
for $\Pi_*\mu$-a.e. $x \in \mathbb{R}^d$, which yields the result. The result may also be derived from the proof of \cite[Theorem 4.3]{JoPoSi07}.
\end{proof}

\subsection{Further continuity properties of the Lyapunov and affinity dimensions}

Let $\mathrm{Cont}(\GL_d(\mathbb{R})^N)$ denote the set of all tuples $(A_1,\ldots,A_N) \in \GL_d(\mathbb{R})$ with the property that $\max_{1 \leq i \leq N} \trip{A_i}<1$ for some norm $\trip{\cdot}$ on $\mathbb{R}^d$ depending on $(A_1,\ldots,A_N)$. This is clearly an open subset of $\GL_d(\mathbb{R})^N$. The following two results will be key in proving the local uniformity of the dimension gap in Theorem \ref{th:main}:
\begin{proposition}\label{pr:gamma}
Define a function $\gamma \colon \mathrm{Cont}(\GL_d(\mathbb{R})^N) \to \mathbb{R}$  by
\[\gamma(B_1,\ldots,B_N):=\sup\left\{\dimlyap (\mu;B_1,\ldots,B_N)  \colon \mu\text{ is a Bernoulli  measure on }\Sigma_N\right\}.\]
Then $\gamma$ is upper semi-continuous, and additionally for every tuple $(B_1,\ldots,B_N) \in  \mathrm{Cont}(\GL_d(\mathbb{R})^N)$ the supremum in the definition of $\gamma$ is attained.
\end{proposition}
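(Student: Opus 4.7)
The plan is to promote the function $g(p,B):=\dimlyap(\mu_p;B_1,\ldots,B_N)$, viewed on the product of the compact probability simplex $\Delta_N$ and the open set $\mathrm{Cont}(\GL_d(\mathbb{R})^N)$, to a jointly upper semi-continuous map, and then harvest both assertions of the proposition as routine consequences of the compactness of $\Delta_N$ together with the fact (already noted in the paper) that the parametrisation $p\mapsto\mu_p$ of Bernoulli measures is weak-$*$ continuous.

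First I would set
\[F(s,p,B):=h(\mu_p)+\Lambda(\Phi^s_B,\mu_p),\qquad \Phi^s_B(\iii):=\varphi^s(B_\iii),\]
and note that the entropy contribution is simply $h(\mu_p)=-\sum_i p_i\log p_i$, continuous on $\Delta_N$ with the convention $0\log 0=0$. The subadditivity identity recorded in Section \ref{se:tech-thm} identifies $\Lambda(\Phi^s_B,\mu_p)$ with the infimum over $n\geq 1$ of
\[F_n(s,p,B):=\frac{1}{n}\sum_{|\iii|=n}p_{i_1}\cdots p_{i_n}\log\varphi^s(B_\iii),\]
each of which is jointly continuous on $[0,+\infty)\times\Delta_N\times\mathrm{Cont}(\GL_d(\mathbb{R})^N)$, since $(s,A)\mapsto\varphi^s(A)$ is continuous on $[0,+\infty)\times\GL_d(\mathbb{R})$ (from its piecewise definition in terms of continuous singular values) and $B\mapsto B_\iii$ is polynomial. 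Because the infimum of a countable family of jointly continuous functions is jointly upper semi-continuous, $F$ has this property.

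Next I would invoke Lemma \ref{le:cty}: for each fixed $(p,B)$, the slice $s\mapsto F(s,p,B)$ is strictly decreasing with unique zero $g(p,B)$. To propagate upper semi-continuity from $F$ to $g$, suppose $(p_n,B_n)\to(p,B)$ and, after passing to a subsequence, $g(p_n,B_n)\to s_\infty$. Evaluating gives $F(g(p_n,B_n),p_n,B_n)=0$ for every $n$; joint upper semi-continuity of $F$ then forces $F(s_\infty,p,B)\geq 0$, and strict monotonicity of $F$ in $s$ yields $s_\infty\leq g(p,B)$. Thus $g$ is jointly upper semi-continuous on $\Delta_N\times\mathrm{Cont}(\GL_d(\mathbb{R})^N)$.

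Both conclusions of the proposition now fall out. For attainment, upper semi-continuity of the slice $g(\cdot,B)$ on the compact simplex $\Delta_N$ produces a maximiser $p^\ast$, and the corresponding Bernoulli measure realises the supremum defining $\gamma(B)$. For upper semi-continuity of $\gamma$, given $B_n\to B$ I would choose $p_n\in\Delta_N$ with $g(p_n,B_n)=\gamma(B_n)$; after extracting a subsequence with $p_n\to p^\ast\in\Delta_N$, the joint upper semi-continuity of $g$ gives
\[\limsup_{n\to\infty}\gamma(B_n)=\limsup_{n\to\infty}g(p_n,B_n)\leq g(p^\ast,B)\leq\gamma(B).\]
I expect the only mildly delicate point to be confirming joint continuity of the approximants $F_n$ at non-integer values of $s$ and along the boundary of the simplex where some $p_i$ vanishes; however, the piecewise formula for $\varphi^s$ and the convention $0\log 0=0$ render this entirely routine, so no serious obstacle is anticipated.
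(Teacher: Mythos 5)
Your proposal is correct and follows essentially the same route as the paper: both proofs rest on expressing $h(\mu)+\Lambda(\Phi^s,\mu)$ as an infimum of jointly continuous approximants (entropy plus the length-$m$ weighted sums of $\log\varphi^s$), using weak-$*$ compactness of the Bernoulli family, and then transferring upper semi-continuity to the Lyapunov dimension via strict monotonicity in $s$ (Lemma \ref{le:cty}). Your repackaging as joint upper semi-continuity of $F(s,p,B)$ on $[0,\infty)\times\Delta_N\times\mathrm{Cont}(\GL_d(\mathbb{R})^N)$ is merely a cleaner presentation of the same argument; the only implicit step in both versions is the local boundedness guaranteeing that the extracted subsequence of Lyapunov dimensions has a finite limit.
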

\begin{proof}
It is sufficient to prove the following statement: given a sequence of tuples $(A_1^{(n)},\ldots,A_N^{(n)}) \in  \mathrm{Cont}(\GL_d(\mathbb{R})^N)$ which converges to a limit $(A_1,\ldots,A_N) \in  \mathrm{Cont}(\GL_d(\mathbb{R})^N)$, there exists a Bernoulli measure $\mu$ on $\Sigma_N$ such that 
\begin{equation}\label{eq:gammagoal}\dimlyap(\mu;A_1,\ldots,A_N) \geq \limsup_{n \to \infty} \gamma(A_1^{(n)},\ldots,A_N^{(n)}).\end{equation}
Applying this result to a constant sequence of tuples $(A_1,\ldots,A_N)$ demonstrates that the supremum in the definition of $\gamma(A_1,\ldots,A_N)$ is attained; applying it to a nonconstant sequence directly implies that $\gamma$ is upper semi-continuous.

Let us prove this claim. For each $n \geq 1$ let $\mu_n$ be a Bernoulli measure such that
\[\dimlyap (\mu_n;A_1^{(n)},\ldots,A_N^{(n)}) > \gamma(A_1^{(n)},\ldots,A_N^{(n)})-\frac{1}{n}.\]
By passing to a subsequence if required, we may assume that the sequences of values $\gamma(A_1^{(n)},\ldots,A_N^{(n)})$ and $\dimlyap (\mu;A_1^{(n)},\ldots,A_N^{(n)})$ are convergent in $\mathbb{R}$, and similarly we may assume that $(\mu_n)$ converges to a limit $\mu$ in the weak-* topology. It is straightforward to verify that the set of Bernoulli measures on $\Sigma_N$ is closed in the weak-* topology and so the limit $\mu$ is necessarily Bernoulli. To prove  \eqref{eq:gammagoal} it is sufficient to prove that
\begin{equation}\label{eq:lyapgoal} \dimlyap(\mu;A_1,\ldots,A_N) \geq \lim_{n \to \infty}\dimlyap (\mu_n;A_1^{(n)},\ldots,A_N^{(n)}).\end{equation}
For each $n \geq 1$ and $s \geq 0$ define a potential $\Phi^{s,n} \colon \Sigma_N^* \to (0,+\infty)$ by $\Phi^{s,n}(\iii):=\varphi^s(A_\iii^{(n)})$, and define also $\Phi^s(\iii):=\varphi^s(A_\iii)$ for all $\iii \in \Sigma_N^*$. In the case where the limit $\lim_{n \to \infty} \dimlyap (\mu_n;A_1^{(n)},\ldots,A_N^{(n)})$ is zero the outcome \eqref{eq:lyapgoal} holds trivially, so we assume the limit to be strictly positive. In order to prove \eqref{eq:lyapgoal} it suffices to prove the following: for every positive real number $s<\lim_{n \to \infty} \dimlyap (\mu_n;A_1^{(n)},\ldots,A_N^{(n)})$ we have $h(\mu)+\Lambda(\Phi^s,\mu)\geq 0$.

Let us therefore fix $s<\lim_{n \to \infty} \dimlyap (\mu_n;A_1^{(n)},\ldots,A_N^{(n)})$. Let $n_0 \geq 1$ such that $\dimlyap(\mu_n;A_1^{(n)},\ldots,A_N^{(n)})>s$ for all $n \geq n_0$. For every $n \geq n_0$ we have $h(\mu_n)+\Lambda(\Phi^{s,n},\mu_n) \geq 0$ by the definition of the Lyapunov dimension. For each $n \geq 1$ we by definition have
\[h(\mu_n)=\inf_{m \geq 1} \frac{1}{m} \sum_{|\iii|=m} -\mu_n([\iii])\log\mu_n([\iii])=\lim_{m \to \infty} \frac{1}{m} \sum_{|\iii|=m} -\mu_n([\iii])\log\mu_n([\iii])\]
and
\[\Lambda(\Phi^{s,n},\mu_n) = \inf_{m \geq 1} \frac{1}{m}\sum_{|\iii|=m}\mu_n([\iii]) \Phi^{s,n}(\iii)=\lim_{m \to \infty} \frac{1}{m}\sum_{|\iii|=m}\mu_n([\iii]) \Phi^{s,n}(\iii),\]
so for each $n \geq n_0$ we have
\[\frac{1}{m}\sum_{|\iii|=m} -\mu_n([\iii])\log \mu_n([\iii]) +  \frac{1}{m}\sum_{|\iii|=m}\mu_n([\iii]) \Phi^{s,n}(\iii) \geq h(\mu_n)+\Lambda(\Phi^{s,n},\mu_n) \geq 0\]
for every $m \geq 1$. We have $\lim_{n \to \infty} \mu_n([\iii])=\mu([\iii])$ for every $\iii$ by weak-* convergence and $\lim_{n\to \infty} \Phi^{s,n}(\iii)=\Phi^s(\iii)$ for every $\iii$ by the $1$-Lipschitz continuity of the singular value functions $\sigma_k \colon \GL_d(\mathbb{R}) \to \mathbb{R}$. For fixed $m \geq 1$ it is thus clear that
\begin{eqnarray*}
{\lefteqn{\frac{1}{m}\sum_{|\iii|=m} -\mu([\iii])\log \mu([\iii]) +  \frac{1}{m}\sum_{|\iii|=m}\mu([\iii]) \Phi^{s}(\iii)}}& & \\
& & =\lim_{n \to \infty} \frac{1}{m}\sum_{|\iii|=m} -\mu_n([\iii])\log \mu_n([\iii]) +  \frac{1}{m}\sum_{|\iii|=m}\mu_n([\iii]) \Phi^{s,n}(\iii) \geq 0\end{eqnarray*}
and we deduce that
\[h(\mu)+\Lambda(\Phi^s,\mu) = \lim_{m \to \infty} \frac{1}{m}\sum_{|\iii|=m} -\mu([\iii])\log \mu([\iii]) +  \frac{1}{m}\sum_{|\iii|=m}\mu([\iii]) \Phi^{s}(\iii) \geq 0.\]
This demonstrates that $\dimlyap (\mu;A_1,\ldots,A_N) \geq s$ and the result follows.
\end{proof}
We also recall the following theorem of Feng and Shmerkin, which was originally proved in \cite{FeSh14} using thermodynamic formalism and the multiplicative ergodic theorem\footnote{The original result of Feng and Shmerkin works on the smaller space of tuples $(A_1,\ldots,A_N) \in \GL_d(\mathbb{R})^N$ such that $\max_i \|A_i\|<1$ for the Euclidean norm on $\mathbb{R}^d$. If we instead assume that $(A_1,\ldots,A_N) \in \mathrm{Cont}(\GL_d(\mathbb{R})^N)$ satisfies $\max_i \trip{A_i}<1$ for some norm $\trip{\cdot}$ on $\mathbb{R}^d$, then for some integer $n \geq 1$ and all $(B_1,\ldots,B_N)$ in a small neighbourhood of $(A_1,\ldots,A_N)$, the $N^n$-tuple $(B_1^n, B_1^{n-1}B_2,\ldots, B_N^{n-1}B_{N-1},B_N^n) \in \GL_d(\mathbb{R})^{N^n}$ is contracting in the Euclidean norm on $\mathbb{R}^d$ and has affinity dimension equal to $\dimaff(B_1,\ldots,B_N)$ by elementary consideration of the definition of the pressure function. In particular Feng and Shmerkin's result may be applied to these $N^n$-tuples in order to deduce the continuity of the affinity dimension with respect to $(B_1,\ldots,B_N)$ in the small neighbourhood.}. An alternative proof using linear algebra was given in \cite{Mo16}.
\begin{theorem}\label{th:feng-shmerkin}
The function $\dimaff \colon \mathrm{Cont}(\GL_d(\mathbb{R})^N) \to [0,+\infty)$ is continuous. 
\end{theorem}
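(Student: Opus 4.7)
The plan is to deduce continuity of $\dimaff$ from joint continuity of the pressure function $P:(A,s)\mapsto P(A_1,\ldots,A_N;s)$ on $\mathrm{Cont}(\GL_d(\mathbb{R})^N)\times[0,+\infty)$. The deduction is soft: Lemma~\ref{le:cty} and the elementary inequalities in its proof show that $s\mapsto P(A;s)$ is strictly decreasing with upper and lower slope bounds that are locally uniform in the tuple, and a jointly continuous family of strictly monotone functions with uniformly bounded slopes has continuously varying zeros. The footnote device further reduces matters to tuples with $\max_i\|A_i\|<1$ in the Euclidean norm, since passing to the $N^n$-tuple of length-$n$ compositions multiplies the pressure by $n$ without affecting continuity.

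Upper semicontinuity of $P$ is immediate from Fekete's lemma: the subadditivity of $a_n(A,s):=\log\sum_{|\iii|=n}\varphi^s(A_\iii)$ gives
\[
P(A;s)=\inf_{n\geq 1}\frac{1}{n}a_n(A,s),
\]
and each $a_n$ is continuous in $(A,s)$, since singular values depend continuously on matrix entries and $\varphi^s$ is continuous in $(s,\sigma_1,\ldots,\sigma_d)$ on the positive orthant. An infimum of continuous functions is upper semicontinuous.

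The main obstacle is lower semicontinuity. A direct application of the variational principle (Proposition~\ref{pr:varp}) fails: for fixed $\mu$, the map $A\mapsto \Lambda(\Phi^s,\mu)$ is itself an infimum of continuous functions and hence only upper semicontinuous --- the wrong direction. My plan is to approximate nearly-optimal ergodic measures by block-Bernoulli measures. Given $(A,s)$ and $\varepsilon>0$, fix an ergodic $\mu$ with $h(\mu)+\Lambda(\Phi^s,\mu)>P(A;s)-\varepsilon$. For large $n$, let $W_n\subset\{1,\ldots,N\}^n$ be the set of $n$-words that are simultaneously $\mu$-typical in the sense of Shannon--McMillan--Breiman and of Oseledets for the cocycle $(A_1,\ldots,A_N)$; then $|W_n|\geq e^{n(h(\mu)-\varepsilon)}$ and every $\jjj\in W_n$ satisfies $\log\varphi^s(A_\jjj)\geq n(\Lambda(\Phi^s,\mu)-\varepsilon)$. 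Let $\tilde\mu_n$ be the shift-invariant measure obtained by treating $W_n$ as a block alphabet with uniform probabilities, i.e.\ the push-forward of the Bernoulli measure on $W_n^{\mathbb{N}}$ under block concatenation. One verifies that $h(\tilde\mu_n)$ and $\Lambda(\Phi^s,\tilde\mu_n)$ approximate $h(\mu)$ and $\Lambda(\Phi^s,\mu)$ respectively, and --- crucially --- that for fixed $n$ the quantity $\Lambda(\Phi^{s_k},\tilde\mu_n)$ evaluated at a nearby tuple $(A^{(k)},s_k)$ is controlled by the finite continuous family $(B,t)\mapsto \log\varphi^t(B_\jjj)$, $\jjj\in W_n$, and hence converges to $\Lambda(\Phi^s,\tilde\mu_n)$ as $(A^{(k)},s_k)\to(A,s)$. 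Feeding $\tilde\mu_n$ into the variational principle at the perturbed tuple and then letting $k\to\infty$, $n\to\infty$ and $\varepsilon\to 0$ in that order yields $\liminf_k P(A^{(k)};s_k)\geq P(A;s)$, completing the argument.

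The real technical difficulty is making the match between $\Lambda(\Phi^s,\tilde\mu_n)$ and the cocycle statistics of $\mu$ quantitative enough to survive the three nested limits; in particular, establishing the matching lower bound on $\Lambda(\Phi^s,\tilde\mu_n)$ requires the Oseledets filtration to be well-behaved along concatenations of $W_n$-words, which is delicate in the non-irreducible setting. The alternative linear-algebraic proof of \cite{Mo16} sidesteps this issue by rewriting the singular value potential as a combination of operator norms on exterior powers and reducing the continuity of $P$ to continuity of spectral radii of certain associated matrix products, thereby bypassing ergodic theory entirely.
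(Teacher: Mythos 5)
The paper does not prove this theorem itself; it cites \cite{FeSh14} (thermodynamic/Oseledets proof) and \cite{Mo16} (linear-algebraic proof), and supplies only the footnote reducing the case of an arbitrary contracting norm to the Euclidean one. Your outline is in the same spirit as \cite{FeSh14}, and the soft parts are sound: the reduction to joint continuity of $(A,s)\mapsto P(A;s)$, the reduction to Euclidean-contracting tuples via the footnote device, and upper semicontinuity of $P$ as an infimum of the continuous functions $\frac1n\log\sum_{|\iii|=n}\varphi^s(A_\iii)$.

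The gap is at the step you yourself flag as the ``real technical difficulty,'' and it is not a detail but the entire content of the hard direction. You assert that for fixed $n$ the Lyapunov exponent $\Lambda(\Phi^{s_k},\tilde\mu_n)$ at a nearby tuple ``is controlled by the finite continuous family $(B,t)\mapsto\log\varphi^t(B_\jjj)$, $\jjj\in W_n$.'' This is false: since $\tilde\mu_n$ is Bernoulli on block concatenations, its Lyapunov exponent is the asymptotic growth of $\varphi^s(A_{\jjj_1}\cdots A_{\jjj_m})$, and $\varphi^s$ is only submultiplicative, so there is no reason the block-level data $\{\log\varphi^s(A_\jjj):\jjj\in W_n\}$ determines or even lower-bounds $\Lambda(\Phi^s,\tilde\mu_n)$. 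Taking $W_n$ to consist of simultaneously SMB- and Oseledets-typical words is not enough: two Oseledets-typical blocks can concatenate to a product with much smaller $\varphi^s$ if their singular flag data are transverse (e.g.\ $\|AB\|$ can be as small as $\sigma_d(A)\|B\|$). The \cite{FeSh14} argument has to select blocks whose singular directions are aligned, giving an almost-supermultiplicative sub-system along which $\Lambda$ is computable from finitely many continuous quantities; that alignment argument (and its stability under perturbation of the tuple) is precisely what is missing here. Both the lower bound $\Lambda(\Phi^s,\tilde\mu_n;A)\gtrsim\Lambda(\Phi^s,\mu;A)$ and the lower semicontinuity $\liminf_k\Lambda(\Phi^{s_k},\tilde\mu_n;A^{(k)})\geq\Lambda(\Phi^s,\tilde\mu_n;A)$ rest on it. As you note, the proof in \cite{Mo16} via norms on exterior powers avoids this entirely; if you wish to bypass the ergodic subtleties, developing that route is the safer option.
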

We also require the following algebraic lemma. Although it can be deduced from the structure theory of reductive groups, we provide a brief elementary proof.
\begin{lemma}\label{le:irred.bdd}
Let $\mathsf{A}$ be an irreducible subset of $\GL_d(\mathbb{R})$. Suppose that for every $A$ in the semigroup generated by $\mathsf{A}$, the eigenvalues of $A$ all have absolute value $|\det A|^{1/d}$. Then $\{|\det A|^{-1/d}A\colon A \in \mathsf{A}\}$ is contained in a compact subgroup of $\GL_d(\mathbb{R})$.
\end{lemma}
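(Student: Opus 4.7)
The plan is to normalise and then apply a Burnside-flavoured argument via the trace form on the linear span of the semigroup. First I would replace each $A \in \mathsf{A}$ by $\tilde A := |\det A|^{-1/d}A$ and let $\tilde S$ denote the semigroup generated by $\{\tilde A : A \in \mathsf{A}\}$. Every $B \in \tilde S$ satisfies $|\det B|=1$ and, by hypothesis, has all its complex eigenvalues on the unit circle; consequently, since $B^n \in \tilde S$ for every $n \geq 1$, we obtain the uniform trace bound $|\tr(B^n)| \leq d$.

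Let $\mathcal{A}$ denote the $\R$-linear span of $\tilde S$ inside $M_d(\R)$. Because $\tilde S$ is multiplicatively closed, $\mathcal{A}$ is a subalgebra of $M_d(\R)$, and applying the Cayley--Hamilton theorem to any $B \in \tilde S$ (whose characteristic polynomial has nonzero constant term $\pm\det B$) shows $I \in \mathcal{A}$. Irreducibility of $\mathsf{A}$ transfers directly to $\mathcal{A}$. The key algebraic step will be to show that the radical
\[ J := \{X \in \mathcal{A} : \tr(XY) = 0 \text{ for all } Y \in \mathcal{A}\} \]
of the trace form is zero. By the cyclic property of the trace, $J$ is a two-sided ideal of $\mathcal{A}$. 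For $X \in J$, taking $Y = X^{k-1} \in \mathcal{A}$ (with $X^0 := I$) yields $\tr(X^k) = 0$ for every $k \geq 1$; Newton's identities then force every eigenvalue of $X$ to vanish, so $X$ is nilpotent. Hence $J$ is a nil two-sided ideal of a finite-dimensional algebra, and therefore nilpotent as an ideal, with $J^m = \{0\}$ for some $m$. The descending chain $V \supseteq JV \supseteq J^2V \supseteq \cdots \supseteq J^mV = \{0\}$ of $\mathcal{A}$-invariant subspaces of $V := \R^d$, combined with the irreducibility of $\mathcal{A}$ on $V$, forces $JV = \{0\}$ (otherwise $JV = V$ would iterate to $V = J^m V = \{0\}$); since $\mathcal{A}$ acts faithfully on $V$, this gives $J = 0$.

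With the trace form non-degenerate on $\mathcal{A}$, I would pick a basis $B_1, \ldots, B_r$ of $\mathcal{A}$ drawn from $\tilde S$. The linear map $\phi \colon \mathcal{A} \to \R^r$ defined by $\phi(B) := (\tr(BB_i))_{i=1}^r$ is injective by non-degeneracy, hence an $\R$-linear isomorphism onto its image. For every $B \in \tilde S$ and every $i$ the product $BB_i$ lies in $\tilde S$, so $|\tr(BB_i)| \leq d$; thus $\phi(\tilde S) \subseteq [-d, d]^r$, and $\tilde S$ is bounded in $M_d(\R)$. The closure $\overline{\tilde S}$ is then a compact subset of $\{B \in M_d(\R) : |\det B|=1\} \subset \GL_d(\R)$ and is a compact subsemigroup of $\GL_d(\R)$. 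The conclusion follows from the standard fact that any compact subsemigroup of a topological group is automatically a subgroup: for $g \in \overline{\tilde S}$, a convergent subsequence $g^{n_k} \to h$ with $n_{k+1} - n_k \to \infty$ gives $g^{n_{k+1}-n_k} \to e$ in $\GL_d(\R)$, and then $g^{n_{k+1}-n_k-1} \to g^{-1}$, placing $g^{-1}$ in $\overline{\tilde S}$. Hence $\overline{\tilde S}$ is the desired compact subgroup of $\GL_d(\R)$ containing every $\tilde A$.

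The main technical obstacle is the non-degeneracy step: it is the only place where irreducibility is used essentially, and without it $\mathcal{A}$ could harbour a non-trivial nil ideal, rendering the trace form degenerate and collapsing the boundedness argument. The remaining ingredients are elementary linear algebra and the standard compact-semigroup-is-group fact.
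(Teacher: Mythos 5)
Your proof is correct, and it takes a genuinely different route from the paper's. The paper establishes boundedness of the normalised semigroup $\tilde S$ by a direct contradiction involving nilpotents: it forms the closed cone $\overline{\mathbb{R}\cdot\tilde S}$ in $\End(\mathbb{R}^d)$, observes that every element of this cone is either invertible or nilpotent and that unboundedness of $\tilde S$ would force a nonzero nilpotent $B$ to lie in it, reduces to $B^2=0$ by choosing $B$ of minimal rank, and then uses irreducibility to produce $A\in\tilde S$ for which $(AB)^2\neq 0$, yielding a nilpotent of strictly smaller nonzero rank and hence a contradiction. Your argument is a Burnside/Wedderburn-type one: you pass to the unital subalgebra $\mathcal{A}=\sspan\tilde S$ (with $I\in\mathcal{A}$ via Cayley--Hamilton), show that the radical $J$ of the trace form is a nilpotent two-sided ideal which irreducibility forces to vanish, and then use the uniform bound $|\tr(BB_i)|\le d$ together with a trace-form basis $B_1,\dots,B_r$ drawn from $\tilde S$ to bound $\tilde S$ inside $\mathcal{A}$. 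Both proofs invoke irreducibility at the decisive step — the paper's to escape the $B^2=0$ configuration, yours to eliminate the nil ideal $J$ — and both close with the same standard fact that a compact subsemigroup of $\GL_d(\mathbb{R})$ is a subgroup. The paper's approach is more elementary and self-contained; yours is slicker if the reader is comfortable with semisimple-algebra machinery, and as a by-product it records the extra structural fact that $\mathcal{A}$ is a semisimple algebra.
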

\begin{proof}
Consider the semigroup $\Gamma$ generated by the set $\{|\det A|^{-1/d}A \colon A\in \mathsf{A}\}$, which is clearly irreducible. We claim that $\Gamma$ is bounded. To see this consider the closed subsemigroup $\overline{\mathbb{R}.\Gamma}:=\overline{\{\beta A \colon A \in \Gamma\text{ and }\beta \in \mathbb{R}\}}$ of the algebra of linear endomorphisms of $\mathbb{R}^d$. It is clear that for every $A \in \overline{\mathbb{R}.\Gamma}$ the eigenvalues of $A$ are also all of absolute value $|\det A|^{1/d}$, so in particular every element of $\overline{\mathbb{R}.\Gamma}$ is either invertible or nilpotent. It is easily seen that $\overline{\mathbb{R}.\Gamma}$ admits a nonzero nilpotent element if and only if $\Gamma$ is unbounded, so to prove the claim we will show that the only nilpotent element of $\overline{\mathbb{R}.\Gamma}$ is zero.

For a contradiction let $r$ be the minimal rank of a nilpotent nonzero element of $\overline{\mathbb{R}.\Gamma}$ and note that $0<r<d$. Fix a nilpotent element $B$ with rank $r$. Since $\rank (B^2)<\rank B$ by nilpotency we have $\rank (B^2)=0$ by minimality of $r$ so that $B^2=0$. The equation $B^2=0$ implies that the image $B\mathbb{R}^d$ is a subspace of the kernel of $B$. Since $\Gamma$ is irreducible, the nonzero $\Gamma$-invariant subspace $\sspan \{ABv \colon v \in \mathbb{R}^d\text{ and }A\in \Gamma\}$ must equal $\mathbb{R}^d$, so in particular there exists $A \in \Gamma$ such that $AB\mathbb{R}^d \not\subset \ker B$. The linear map $AB \in \overline{\mathbb{R}.\Gamma}$ has kernel equal to $\ker B$ since $A$ is invertible, it has rank precisely $r$, and it is nilpotent since every element of $\overline{\mathbb{R}.\Gamma}$ which is not invertible is nilpotent. But we have $(AB)^2 \neq 0$ because the image of $AB$ is not a subset of $\ker B = \ker AB$. This implies that $0<\rank AB<r$ which contradicts the minimality of $r$. We conclude that $\overline{\mathbb{R}.\Gamma}$ contains no nonzero nilpotents and therefore $\Gamma$ must be bounded as claimed.
 
To complete the proof it is sufficient to observe that the closure $\overline{\Gamma}$ is a group. Clearly this closure is a compact subsemigroup of $\GL_d(\mathbb{R})$. To see that it is a group it suffices to show that every $A \in \overline{\Gamma}$ satisfies $A^{-1} \in \GL_d(\mathbb{R})$, which may be achieved as follows. Given $A \in \overline{\Gamma}$ choose $(n_k)_{k=1}^\infty$ such that $\lim_{k \to \infty} A^{n_k}$ exists and $n_{k+1}\geq 2+n_k$ for all $k \geq 1$; it is clear that $\lim_{k \to \infty} A^{n_{k+1}-n_k-1}=A^{-1} \in \overline{\Gamma}$ as required.
\end{proof}

The final ingredient which we require for the proof of Theorem \ref{th:main} is the following: 
\begin{proposition}\label{pr:openness}
The set of all $(A_1,\ldots,A_N) \in \GL_d(\mathbb{R})^N$ satisfying hypotheses (i)--(iv) of Theorem \ref{th:main} is open.
\end{proposition}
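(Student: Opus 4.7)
The plan is to show that each of the four conditions (i)--(iv) defines an open subset (possibly in conjunction with the others), so that their intersection is open. Conditions (i), (ii) and (iii) should follow by standard continuity and compactness arguments; the substantive step is (iv), which I would handle using Lemma \ref{le:irred.bdd} together with continuity of eigenvalues.

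Openness of (i) is immediate, since if $\trip{A_i}<1$ for every $i$ with respect to some norm $\trip{\cdot}$, then the same norm witnesses $\trip{B_i}<1$ on a small neighbourhood of $(A_1,\ldots,A_N)$. Condition (ii) is then open by Theorem \ref{th:feng-shmerkin}: $\dimaff$ is continuous on $\mathrm{Cont}(\GL_d(\mathbb{R})^N)$, and the preimage of the open interval $(0,d)$ is open. For condition (iii), I would fix $k\in\{1,\ldots,d-1\}$ and observe that
\[
\bigl\{(A_1,\ldots,A_N, V)\in \GL_d(\mathbb{R})^N\times \mathrm{Gr}(k,d) : A_iV\subseteq V\text{ for every }i\bigr\}
\]
is closed in the product; compactness of $\mathrm{Gr}(k,d)$ then makes the projection to $\GL_d(\mathbb{R})^N$ a closed map, so the set $R_k$ of tuples preserving a common $k$-plane is closed. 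The finite union $\bigcup_{k=1}^{d-1}R_k$ is the closed set of reducible tuples, whose complement---the irreducible tuples---is open.

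The main step is (iv), which I would treat under the supplementary assumption that (iii) holds; this is legitimate because (iii) is already known to be open. Given such an $(A_1,\ldots,A_N)$, the set $\{|\det A_i|^{-1/d}A_i\}_{i=1}^N$ is still irreducible (invariant subspaces are insensitive to scalar rescaling) and, by (iv), is not contained in any compact subgroup of $\GL_d(\mathbb{R})$. The contrapositive of Lemma \ref{le:irred.bdd} therefore produces a word $\iii=i_1\cdots i_n\in\Sigma_N^*$ such that $A_\iii$ has at least one eigenvalue whose modulus differs from $|\det A_\iii|^{1/d}$. Now the subset of $\GL_d(\mathbb{R})$ on which every eigenvalue of $M$ has modulus exactly $|\det M|^{1/d}$ is closed, by continuity of the unordered spectrum and of the determinant, so its open complement contains $A_\iii$ together with a neighbourhood. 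For $(B_1,\ldots,B_N)$ sufficiently close to $(A_1,\ldots,A_N)$, the element $B_\iii$ still lies in this open set, and hence
\[
|\det B_\iii|^{-1/d}B_\iii \;=\; \prod_{k=1}^{n}\bigl(|\det B_{i_k}|^{-1/d}B_{i_k}\bigr)
\]
has an eigenvalue off the unit circle. Since every compact subgroup of $\GL_d(\mathbb{R})$ is conjugate into $O(d)$, its elements have all eigenvalues on the unit circle; consequently $\{|\det B_j|^{-1/d}B_j\}_{j=1}^N$ is not contained in any compact subgroup, so (iv) persists. Intersecting the four neighbourhoods gives the result. The only subtle point is the argument for (iv), which is where I expect any real difficulty, but Lemma \ref{le:irred.bdd} already packages the algebraic content, leaving only routine continuity of eigenvalues and determinants to verify.
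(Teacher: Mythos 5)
Your proposal is correct, and it reaches the same conclusion as the paper's proof with the same key ingredients (Theorem \ref{th:feng-shmerkin}, Lemma \ref{le:irred.bdd}, continuity of eigenvalues and determinants), but it packages two of the steps in genuinely different ways.

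For condition (iii), you use the Grassmannian $\mathrm{Gr}(k,d)$: the incidence variety of pairs $((A_1,\ldots,A_N),V)$ with $A_iV\subseteq V$ is closed, and compactness of $\mathrm{Gr}(k,d)$ makes its projection to $\GL_d(\mathbb{R})^N$ closed, so the reducible locus $\bigcup_{k=1}^{d-1}R_k$ is closed. The paper instead argues directly: $(A_1,\ldots,A_N)$ is reducible iff there exist unit vectors $u,v$ with $\langle A_\iii u,v\rangle=0$ for every $\iii$, and then passes to the limit along convergent subsequences of $(u_n,v_n)$ on the compact sphere. These are two implementations of the same compactness argument; yours is cleaner as a piece of general nonsense, the paper's is more elementary and self-contained.

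For condition (iv), you argue locally: starting from an irreducible tuple satisfying (iv), the contrapositive of Lemma \ref{le:irred.bdd} produces a single word $\iii$ witnessing an eigenvalue of $A_\iii$ of modulus $\neq|\det A_\iii|^{1/d}$, and continuity extends this to a neighbourhood; then you use that every compact subgroup of $\GL_d(\mathbb{R})$ is conjugate into $O(d)$ to conclude that (iv) persists. The paper instead establishes the full global equivalence, for irreducible tuples, that (iv) fails iff every $A_\iii$ has all eigenvalues of modulus $|\det A_\iii|^{1/d}$, using Gelfand's formula applied to $(|\det A_\iii|^{-n/d}A_\iii^n)_{n\in\mathbb{Z}}$ for one direction and Lemma \ref{le:irred.bdd} for the other, and then observes that the latter condition defines a closed set. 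Your local phrasing and your use of the orthogonalizability of compact subgroups in place of Gelfand's formula are both correct; these are minor differences of presentation rather than of substance.
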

\begin{proof}

It is obvious that if $(A_1,\ldots,A_N)$ satisfies $\max_i \trip{A_i}<1$ for some norm $\trip{\cdot}$ on $\mathbb{R}^d$ then so does every tuple $(B_1,\ldots,B_N)$ sufficiently close to $(A_1,\ldots,A_N)$. Similarly the set of all $(A_1,\ldots,A_N)$ satisfying (i) such that $0<\dimaff(A_1,\ldots,A_N)<d$ is open as a consequence of Theorem \ref{th:feng-shmerkin}. 

We claim that the set of all irreducible tuples $(A_1,\ldots,A_N) \in \GL_d(\mathbb{R})^N$ is open. To see this we observe that $(A_1,\ldots,A_N)$ is \emph{not} irreducible if and only if there exist unit vectors $u,v \in \mathbb{R}^d$ such that $\langle A_\iii u,v\rangle=0$ for all $\iii \in \Sigma_N^*$. Indeed, if such vectors exist then $\mathrm{span}\{A_\iii u \colon \iii \in \Sigma_N^*\}$ is an invariant subspace for $A_1,\ldots,A_N$ which is clearly not the zero subspace and is clearly a proper subspace since it does not contain $v$. On the other hand if an invariant subpace $U$ exists for $A_1,\ldots,A_N$ then we may choose arbitrary unit vectors $u \in U$ and $v \in U^\perp$ and see that the preceding condition is satisfied. Now observe that if for each $n$ the tuple $(A_1^{(n)},\ldots,A_N^{(n)})$ and unit vectors $u_n$ and $v_n$ satisfy $\langle A_\iii^{(n)} u_n,v_n\rangle=0$ for all $\iii \in \Sigma_N^*$, and $(A_1,\ldots,A_N)=\lim_{n \to \infty} (A_1^{n)},\ldots,A_N^{(n)})$, then any accumulation point $(u,v)$ of the sequence $(u_n,v_n)$ satisfies $\langle A_\iii u,v\rangle=0$ for all $\iii \in \Sigma_N^*$. Thus the set of all tuples $(A_1,\ldots,A_N) \in \GL_d(\mathbb{R})$ which are \emph{not} irreducible is closed.

As was discussed immediately subsequent to the statement of Theorem \ref{th:main}, the tuple $(A_1,\ldots,A_N)$ satisfies (iv) if and only if the linear maps $|\det A_i|^{-1/d}A_i$ are all contained in some compact subgroup of $\GL_d(\mathbb{R})$. We claim that if $(A_1,\ldots,A_N) \in \GL_d(\mathbb{R})$ is irreducible, then the linear maps $|\det A_i|^{-1/d}A_i$ are all contained in a compact subgroup of $\GL_d(\mathbb{R})$ if and only if for every $\iii \in \Sigma_N^*$, every eigenvalue of $A_\iii$ has absolute value equal to $|\det A_\iii|^{1/d}$. Indeed, if the first statement holds then every product $A_\iii$ has the property that the sequence $(|\det A_\iii|^{-n/d}A_\iii^n)_{n \in \mathbb{Z}}$ is bounded. Applying Gelfand's formula as $n \to +\infty$ it follows that $\rho(|\det A_\iii|^{-1/d}A_\iii)= 1$ and applying Gelfand's formula as $n\to-\infty$ we obtain $\rho(|\det A_\iii^{-1}|^{-1/d}A_\iii^{-1})= 1$. (Here and throughout this article $\rho(B)$ denotes the spectral radius of the linear map $B$.) These two identities together imply that every eigenvalue of $|\det A_\iii|^{-1/d}A_\iii$ has modulus $1$ and the second statement follows. The converse implication is given by Lemma \ref{le:irred.bdd}. We conclude that for an irreducible tuple $(A_1,\ldots,A_N)$, (iv) is equivalent to the statement that for every $\iii \in \Sigma_N^*$, every eigenvalue of $A_\iii$ has absolute value equal to $|\det A_\iii|^{1/d}$.

To complete the proof of the proposition we observe that a tuple $(A_1,\ldots,A_N) \in \GL_d(\mathbb{R})^N$ satisfies both (iii) and (iv) if and only if it belongs to the set of irreducible tuples (which is open) and avoids the set of tuples with the property that for every $\iii \in \Sigma_N^*$, every eigenvalue of $A_\iii$ has absolute value equal to $|\det A_\iii|^{1/d}$. The latter set is obviously closed. The result follows.
\end{proof}

\subsection{Proof of Theorem \ref{th:main}}
It is now a straightforward task to prove the main theorem. Proposition \ref{pr:openness} shows that if $(A_1,\ldots,A_N)$ satisfies hypotheses (i)--(iv) of Theorem \ref{th:main}, and $\mathsf{K}$ is a sufficiently small compact neighbourhood of $(A_1,\ldots,A_N)$, then every element of $\mathsf{K}$ satisfies (i)--(iv).

Fix a  compact subset $\mathsf{K}$ of $\GL_d(\mathbb{R})^N$ such that every $(A_1,\ldots,A_N) \in \mathsf{K}$ satisfies hypotheses (i)--(iv) of Theorem \ref{th:main}. By Lemma \ref{le:high} we have $\gamma(A_1,\ldots,A_N)-\dimaff(A_1,\ldots,A_N) \leq 0$ for all $(A_1,\ldots,A_N) \in \mathsf{K}$, and by the combination of Proposition \ref{pr:gamma} and Theorem \ref{th:feng-shmerkin} the function $(A_1,\ldots,A_N) \mapsto \gamma(A_1,\ldots,A_N) - \dimaff(A_1,\ldots,A_N)$ is upper semi-continuous. In particular its supremum is attained somewhere on $\mathsf{K}$, and is non-positive.

Suppose first that this supremum is equal to some negative number $-\kappa<0$. If $(A_1,\ldots,A_N) \in \mathsf{K}$, and $T_1,\ldots,T_N \colon \mathbb{R}^d \to \mathbb{R}^d$ are affine maps for which there exist $v_1,\ldots,v_N \in \mathbb{R}^d$ such that $T_ix=A_ix+v_i$ for all $x \in \mathbb{R}^d$, and $m$ is a self-affine measure with respect to $T_1,\ldots,T_N$, then by Lemma \ref{le:hutch} we have $m=\Pi_*\mu$ for some Bernoulli measure $\mu$ on $\Sigma_N$. Using Lemma \ref{le:rossi-joposi} it follows that 
\begin{align*}\dim m = \dim \Pi_*\mu &\leq \dimlyap (\mu;A_1,\ldots,A_N)\\
& \leq \gamma(A_1,\ldots,A_N) \leq \dimaff(A_1,\ldots,A_N)-\kappa\end{align*}
and we have established the conclusion of Theorem \ref{th:main}. To prove Theorem \ref{th:main} it therefore suffices to show that the supremum
\[\sup \left\{\gamma(A_1,\ldots,A_N) -\dimaff(A_1,\ldots,A_N)\colon (A_1,\ldots,A_N) \in \mathsf{K}\right\}\]
cannot be zero. If this supremum is zero then by the upper semi-continuity of $\gamma$, the continuity of $\dimaff$ and the compactness of $\mathsf{K}$ it must be the case that $\gamma(A_1,\ldots,A_N)=\dimaff(A_1,\ldots,A_N)$ for some $(A_1,\ldots,A_N) \in \mathsf{K}$. By Proposition \ref{pr:gamma} we have $\dimlyap (\mu;A_1,\ldots,A_N)=\gamma(A_1,\ldots,A_N)=\dimaff(A_1,\ldots,A_N)$ for some Bernoulli measure $\mu$ on $\Sigma_N$. By Lemma \ref{le:high} this implies that $\mu$ is an equilibrium state of the potential $\Phi(\iii):=\varphi^s(A_\iii)$ where $s:=\dimaff(A_1,\ldots,A_N) \in (0,d)$. By Theorem \ref{th:main-tech} the linear maps $|\det A_i|^{-1/d}A_i$ are all contained in a compact subgroup of $\GL_d(\mathbb{R})$, but as discussed subsequently to the statement of Theorem \ref{th:main}, this contradicts (iv). The proof of Theorem \ref{th:main} is complete.

%
%

\section{Review of linear algebraic groups}\label{se:rev}

\subsection{Reductive linear algebraic groups}

Here we include a brief overview of some aspects of reductive linear algebraic groups that will be useful in the proofs of the main results. Our principal reason of interest for this class of groups is that they arise as the Zariski closures of semigroups in $\GL_d(\mathbb{R})$ that act irreducibly on $\mathbb{R}^d$ (see below). For a more detailed exposition of the theory of reductive linear algebraic groups, we refer the reader to \cite{bq.book,borel.book,borel-tits,chevalley,knapp}. 

\subsubsection{Definition and relation to irreducible semigroups}\label{subsub.reductive.irred}
A linear Lie subgroup $G$ of $\GL_d(\mathbb{R})$ is said to be reductive if it has no non-trivial normal subgroup consisting of unipotent matrices. A connected reductive linear real Lie group $G$ is also a linear algebraic group in the sense that it is the connected component of identity $\mathbb{G}(\mathbb{R})^o$ of the group of real points $\mathbb{G}(\mathbb{R})$ of a (reductive) linear algebraic group $\mathbb{G}$ defined over $\mathbb{R}$. The linear algebraic group $\mathbb{G}$ admits a faithful rational representation $\mathbb{G} \to \GL_d$. In particular it can be seen as the set of zeros of polynomials in $\mathbb{R}[x_{ij},\det x^{-1}]$, where $x_{ij}$'s are the entries in $\Mat(d,\mathbb{R})$. Consequently, we can speak of the Zariski topology on $G$: a subset of $G$ is said to be Zariski closed if it is the set of common zeros of a set of polynomial maps. This defines the Zariski topology; the notions of Zariski closure and Zariski density are defined in the obvious way. The usual Hausdorff (analytic) topology on $G$ is finer than the Zariski topology. In the sequel, we shall speak of a real reductive group to mean a reductive linear real Lie group with finitely many connected components, and unless otherwise specified, topological notions refer to the analytic topology.

We will often work with semigroups in $\GL_d(\mathbb{R})$. We recall the elementary fact that the Zariski closure of a semigroup $\Gamma$ in $G$ is a (Zariski-closed) group, call it $H$. In particular, the Zariski closure of the group generated by $\Gamma$ is also $H$.

Before proceeding further, let us clarify the aforementioned relationship between irreducible, or rather completely reducible, families and real reductive groups. Recall that a semigroup $\Gamma$ in $\GL_d(\mathbb{R})$ is said to act completely reducibly if $\mathbb{R}^d$ decomposes into a direct sum $V_1 \oplus \ldots \oplus V_k$ of $\Gamma$-invariant subspaces $V_i$, on which $\Gamma$ acts irreducibly. It is equivalent to require that every $\Gamma$-invariant subspace has a $\Gamma$-invariant complement. Clearly, if $\Gamma$ acts irreducibly on $\mathbb{R}^d$, then it acts completely reducibly.

The action on $\mathbb{R}^d$ of a real reductive group $G<\GL_d(\mathbb{R})$ is completely reducible (see \cite[Ch.4]{chevalley}). Conversely,  let $\Gamma$ be a semigroup of $\GL_d(\mathbb{R})$ that acts completely reducibly on $\mathbb{R}^d$. Let $G$ be the Zariski closure of $\Gamma$. We claim that $G$ is a real reductive group. Indeed, being algebraic, $G$ has finitely many connected components. If it is not real reductive, then it contains a non-trivial normal subgroup $N$ consisting of unipotent matrices. Let $V_1$ be a $G$-irreducible subspace on which $N$ acts non-trivially.  By a classical result of Kolchin, the subspace $V_0$ of fixed vectors of $N$ in $V_1$ is a non-trivial proper subspace of $V_1$. Since $N$ is normal in $G$, $V_0$ is invariant under $G$, contradicting irreducibility of the $G$-action on $V_1$.

\subsubsection{Cartan space and roots}\label{subsub.cartanspace}

Let $A<G$ be a maximal connected real split torus so that it is a closed
Lie subgroup of $G$ that is isomorphic to $(\mathbb{R}^\ast_+)^d$ for some $d \in \mathbb{N}$. The integer $d$ is called the (real) rank of $G$. Let $Z(G)$ denote the center of $G$. The integer $d_S:=d-\dim Z(G)$ is called the semisimple rank of $G$. The Lie algebra $\mathfrak{a}$ of $A$ writes as $\mathfrak{a}=\mathfrak{a}_Z \oplus \mathfrak{a}_S$, where $\mathfrak{a}_Z$ is the Lie algebra of $A \cap Z(G)$ and $\mathfrak{a}_S$ is the Lie algebra of $A \cap [G,G]$. Here $[G,G]$ denotes the closed commutator subgroup of $G$, which is a semisimple Lie group.

Let $\mathfrak{g}$ be the Lie algebra of $G$ and let $\Ad: G \to \GL(\mathfrak{g})$ be the adjoint representation of $G$. A non-trivial character $\alpha:A \to \mathbb{R}^\ast_+$ is said to be a root of $G$ if it is a weight of $A$ for the $\Ad$-representation, i.e. the subspace $\mathfrak{g}_\alpha:=\{v \in \mathfrak{g}\, | \, \Ad(a)v=\alpha(a)v \; \forall a \in A\}$ is non-trivial. Given a character $\alpha$ of $A$, we denote by $\overline{\alpha}$ the element of $\mathfrak{a}^\ast$ satisfying $\exp(\overline{\alpha}(x))=\alpha(\exp(x))$ for every $x \in \mathfrak{a}$.  The set of non-zero $\overline{\alpha}$'s appearing in this form from the $\Ad$-representation forms a root system that we denote by $\Sigma$. Let $\{\overline{\alpha}_1,\ldots,\overline{\alpha}_{d_S}\}$ be a choice of simple roots so that $\Sigma$ splits into a disjoint union of positive roots $\Sigma_+$ (those elements of $\Sigma$ that can be written as a non-negative integer linear combination of $\overline{\alpha}_i$'s) and negative roots $-\Sigma_+$. 

We denote by $\mathfrak{a}^+$ the choice of a Weyl chamber in $\mathfrak{a}$ corresponding to a choice of simple roots: $x \in \mathfrak{a}$ belongs to $\mathfrak{a}^+$ if and only if for every $\overline{\alpha} \in \Sigma_+$, $\overline{\alpha}(x) \geq 0$. It is a closed fundamental domain for the action of the Weyl group $N_G(A)/Z_G(A)$, where $N_G(A)$ is the normalizer of $A$ in $G$ and $Z_G(A)$ is the centralizer of $A$ in $G$. The Weyl chamber $\mathfrak{a}^+$ is the direct sum of a salient cone $\mathfrak{a}^+ \cap \mathfrak{a}_S$ and the subspace $\mathfrak{a}_Z$.

An example of a real reductive group is $G=\GL_d(\mathbb{R})$ itself. In this case, the maximal real split torus $A$ can be taken to be diagonal matrices with positive coefficients. Its Lie algebra $\mathfrak{a}$ is the commutative Lie algebra of $d \times d$ diagonal matrices. The rank of $G$ is equal to $d$. The commutator $[G,G]=\SL(d,\mathbb{R})$ so that $\mathfrak{a}_S$ is the diagonal matrices whose coefficients sum to $0$. In particular, the semisimple rank of $G$ is $d-1$. The (log) roots are the linear forms $\overline{\alpha}_{i,j}$ with $i \neq j \in \{1,\ldots,d\}$ such that $\alpha_{i,j}(a)=\frac{a_i}{a_j}$ where $a_i$'s are the diagonal entries of $a$. A base of simple roots is given by $\overline{\alpha}_{i,i+1}$. The corresponding choice of Weyl chamber $\mathfrak{a}^+$ is the diagonal matrices with decreasing coefficients. The Weyl group is isomorphic to the symmetric group $S_d$ acting on $A$ by permuting the diagonal coefficients.

\subsubsection{Cartan and Jordan projections}\label{subsub.Cartan.Jordan}
Let $G$ be a real reductive group and let $K$ be a maximal compact subgroup of $G$ whose Lie algebra is orthogonal to $\mathfrak{a}$ for the Killing form. The Cartan decomposition of $G$ says that we have $G=KAK$. Here, given an element $g \in G$, its factor in the Cartan decomposition corresponding to the group $A$ is, up to the action of the Weyl group, uniquely determined. In particular for each $g \in G$ there exists a unique element $a_g \in A^+:=\exp(\mathfrak{a}^+)$ such that $g \in Ka_g K$. Accordingly we define the Cartan projection 
$$
\kappa:G \to \mathfrak{a}^+
$$ 
by setting $\kappa(g):=a_g$.

Every element $g \in G$ can also be decomposed as a commuting product $g=g_e g_h g_u$, where $g_e$ is an elliptic element (i.e.\ belonging to a compact group), $g_u$ is a unipotent element (i.e.\  $\Ad(g_u)$ is a unipotent linear transformation, where $\Ad:G \to \GL(\mathfrak{g})$ is the adjoint representation) and $g_h$ is a hyperbolic element (i.e.\ it is conjugate to an element of $A$). The hyperbolic part $g_h$ is uniquely determined and this allows us to define the Jordan projection
$$
\lambda:G \to \mathfrak{a}^+
$$ 
setting $\lambda(g)$ to be the logarithm of the unique element of $A^+$ conjugate to $g_h$. 

When $G=\GL_d(\mathbb{R})$, with the above choice of $A$, the maximal compact group $K$ can be taken to be the orthogonal group $O(d,\mathbb{R})$ and the Cartan decomposition is the polar decomposition: for $g \in \GL_d(\mathbb{R})$ its Cartan projection reads $\kappa(g)=(\log \sigma_1(g),\ldots, \log \sigma_d(g))$. The factorisation $g=g_e g_h g_u$ corresponds to Jordan block form and the Jordan projection $\lambda(g)$ reads $\lambda(g)=(\log |\lambda_1(g)|,\ldots,|\lambda_d(g)|)$.

\subsubsection{Representations and highest weights}\label{subsub.rep}

Let $G$ be a connected real reductive group and let $A<G$ and $\Sigma$ be as above. Let $U$ be a maximal unipotent subgroup of $G$ normalised by $A$ and whose Lie algebra is generated by the root spaces $(\mathfrak{g}_\alpha)_{\overline{\alpha} \in \Sigma_+}$. Let $V$ be a finite dimensional real vector space and $(\pi,V)$ an algebraic representation of $G$. An (algebraic) character $\chi$ of $A$ is said to be a restricted weight of $G$ in $(\pi,V)$ if the vector space $V^\chi:=\{v \in V\, | \, \pi(a)v=\chi(a)v \; \forall a \in A\}$ is non-trivial. Such a weight $\chi$ is said to be a parabolic weight if it is a weight of $A$ in the space $V^U:=\{v\in V \, |\, Uv=v\}$. It is said to be a dominant weight if it belongs to the Weyl chamber $\mathfrak{a}^+$ after the identification of $\mathfrak{a}$ with $\mathfrak{a}^\ast$ with an inner product on $\mathfrak{a}$ extending the restriction of the Killing form on $\mathfrak{a}_S$ and for which $\mathfrak{a}_S$ and $\mathfrak{a}_Z$ are orthogonal.

The choice of positive roots induces a partial order on the set of characters of $A$: we let $\chi_1 \leq \chi_2$ if and only if $\chi_2-\chi_1$ is a non-negative linear combination of positive, or equivalently simple, roots. An irreducible algebraic representation $(\pi,V)$ of $G$ admits a unique parabolic weight that we shall denote $\chi_V$. This is also the largest weight for the order induced by the choice of $\mathfrak{a}^+$ and this dominant weight is called \textit{the highest weight}. 

We will use the following fact that serves as a bridge between the geometry of $G$ and its representations. For its proof, see e.g. \cite[Lemma 8.17]{bq.book}
\begin{lemma}\label{lemma.weight.vs.eigenvalue}
Let $G$ be a connected real reductive group, $(\pi,V)$ be an irreducible linear representation of $G$ and $\chi$ be the highest weight. Then, for every $g\in G$, we have 
$$
\log |\lambda_1(\pi(g))|=\overline{\chi}(\lambda(g)).
$$
\end{lemma}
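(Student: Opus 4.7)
The plan is to reduce the statement to a computation on the split torus $A$, and then invoke the defining property of the highest weight on the dominant Weyl chamber. I will proceed in three steps.

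First, I would use the Jordan decomposition $g=g_e g_h g_u$ with the three factors commuting. Because $\pi\colon G\to\GL(V)$ is an algebraic representation, it carries the (group) Jordan decomposition to the multiplicative Jordan decomposition in $\GL(V)$: that is, $\pi(g_e)$ lies in a compact subgroup of $\GL(V)$ (hence its spectrum lies on the unit circle), $\pi(g_u)$ is unipotent (hence has spectrum $\{1\}$), and these two commute with $\pi(g_h)$. Choosing a common basis adapted to the simultaneous generalised eigenspaces of the three commuting operators, one sees immediately that $|\lambda_1(\pi(g))|=|\lambda_1(\pi(g_h))|$. Moreover, since $g_h$ is $G$-conjugate to $\exp(\lambda(g))\in A^+$ and $\pi$ is a homomorphism, $\pi(g_h)$ is $\pi(G)$-conjugate to $\pi(\exp(\lambda(g)))$, so these have the same spectral radius. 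Thus the problem reduces to showing
\[
\log|\lambda_1(\pi(\exp X))|=\overline{\chi}(X)\qquad\text{for every }X\in\mathfrak{a}^+.
\]

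Second, I would use the weight-space decomposition of $V$ under $A$. Since $A$ is a real split torus, its image $\pi(A)$ consists of simultaneously diagonalisable operators on $V$, and $V$ decomposes as an orthogonal sum of weight spaces $V=\bigoplus_\nu V^\nu$, where $\nu$ ranges over the (finite) set of restricted weights of $A$ in $(\pi,V)$. Differentiating and exponentiating, the eigenvalues of $\pi(\exp X)$ for $X\in\mathfrak{a}$ are precisely the numbers $\exp(\overline{\nu}(X))$, with multiplicities $\dim V^\nu$. Consequently
\[
\log|\lambda_1(\pi(\exp X))|=\max_{\nu}\overline{\nu}(X).
\]

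Third, I would invoke the definition of the highest weight. By the properties recalled in \S\ref{subsub.rep}, for every weight $\nu$ of $(\pi,V)$ the difference $\overline{\chi}-\overline{\nu}$ is a non-negative integer combination of the simple positive roots $\overline{\alpha}_1,\dots,\overline{\alpha}_{d_S}$. By the very definition of the dominant Weyl chamber $\mathfrak{a}^+$, each simple root is non-negative on $\mathfrak{a}^+$, so $(\overline{\chi}-\overline{\nu})(X)\geq 0$ for every $X\in\mathfrak{a}^+$. Therefore $\max_\nu\overline{\nu}(X)=\overline{\chi}(X)$ for $X\in\mathfrak{a}^+$, and combining this with the two reductions above yields the claim.

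The only step that is not essentially a definition chase is the first one, where one must invoke compatibility of the abstract group Jordan decomposition with algebraic representations (so that $\pi(g_e),\pi(g_h),\pi(g_u)$ are respectively elliptic, hyperbolic, and unipotent in $\GL(V)$); this is however standard for algebraic representations of real reductive groups, so no genuine obstacle arises. The rest is a direct computation on $A$.
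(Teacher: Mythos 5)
The paper does not prove this lemma itself; it quotes it directly from Benoist--Quint \cite[Lemma 8.17]{bq.book}. Your three-step reduction — transfer of the Jordan decomposition through the algebraic representation to reduce to $g_h$ and then to $\exp(\lambda(g))\in A^+$, diagonalisation of $\pi(\exp X)$ via the restricted weight-space decomposition, and dominance of the highest weight on $\mathfrak{a}^+$ via positivity of simple roots there — is precisely the standard argument underlying that citation and is correct.
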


\subsubsection{A density result of Benoist}
In his study of asymptotic properties of linear groups and their actions on homogeneous spaces, Benoist \cite{benoist.linear2} (see also \cite{benoist.proper.actions}) introduced a notion of limit cone of a semigroup: given a semigroup $\Gamma$ in a real reductive group $G$, this is the smallest closed cone in $\mathfrak{a}^+$ containing all Jordan projections $\lambda(\gamma)$ of elements $\gamma \in \Gamma$. He proved in particular that the intersection of an affine translate of this cone with $\mathfrak{a}_S$ has non-empty interior in $\mathfrak{a}_S$ whenever $\Gamma$ is Zariski dense in $G$. The following density result of Benoist \cite{benoist.linear2}, later proven in a more elementary fashion by Quint \cite{quint.schottky}, is a refinement of the aforementioned property of this limit cone. In the proof of our main result, it will be instrumental in deducing the compactness of the image of $[G,G]$ under certain linear representations. 

We state a version of this result that is adapted to our purposes (see \cite[Proposition 9.8]{bq.book}): 
\begin{theorem}[\cite{benoist.linear2,quint.schottky,bq.book}]\label{thm.benoist.density}
Let $G$ be a connected real reductive group and $\Gamma<G$ a Zariski dense semigroup. The closed subgroup of $\mathfrak{a}$ spanned by the elements $\lambda(\gamma_1 \gamma_2)-\lambda(\gamma_1)-\lambda(\gamma_2)$ for $\gamma_1, \gamma_2 \in \Gamma$ is $\mathfrak{a}_S$.
\end{theorem}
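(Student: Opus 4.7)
The inclusion of the subgroup in $\mathfrak{a}_S$ is the easy direction. Any algebraic character $\chi\colon G\to\mathbb{R}^\ast_+$ factors through the abelianization $G/[G,G]$, so its differential $\overline{\chi}\in\mathfrak{a}^\ast$ vanishes on $\mathfrak{a}_S$; moreover $\chi(g)=\exp(\overline{\chi}(\lambda(g)))$ because the elliptic and unipotent parts in the Jordan decomposition contribute trivially to $\chi$. The multiplicativity of $\chi$ then gives $\overline{\chi}\bigl(\lambda(\gamma_1\gamma_2)-\lambda(\gamma_1)-\lambda(\gamma_2)\bigr)=0$ for every $\gamma_1,\gamma_2\in\Gamma$, and since $\mathfrak{a}_S$ is the common kernel of these $\overline{\chi}$'s, the closed subgroup $H$ generated by the differences is contained in $\mathfrak{a}_S$.

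For the reverse inclusion, the plan is to exhibit enough differences to force $H$ to contain a non-empty open subset of $\mathfrak{a}_S$; since $H$ is a closed subgroup of the connected group $\mathfrak{a}_S$ this will automatically give $H=\mathfrak{a}_S$. The key algebraic input is the existence, inside every Zariski-dense sub-semigroup of a real reductive group $G$, of a free Schottky sub-semigroup $\Gamma_0=\langle g_1,\ldots,g_r\rangle^+$ generated by \emph{$\mathbb{R}$-regular} (loxodromic) elements whose Jordan projections lie in the interior of the Weyl chamber $\mathfrak{a}^+$ and whose attracting and repelling flags $F^+(g_i),F^-(g_j)\in G/P\times G/P^-$ sit in general position.

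A standard ping-pong computation on the flag variety then yields, for any two generators $g_i,g_j$ and large $n$,
\[
\lambda(g_i^n g_j^n)=n\lambda(g_i)+n\lambda(g_j)+\varepsilon\bigl(F^+(g_i),F^-(g_j)\bigr)+o(1),
\]
where $\varepsilon\colon G/P\times G/P^-\to\mathfrak{a}_S$ is a \emph{continuous} crossing function depending only on the flag data (the target being $\mathfrak{a}_S$ follows from the first paragraph applied to each $n$). Letting $n\to\infty$ places $\varepsilon(F^+(g_i),F^-(g_j))$ in $H$ for every pair $(i,j)$. By exploiting Zariski density of $\Gamma$, one shows that the flag pairs attainable by varying the generators (e.g.\ by further conjugating them by elements of $\Gamma$) are Zariski-dense in $G/P\times G/P^-$; combined with the non-degeneracy of $\varepsilon$ (its differential is onto $\mathfrak{a}_S$ at generic flag pairs, as may be read off from the root-system structure), this forces the image of $\varepsilon$ to contain a non-empty open subset of $\mathfrak{a}_S$. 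Hence $H\supset$ an open set in $\mathfrak{a}_S$, and so $H=\mathfrak{a}_S$.

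The main obstacle is the construction and control of the Schottky sub-semigroup, together with the verification that the crossing function $\varepsilon$ is locally surjective onto $\mathfrak{a}_S$. Producing $\mathbb{R}$-regular elements whose attracting and repelling flags can be perturbed through $\Gamma$-conjugation in a sufficiently rich way rests on Goldsheid--Margulis-type proximality arguments extended to arbitrary reductive $G$, while the openness of $\varepsilon$ is a non-degeneracy statement about root systems and the geometry of $G/P\times G/P^-$. Both ingredients form the technical core of Benoist's \cite{benoist.linear2} and Quint's \cite{quint.schottky} work and are what makes \cite[Proposition~9.8]{bq.book} non-trivial.
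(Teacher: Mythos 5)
The paper does not prove Theorem~\ref{thm.benoist.density} — it is imported with a citation to Benoist, Quint and the Benoist--Quint monograph — so there is no internal proof to compare your outline against, and I will assess it on its own terms. Your easy direction, $H\subseteq\mathfrak a_S$, is correct: every algebraic character $\chi\colon G\to\mathbb R^\ast_+$ kills $[G,G]$ (hence its differential kills $\mathfrak a_S$), $\chi$ is trivial on the elliptic and unipotent Jordan factors so $\chi(g)=\exp\bigl(\overline\chi(\lambda(g))\bigr)$, and $\mathfrak a_S$ is exactly the common kernel of those $\overline\chi$ in $\mathfrak a^\ast$.

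The reverse inclusion has a genuine gap at the concluding step. You place each crossing value $\varepsilon(F^+(g_i),F^-(g_j))$ in $H$ (correct, since $H$ is closed and $\lambda(g_i^ng_j^n)-n\lambda(g_i)-n\lambda(g_j)\in H$), note that the attainable flag pairs are Zariski dense, and then claim that this plus local surjectivity of $\varepsilon$ "forces the image of $\varepsilon$ to contain a non-empty open subset of $\mathfrak a_S$." That implication is false: the set $\mathbb Q\times\mathbb R$ is Zariski dense in $\mathbb R^2$ and the first coordinate projection is a submersion everywhere, yet the image of $\mathbb Q\times\mathbb R$ under it is $\mathbb Q$, which is not open. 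Zariski density of the domain plus a submersion controls the image set-theoretically not at all. This is precisely where the real substance of the theorem lives: one needs either a genuinely continuous (rather than Zariski-dense but countable/discrete) family of attainable flag pairs fed into $\varepsilon$, or a separate argument ruling out that $H$ sits in a proper closed subgroup $V\oplus\Lambda\subsetneq\mathfrak a_S$ — the second option is what the irrationality/non-arithmeticity arguments of Benoist and Quint actually carry out. As written, your sketch does not close this gap and so does not yield a proof; the obstacle you flag at the end ("local surjectivity of $\varepsilon$") is necessary but, as the counterexample shows, nowhere near sufficient.
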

We remark that in the work of Quint \cite{quint.schottky}, Benoist's non-arithmeticity result was also applied in a symbolic-dynamical context.

%
%

\section{The case of irreducible representations}\label{se:irr-case}

\subsection{Overview}
We may now commence working in earnest on the proof of Theorem \ref{th:main-tech}. We will study the potential $\Phi(\iii):=\prod_{i=1}^d \sigma_i(A_\iii)^{\alpha_i}$  by rewriting it in the form $\Phi(\iii)=\prod_{j=1}^d \|A_\iii^{\wedge j}\|^{\alpha_j-\alpha_{j+1}}$, where $\alpha_{d+1}:=0$. Since by hypothesis the semigroup $\Gamma:=\{A_\iii \colon \iii \in \Sigma_N^*\}$ acts irreducibly on $\mathbb{R}^d$, it follows from the discussion at the beginning of \S\ref{se:rev} that the Zariski closure of $\Gamma$ in $\GL_d(\mathbb{R})$ is a real reductive group $G$. We are thus in the following situation: we have a finite set of elements $g_1,\ldots,g_N$ of a real reductive group $G$ which generate a Zariski dense subsemigroup of $G$, a finite collection of representations $\pi_j$ from $G$ to $\GL(\wedge^j\mathbb{R}^d)$, a collection of non-negative real numbers $\beta_j$, and a potential $\Phi$ of the form $\Phi(\iii)=\prod_j \|\pi_j(g_\iii)\|^{\beta_j}$, where $g_\iii:=g_{i_1}\cdots g_{i_n}$ for $\iii=(i_t)_{t=1}^n$. (Since those indices $j$ for which $\beta_j=0$ have no effect on the value of $\Phi(\iii)$, we discard those indices. The condition $\alpha_1>\alpha_d$ implies that at least one $j<d$ is retained.) We wish to show that if $\Phi$ has an equilibrium state which is a Bernoulli measure, then $G$ must be a group of similitudes. Equivalently, we wish to show that the group $\{|\det g|^{-1/d}g \colon g \in G\}$ must be compact.

In the full generality of Theorem \ref{th:main-tech} we have no reason to believe that the representations $\pi_j$ are irreducible, which significantly complicates the argument. These representations are however completely reducible as a consequence of the reductiveness of the group $G$. We will therefore  first prove a version of Theorem \ref{th:main-tech} in the case of irreducible representations $\pi_j$, and then obtain the theorem in the general case by presenting the problem as a family of sub-cases each of which corresponds to a choice of a family of irreducible subspaces, one from each exterior power. The latter task is deferred to the following section. The objective of the present section will therefore be to prove the following:
\begin{theorem}\label{th:irreducible-case}
Let $G$ be a real reductive group. Given a positive integer $k$ and for each $j=1,\ldots,k$ a real inner product space $V_j$ of dimension $d_j \geq 1$,  let $\pi_j \colon G \to  \GL(V_j)$ be an irreducible linear representation. Let $g_1,\ldots,g_N \in G$ and write $g_\iii:=g_{i_1}\cdots g_{i_n}$ for all $\iii =(i_t)_{t=1}^n \in \Sigma_N^*$. Given constants $\beta_j>0$, define a potential $\Phi \colon \Sigma_N^* \to (0,+\infty)$ by
\[\Phi(\iii):=\prod_{j=1}^k \left\|\pi_j(g_\iii)\right\|^{\beta_j}.\]
Suppose that the semigroup generated by $g_1,\ldots,g_N$ is Zariski dense in $G$. Then the following are equivalent:
\begin{enumerate}[(i)]
\item
There exists an equilibrium state of $\Phi$ which is a Bernoulli measure.
\item
The potential $\Phi^{\det} \colon \Sigma_N^* \to (0,+\infty)$ defined by
\[\Phi^{\det}(\iii):= \prod_{j=1}^k \left|\det \pi_j(g_\iii) \right|^{\frac{\beta_j}{d_j}}\]
satisfies $P(\Phi)=P(\Phi^{\det})$.
\item
For every $j=1,\ldots,k$ the group
\[\left\{|\det \pi_j(g)|^{-\frac{1}{d_j}} \pi_j(g) \colon g \in G\right\}\]
is a compact subgroup of $\GL(V_j)$.
\end{enumerate}
\end{theorem}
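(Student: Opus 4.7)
The plan is to prove the equivalences via the cyclic chain $(iii)\Rightarrow(ii)\Rightarrow(i)\Rightarrow(iii)$. The first arrow is immediate: compactness of $\{|\det\pi_j(g)|^{-1/d_j}\pi_j(g):g\in G\}$ in $\GL(V_j)$ yields the two-sided bound $\|\pi_j(g)\|\asymp|\det\pi_j(g)|^{1/d_j}$, so $\Phi(\iii)\asymp\Phi^{\det}(\iii)$ uniformly in $\iii$ and $P(\Phi)=P(\Phi^{\det})$. For the second arrow, note that $\Phi^{\det}$ is \emph{strictly} multiplicative, so $\log\Phi^{\det}_n$ is a Birkhoff sum of the locally constant function $f(x):=\sum_j(\beta_j/d_j)\log|\det\pi_j(g_{x_1})|$, and the classical variational principle delivers a Bernoulli equilibrium state $\mu$ for $\Phi^{\det}$. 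The elementary inequality $\|\pi_j(g)\|\geq|\det\pi_j(g)|^{1/d_j}$ (operator norm dominates the geometric mean of singular values) gives $\Phi\geq\Phi^{\det}$ pointwise, hence $h(\mu)+\Lambda(\Phi,\mu)\geq h(\mu)+\Lambda(\Phi^{\det},\mu)=P(\Phi^{\det})=P(\Phi)$, and $\mu$ is also an equilibrium state for $\Phi$.

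The substantive direction is $(i)\Rightarrow(iii)$. Write $\Gamma$ for the semigroup generated by $g_1,\ldots,g_N$; since $\Gamma$ is Zariski-dense in $G$ and each $\pi_j$ is irreducible, $\pi_j(\Gamma)$ acts irreducibly on $V_j$, and standard arguments from subadditive thermodynamic formalism (cf.\ \cite{FeKa11,BoMo18}) then yield that the product potential $\Phi$ is quasi-multiplicative. By Proposition~\ref{pr:qm-unique}, the hypothesized Bernoulli equilibrium state $\mu=(\sum_ip_i\delta_i)^{\mathbb{N}}$ satisfies uniform Gibbs bounds $\mu([\iii])\asymp e^{-|\iii|P(\Phi)}\Phi(\iii)$; combined with $\mu([\iii])=\prod_tp_{i_t}$ this gives the uniform near-additivity
\[
\log\Phi(\iii)=|\iii|P(\Phi)+\sum_{t=1}^{|\iii|}\log p_{i_t}+O(1).
\]
Specialising to $\iii=\iota^m$ with $g_\iota=g\in\Gamma$, dividing by $m$, and letting $m\to\infty$, Gelfand's spectral-radius formula together with Lemma~\ref{lemma.weight.vs.eigenvalue} produce $\sum_j\beta_j\overline{\chi_j}(\lambda(g))=|\iota|P(\Phi)+\sum_t\log p_{\iota_t}$. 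Applying the same identity to the concatenation $\iota\iota'$ (so that $g_{\iota\iota'}=gg'$) and subtracting the analogous identities for $\iota,\iota'$ separately, the length and probability-vector contributions cancel and we obtain
\[
\sum_{j=1}^k\beta_j\,\overline{\chi_j}\bigl(\lambda(gg')-\lambda(g)-\lambda(g')\bigr)=0\qquad\text{for all }g,g'\in\Gamma.
\]
Theorem~\ref{thm.benoist.density} asserts that these differences generate $\mathfrak{a}_S$ as a closed subgroup, so the continuous linear form $\sum_j\beta_j\overline{\chi_j}$ vanishes identically on $\mathfrak{a}_S$. Each $\overline{\chi_j}|_{\mathfrak{a}_S}$ is a dominant weight, and so lies in the salient dominant Weyl chamber of $\mathfrak{a}_S^*$; combined with $\beta_j>0$, this forces each individual $\overline{\chi_j}|_{\mathfrak{a}_S}=0$.

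To finish, note that $\lambda(h)\in\mathfrak{a}_S$ for every $h\in[G,G]$, so by Lemma~\ref{lemma.weight.vs.eigenvalue} the spectral radii of $\pi_j(h)$ and $\pi_j(h^{-1})$ both equal $1=|\det\pi_j(h)|^{1/d_j}$, which forces every eigenvalue of $\pi_j(h)$ to have absolute value $1$. Decomposing $\pi_j|_{[G,G]}$ into $[G,G]$-irreducible summands, this eigenvalue condition is inherited by each summand, and Lemma~\ref{le:irred.bdd} then yields that each summand lies in a compact subgroup of the ambient $\GL$, so $\pi_j([G,G])$ is compact. Schur's lemma places $\pi_j(Z(G)^0)$ in the commuting division algebra of $\pi_j$, inside which the renormalised set $\{|\det\pi_j(z)|^{-1/d_j}\pi_j(z):z\in Z(G)^0\}$ lies in the compact norm-one group; combined with the finiteness of the component group of $G$, this establishes that $\{|\det\pi_j(g)|^{-1/d_j}\pi_j(g):g\in G\}$ is relatively compact in $\GL(V_j)$, and being a subgroup its closure is a compact subgroup containing it. The main obstacle is the algebraic separation step: converting the joint vanishing $\sum_j\beta_j\overline{\chi_j}|_{\mathfrak{a}_S}=0$ into individual vanishing crucially uses both the irreducibility of each $\pi_j$ (so that $\overline{\chi_j}|_{\mathfrak{a}_S}$ lies in the salient cone of dominant weights) and the strict positivity $\beta_j>0$; it is precisely this separation that will break down in the reducible setting treated in the next section.
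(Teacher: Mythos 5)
Your reductions $(iii)\Rightarrow(ii)\Rightarrow(i)$ are correct and match the paper's in substance: the compactness bound gives $\Phi\asymp\Phi^{\det}$ hence equality of pressures, and the Bernoulli Gibbs measure of the strictly multiplicative $\Phi^{\det}$ is promoted to a $\Phi$-equilibrium state via the pointwise bound $\Phi\geq\Phi^{\det}$. For $(i)\Rightarrow(iii)$, the endgame (Benoist non-arithmeticity forcing $\sum_j\beta_j\overline{\chi}_j|_{\mathfrak{a}_S}=0$, then dominance plus $\beta_j>0$ forcing each $\overline{\chi}_j|_{\mathfrak{a}_S}=0$, then boundedness of eigenvalues and Schur) is also the right endgame, and is essentially what appears in \S\ref{sss:ben} of the paper.

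The gap is your assertion that $\Phi$ itself is quasi-multiplicative. This is false in the generality of the theorem, and it is exactly the point where the bulk of the paper's work lies. The representations $\pi_j$ are hypothesised to be irreducible, but since $G$ is allowed to have several connected components they need not be \emph{strongly} irreducible: each $\pi_j$ can permute a finite family $U_j^1,\ldots,U_j^{n_j}$ of proper subspaces of $V_j$. For the product potential $\Phi(\iii)=\prod_j\|\pi_j(g_\iii)\|^{\beta_j}$, quasi-multiplicativity would require a single bridging word $\kkk$ that \emph{simultaneously}, for every $j$, matches the subspace $U_j^{i}$ on which $\|\pi_j(g_\iii)\|$ is realised to the one on which $\|\pi_j(g_\jjj)\|$ is realised; when the different $\pi_j$ permute their families in incompatible patterns no such $\kkk$ exists, and the Gibbs inequality for $\Phi$ can fail. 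What \cite[Theorem~6]{BoMo18} actually gives, and what the paper uses, is quasi-multiplicativity of the \emph{restricted} potentials $\Phi^{\mathcal{W}}(\iii)=\max_{(W_j)\in\mathcal{W}}\prod_j\|\pi_j(g_\iii)|_{W_j}\|^{\beta_j}$ attached to the \emph{transitivity classes} $\mathcal{W}$ of tuples $(U_j^{i_j})_j$ under the finite orbit action of $G/G^o$; the potential $\Phi$ is then only a constant-factor-bounded envelope $\max_{\mathcal{W}}\Phi^{\mathcal{W}}$. The Bernoulli equilibrium state is shown to be the (unique, Gibbs) equilibrium state of some particular $\Phi^{\mathcal{W}_0}$, and all the Gelfand/Benoist manipulations are carried out for $\Phi^{\mathcal{W}_0}$ and for words $\iii$ with $g_\iii\in G^o$, not for $\Phi$ on all of $\Sigma_N^*$. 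Your invocation of Lemma~\ref{lemma.weight.vs.eigenvalue} and Theorem~\ref{thm.benoist.density} has the same defect: both are statements about \emph{connected} reductive groups, so they must be applied to the restrictions $\hat\pi_j\colon G^o\to\GL(W_j)$ on invariant subspaces $W_j$, not to $\pi_j$ on $V_j$. Once the argument is routed through $\Phi^{\mathcal{W}_0}$, a further block of work is needed that your outline omits entirely: the $\hat\pi_j(G^o)$ act as similitudes on each $W_j$, but the similarity ratios on the different subspaces $U_j^1,\ldots,U_j^{n_j}$ need not coincide a priori, and equalising them is the content of \S5.3.6--5.3.9 (the determinant identities \eqref{eq:dets-ad-mortem}, \eqref{eq:dets-again}, \eqref{eq:dets-same-2}, and the pressure comparisons $P(\Phi)=P(\Phi^{\mathcal{W}})=P(\Phi^{\det})$). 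In the special case that $G$ is connected, every $\pi_j$ is strongly irreducible, there is a single transitivity class, $\Phi^{\mathcal{W}_0}=\Phi$ up to constants, and your streamlined proof is essentially complete; but the theorem as stated also covers disconnected $G$, and there the argument needs the paper's extra machinery.
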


The proof of the implications (iii)$\implies$(ii)$\implies$(i) is straightforward and almost all of the length of the proof of Theorem \ref{th:irreducible-case} arises from the implication (i)$\implies$(iii). As was described briefly in \S\ref{se:tech-thm} this proof itself consists of two somewhat separate parts, which we describe below.

\subsubsection{Comments on the proof}\label{subsub.comments}  The representations $\pi_j$ are irreducible but will not in general be strongly irreducible, so in general there exists for each $j$ a finite collection $U_j^1,\ldots,U_j^{n_j}$ of subspaces of $V_j$ which is permuted by the action of $G$ under the representation $\pi_j$. (If $\pi_j$ is strongly irreducible then we have $n_j=1$ and $U_j^1=V_j$.) We choose these subspaces to be of the least possible dimension and it is not difficult to deduce that they must have pairwise trivial intersection. Each $U_j^i$ is preserved by every element of the (Zariski) identity component\footnote{For the purposes of this description of the proof it makes no difference whether $G^o$ is taken to be the connected component with respect to the analytic topology or with respect to the Zariski topology. However, for technical reasons which will apply later, we define $G^o$ to be the group of real points of the Zariski connected component of $G$.} $G^o$, and in the first part of the proof we consider the action of $G^o$ on each $U_j^i$ via the restriction of $\pi_j$ to a representation $G^o \to \GL(U_j^i)$. By minimality of the dimension of $U_j^i$ this action is irreducible. Using the fact that that there exists a $\Phi$-equilibrium state which is a Bernoulli measure, a mechanism introduced in \cite{BoMo18} for writing $\Phi$ as the pointwise maximum of a finite collection of quasi-multiplicative potentials $\Phi^{\mathcal{W}}$, Proposition \ref{pr:qm-unique}, and Theorem \ref{thm.benoist.density}, we establish using the ideas outlined in the introduction that for each $j$ and $i$ the group $\pi_j(G^o)|_{U_j^i}$ is a group of linear similarity transformations of $U_j^i$ with respect to some inner product on $U_j^i$.

At this point we will have established that for each $j$, the elements of $\pi_j(G^o)$ can be simultaneously block diagonalised (using a splitting of the form $V_j=U_j^{i_1}\oplus \cdots \oplus U_j^{i_r}$) with each diagonal block equal to an orthogonal matrix times a positive real scalar. (This construction can be interpreted by saying that the elements of $\pi_j(G^o)$ are all normal matrices with respect to some consistent inner product structure on $V_j$.) In order to verify that $\pi_j(G^o)$ has the required property (iii) it remains to verify that for each fixed $g$ these scalars are the same for every block. In this part of the proof we must use not only the existence of a potential $\Phi^{\mathcal{W}_0}$ whose equilibrium state is a Bernoulli measure, but the fact the pressure $P(\Phi^{\mathcal{W}_0})$ is equal to the pressure $P(\Phi)$ of the original potential $\Phi$, or equivalently, the fact that $P(\Phi^{\mathcal{W}_0})$ is maximal among all of the pressures $P(\Phi^{\mathcal{W}})$. The underlying intuitive idea is that the products $\pi_j(g_\iii)$ necessarily have non-separated Lyapunov exponents with respect to the Bernoulli measure; this will be shown to imply that these products also have non-separated Lyapunov exponents with respect to the equilibrium measures of the other potentials $\Phi^{\mathcal{W}}$, since if this were not the case those equilibrium states would have a larger top Lyapunov exponent than is allowed by the variational principle. In practice this argument is implemented by comparing the values of various pressure functions associated to the different potentials $\Phi^{\mathcal{W}}$ (which are defined in terms of the growth rate of the norm of each representation and allow for separated Lyapunov exponents) and the potential $\Phi^{\det}$, which is defined in terms of the growth rates of determinants of representations (which does not perceive any difference between Lyapunov exponents). Once it has been shown that for each $g \in G^o$ the scalars associated to each diagonal block in the block diagonalisation of $\pi_j(g)$ are the same, it follows that $\pi_j(G^o)$ is contained in a group of linear similarity transformations of $\GL(V_j)$. The same result follows immediately for $\pi_j(G)$ since the remaining components of $\pi_j(G)$ form a finite collection of continuous images of $\pi_j(G^o)$. 

The respective functions of the two parts of the proof may be illustrated by considering two opposite extreme cases of the argument as follows. If it is known \emph{a priori} that each representation $\pi_j$ is strongly irreducible -- for example, if the group $G$ is known to be connected -- then we have $U_j^1=V_j$ for each $j$ and the first part of the proof establishes directly that each $\pi_j(G)$ is a group of linear similitudes as required. The proof is then complete without meaningful reference to the second part. If on the other hand it is known \emph{a priori} that for each $j$, there is a basis for $V_j$ with respect to which every $\pi_j(g_\iii)$ is represented by a generalised permutation matrix (that is, a matrix with exactly one nonzero entry in each row and in each column) then the subspaces $U_j^i$ are all one-dimensional, the action of $G^o$ on each subspace is trivially by a similitude since no other linear transformations of a one-dimensional space exist, and the first part of the proof is entirely redundant. In this case only the second part of the proof is required. 

\subsubsection{Remarks on a generalisation of Theorem \ref{th:irreducible-case}} 
Before starting the proof of Theorem \ref{th:irreducible-case}, we lastly remark that this theorem (and Theorem \ref{th:main-tech}) can easily be extended to the case of a linear Lie group $G$ which is not necessarily reductive. Indeed, using a reductivisation argument such as \cite[Proposition 6.8]{KaMo18} one may show that the equilibrium states of an affine iterated function system are determined only by the projections of the linear parts of the affinities to a  reductive Levi component (or in explicit co-ordinates, by the block diagonal parts of those linear maps when presented in block upper triangular form). This extended result does not lead to a significantly more powerful version of Theorem \ref{th:main} since in general it can easily occur that the equilibrium states are determined only by a proper subset of the diagonal blocks: the existence of a Bernoulli equilibrium state in the absence of irreducibility (but in the presence of complete reducibility) consequently can be used only to deduce that \emph{some} of the diagonal blocks of the affine transformations must consist of similitudes. Since this extended result requires few additional steps but lacks the clear interest of Theorem \ref{th:main} we leave it to the reader.


\subsection{Proof of the implications (iii)$\implies$(ii)$\implies$(i)}
The implication (iii)$\implies$(ii) is simple: if for each $j=1,\ldots,k$ the group
\[\{|\det \pi_j(g)|^{-1/d_j} \pi_j(g) \colon g \in G\}\]
is contained in a compact subset of $\GL(V_j)$, then we may find $K>0$ such that
\[K^{-1} |\det \pi_j(g)|^{1/d_j} \leq \left\|\pi_j(g)\right\| \leq K |\det \pi_j(g)|^{1/d_j} \]
for all $j=1,\ldots,k$ and all $g \in G$. It follows that for all $\iii \in \Sigma_N$ we have
\[K^{-\sum_{j=1}^k \beta_j} \Phi^{\det}(\iii) \leq \Phi(\iii) \leq K^{\sum_{j=1}^k \beta_j}\Phi^{\det}(\iii)\]
and we deduce that $P(\Phi)=P(\Phi^{\det})$ by direct reference to the definition of the pressure. This proves  (iii)$\implies$(ii). Let us now prove (ii)$\implies$(i). Assuming (ii), let $\mu$ be the Bernoulli measure on $\Sigma_N^*$ with probability vector $(p_1,\ldots,p_N)$ given by
\[p_{i_0}:= \frac{ \prod_{j=1}^k \left|\det \pi_j(g_{i_0}) \right|^{\frac{\beta_j}{d_j}}}{\sum_{i=1}^N \prod_{j=1}^k \left|\det \pi_j(g_i) \right|^{\frac{\beta_j}{d_j}}}\]
for every $i_0=1,\ldots,N$. Since
\begin{align*}P(\Phi^{\det}) &= \lim_{n \to \infty} \frac{1}{n}\log \sum_{|\iii|=n} \Phi^{\det}(\iii) = \lim_{n \to \infty} \frac{1}{n}\log \sum_{|\iii|=n}  \prod_{j=1}^k \left|\det \pi_j(g_\iii) \right|^{\frac{\beta_j}{d_j}}\\
&=\log \sum_{i=1}^N \prod_{j=1}^k \left|\det \pi_j(g_i) \right|^{\frac{\beta_j}{d_j}}\end{align*}
using the multiplicativity of the determinant, we observe that 
\[\mu([\iii])=\frac{ \prod_{j=1}^k \left|\det \pi_j(g_\iii ) \right|^{\frac{\beta_j}{d_j}}}{\left(\sum_{i=1}^N  \prod_{j=1}^k \left|\det \pi_j(g_i) \right|^{\frac{\beta_j}{d_j}}\right)^{|\iii|}}=\frac{ \Phi^{\det}(\iii)}{e^{|\iii|P(\Phi^{\det})}}\]
for every $\iii \in \Sigma_N^*$. Now, for each $n \geq 1$ we have
\begin{eqnarray*}
{\lefteqn{\sum_{|\iii|=n} -\mu([\iii])\log \mu([\iii]) +\sum_{|\iii|=n} \mu([\iii])\log \Phi^{\det}(\iii)}}& &  \\
& & = \sum_{|\iii|=n} \mu([\iii]) \left(nP(\Phi^{\det}) - \log \Phi^{\det}(\iii)+\log \Phi^{\det}(\iii)\right)\\
& & = nP(\Phi^{\det}) \sum_{|\iii|=n}\mu([\iii])= nP(\Phi^{\det}) \end{eqnarray*}
and since
\[h(\mu)= \lim_{n \to \infty} \frac{1}{n}\sum_{|\iii|=n} -\mu([\iii])\log \mu([\iii])\]
and
\[\Lambda\left(\Phi^{\det},\mu\right) = \lim_{n \to \infty}\frac{1}{n}\sum_{|\iii|=n} \mu([\iii])\log \Phi^{\det}(\iii)\]
we conclude that
\[h(\mu)+\Lambda(\Phi^{\det},\mu)    = P\left(\Phi^{\det}\right).\]
Now, clearly
\[\Phi^{\det}(\iii)= \prod_{j=1}^k \left|\det \pi_j(g_\iii) \right|^{\frac{\beta_j}{d_j}} \leq \prod_{j=1}^k \left\|\pi_j(g_\iii)\right\|^{\beta_j}=\Phi(\iii)\]
for every $\iii \in \Sigma_N^*$ using the elementary bound $|\det B| \leq \|B\|^{\dim V_j}$ valid for all $B \in \GL(V_j)$. It follows directly that $\Lambda(\Phi^{\det},\mu) \leq \Lambda(\Phi,\mu)$. We deduce that
\[P(\Phi)=P(\Phi^{\det}) = h(\mu)+\Lambda(\Phi^{\det},\mu) \leq h(\mu)+\Lambda(\Phi,\mu) \leq P(\Phi)\]
where we have used the hypothesis (ii) and, in the final inequality, the subadditive variational principle. 
It follows that $h(\mu)+\Lambda(\Phi,\mu)=P(\Phi)$ and thus the Bernoulli measure $\mu$ is an equilibrium state for the potential $\Phi$. This completes the proof of (ii)$\implies$(i).

\subsection{Proof of (i) $\implies$ (iii)}

\subsubsection{The family of subspaces with finite orbit}\label{subsub.family.of.subspaces}

For each $j=1,\ldots,k$ let $\ell_j \geq 1$ be the smallest possible dimension of a nonzero subspace of $V_j$ which is invariant under $\pi_j(g)$ for all $g \in G^o$, and choose $U_j \subseteq V_j$ to be such an $\ell_j$-dimensional subspace. It is not difficult to see that the function $g \mapsto \pi_j(g)U_j$ is constant on each connected component of $G$: if $g_1,g_2$ belong to the same component $G_i$ then $g_1^{-1}G_i$ is a connected component which contains the identity, hence is $G^o$, hence $g_1^{-1}g_2 \in G^o$, so $\pi_j(g_1^{-1}g_2)U_j=U_j$ and therefore $\pi_j(g_1)U_j=\pi_j(g_2)U_j$. For fixed $j=1,\ldots,k$, let $U_j^1,\ldots,U_j^{n_j}$ denote the complete list of subspaces of $V_j$ having the form $\pi_j(g)U_j$ for some $g \in G$. 

Fix $j \in \{1,\ldots,k\}$. We observe that $\mathrm{span} \bigcup_{i=1}^{n_j} U_j^i$ is a nonzero subspace of $V_j$ which is preserved by $\pi_j(g)$ for every $g \in G$, since each $\pi_j(g)$ acts on the spaces $U_j^i$ by permutation. By irreducibility it follows that this subspace must equal the whole of $V_j$. We now make the following claim: if $i_1,\ldots,i_{t+1}$ are distinct integers in the range $1$ to $n_j$, where $t \geq 1$, then $U_j^{i_{t+1}}$ either is a subspace of the vector space $\mathrm{span} \bigcup_{s=1}^t U_j^{i_s}$ or has trivial intersection with it. Indeed, if neither of these statements is true then 
$0<\dim  U_j^{i_{t+1}}\cap \left(\mathrm{span} \bigcup_{s=1}^t U_j^{i_s}\right)<\dim U_j^{i_{t+1}}=\ell_j$, in which case $U_j^{i_{t+1}}\cap \left(\mathrm{span} \bigcup_{s=1}^t U_j^{i_s}\right)$ is a subspace of $V_j$ which is fixed by $\pi_j(g)$ for all $g \in G^o$ but has dimension strictly less than $\ell_j$, contradicting minimality, and we deduce the truth of the claim. Now let $r_j$ be the largest integer such that we can find distinct integers $i_1,\ldots,i_{r_j}$ for which the spaces $U_j^{i_1},\ldots,U_j^{i_{r_j}}$ form a direct sum. (We observe that $r_j$ is at least $1$ and at most $n_j$, hence is well-defined.) If $U_j^{i_1}\oplus \cdots \oplus U_j^{i_{r_j}}$ is not equal to $V_j$ then by the preceding observation there must be some subspace $U_j^t$ which is not contained in it, hence has trivial intersection with it, allowing us to extend the direct sum, which is a contradiction. We therefore have $V_j=U_j^{i_1}\oplus \cdots \oplus U_j^{i_{r_j}}$ and in particular $r_j\ell_j=d_j$. 

We now claim there exists $C_1>0$ such that
\begin{equation}\label{eq:kappa}\prod_{j=1}^k \left\|\pi_j(g)\right\|^{\beta_j}  \leq C_1\prod_{j=1}^k \max_{1 \leq i \leq n_j} \left\|\pi_j(g)|_{U_j^i}\right\|^{\beta_j} \end{equation}
 for all $g \in G$. It is clearly sufficient to show that for each $j$ there exists $\tau_j>0$ such that $\max_{1 \leq i \leq n_j} \|B|_{U_j^i}\| \geq \tau_j\|B\|$ for every linear map $B \colon V_j \to V_j$, since then we may take $C_1:=\prod_{j=1}^k \tau_j^{-\beta_j}$. By homogeneity it is clearly sufficient to restrict to the case where $\|B\|=1$. If we can show that $\max_{1 \leq i \leq n_j} \|B|_{U_j^i}\|>0$ for every $B \in \mathrm{End}(V_j)$ with $\|B\|=1$ then the existence of $\tau_j$ follows by the compactness of the unit sphere of $\mathrm{End}(V_j)$. But if this inequality fails for some $B \in \mathrm{End}(V_j)$ with $\|B\|=1$ then we have found a nonzero linear map from $V_j$ to itself which is zero on every $U_j^i$, and this is impossible since the spaces $U_j^i$ together span $V_j$. The claim is proved.

\subsubsection{Transitivity classes and the construction of quasi-multiplicative potentials}\label{subsub.transitivity.classes}

Let $\mathfrak{W}$ denote the set of all $k$-tuples $(U_j^{i_j})_{j=1}^k$ such that $1 \leq i_j \leq n_j$ for all $j=1,\ldots,k$. We observe that $G$ acts on $\mathfrak{W}$ by taking the pair $(g, (U_j^{i_j})_{j=1}^k)$ to the tuple $(\pi_j(g)U_j^{i_j})_{j=1}^k$. Since the value of $(\pi_j(g)U_j^{i_j})_{j=1}^k$ depends only on the connected component of $G$ to which $g$ belongs, the $G$-action on $\mathfrak{W}$ factors through $G^o$ and yields an action of the finite group $G/G^o$ on $\mathfrak{W}$. Let us say that a \emph{transitivity class} is a subset of $\mathfrak{W}$ which corresponds to the orbit of a single tuple $(U_j^{i_j})_{j=1}^k$, and denote the set of transitivity classes by $\mathscr{W}$. Obviously, the number of transitivity classes is finite. For every transitivity class $\mathcal{W} \in \mathscr{W}$ let us define a potential $\Phi^{\mathcal{W}} \colon \Sigma_N^* \to (0,+\infty)$ by
\begin{equation*}
\Phi^{\mathcal{W}}(\iii):=\max_{(W_j)_{j=1}^k \in \mathcal{W}} \prod_{j=1}^k \left\|\pi_j(g_\iii)|_{W_j}\right\|^{\beta_j}.
\end{equation*}
The inequality $\Phi^{\mathcal{W}}(\iii\jjj) \leq \Phi^{\mathcal{W}}(\iii)\Phi^{\mathcal{W}}(\jjj)$ follows easily from the definition. It is clear that for each $\iii \in \Sigma_N^*$
\[\Phi(\iii) =  \prod_{j=1}^k \left\|\pi_j(g_\iii)\right\|^{\beta_j} \leq  C_1\prod_{j=1}^k \max_{1 \leq i \leq n_j} \left\|\pi_j(g_\iii)|_{U_j^i}\right\|^{\beta_j} \leq C_1\prod_{j=1}^k \left\|\pi_j(g_\iii)\right\|^{\beta_j} = C_1\Phi(\iii)\] 
and also
\[\prod_{j=1}^k \max_{1 \leq i \leq n_j} \left\|\pi_j(g_\iii)|_{U_j^i}\right\|^{\beta_j}  = \max_{(U_j^{i_j})_{j=1}^k\in\mathfrak{W}} \prod_{j=1}^k  \left\|\pi_j(g_\iii)|_{U_j^{i_j}}\right\|^{\beta_j} =\max_{\mathcal{W} \in \mathscr{W}} \Phi^{\mathcal{W}}(\iii)\]
so that
\begin{equation}\label{eq:kappa-again}C_1^{-1}\Phi(\iii) \leq \max_{\mathcal{W} \in \mathscr{W}}\Phi^{\mathcal{W}}(\iii) \leq \Phi(\iii)\end{equation}
for all $\iii \in \Sigma_N^*$. We observe in particular that $P(\Phi^{\mathcal{W}}) \leq P(\Phi)$ for every transitivity class $\mathcal{W}$ by direct appeal to the definition of the pressure.

By \cite[Theorem 6]{BoMo18}\footnote{See also a predecessor of this result by Quint, based on the first property of ``produit g\'{e}n\'{e}rique'' in \cite[Proposition I.2]{quint.div}, cf.~Step 2 of proof of Theorem \ref{thm.cartan.state}.} there exist $\delta>0$ and a finite subset $F$ of the semigroup $\{g_\iii \colon \iii \in \Sigma_N^*\}$ such that for every $\iii,\jjj \in \Sigma_N^*$ we have
\[\max_{\kkk \in F}\Phi^{\mathcal{W}}(\iii \kkk \jjj) \geq \delta \Phi^{\mathcal{W}}(\iii)\Phi^{\mathcal{W}}(\jjj). \]
By Proposition \ref{pr:qm-unique} this implies that for each transitivity class $\mathcal{W}$ there exists a unique measure $\nu \in \mathcal{M}_\sigma$ which is an equilibrium state for $\Phi^{\mathcal{W}}$, and this measure satisfies the Gibbs inequality
\begin{equation*}
C^{-1}_2e^{-|\iii|P(\Phi^{\mathcal{W}})} \Phi^{\mathcal{W}}(\iii) \leq \nu([\iii]) \leq C_2e^{-|\iii|P(\Phi^{\mathcal{W}})} \Phi^{\mathcal{W}}(\iii)
\end{equation*}
for every $\iii \in \Sigma_N^*$, where $C_2>0$ does not depend on $\iii$. Since the number of transitivity classes is finite, we may choose $C_2$ to be independent of the choice of $\mathcal{W}$ also. We observe in particular that $\nu([\iii])$ is always nonzero.

By hypothesis there exists a Bernoulli measure $\mu \in \mathcal{M}_\sigma$ which is an equilibrium state for $\Phi$. Since $\mu$ is a Bernoulli measure it is ergodic, so by the subadditive ergodic theorem we have for $\mu$-a.e. $x \in \Sigma_N$
\[\lim_{n \to \infty} \frac{1}{n}\log \Phi^{\mathcal{W}}(x|_n)=\Lambda(\Phi^{\mathcal{W}},\mu)\]
for every transitivity class $\mathcal{W}$, and also
\[\lim_{n \to \infty} \frac{1}{n}\log \Phi(x|_n)=\Lambda(\Phi,\mu).\]
In particular for $\mu$-a.e. $x \in \Sigma_N$
\begin{equation}\label{eq.max.trans.class}
\begin{aligned}\Lambda(\Phi,\mu) &= \lim_{n \to \infty} \frac{1}{n}\log \Phi(x|_n) = \lim_{n \to \infty} \frac{1}{n}\log \max_{\mathcal{W}\in\mathscr{W}} \Phi^{\mathcal{W}}(x|_n)\\
&= \max_{\mathcal{W}\in\mathscr{W}} \lim_{n \to \infty} \frac{1}{n}\log  \Phi^{\mathcal{W}}(x|_n)=\max_{\mathcal{W}\in\mathscr{W}} \Lambda(\Phi^{\mathcal{W}},\mu)\end{aligned}
\end{equation}
where we have used \eqref{eq:kappa-again} in the second equation. Choose a transitivity class $\mathcal{W}_0$ which attains this maximum, which we fix for the remainder of the proof. We have
\[P(\Phi) =h(\mu)+\Lambda(\Phi,\mu) = h(\mu)+\Lambda(\Phi^{\mathcal{W}_0},\mu)\leq P(\Phi^{\mathcal{W}_0}) \leq P(\Phi) \]
using the variational principle and the inequality $P(\Phi^{\mathcal{W}}) \leq P(\Phi)$ established earlier. Since the first and last terms in this chain of inequalities are equal, the inequalities must be equations. It follows that $\mu$ is the unique equilibrium state of the potential $\Phi^{\mathcal{W}_0}$. 

\subsubsection{Investigation of the transitivity class $\mathcal{W}_0$}\label{sss:c3} ${}$ We now investigate the transitivity class $\mathcal{W}_0$ specified in the previous paragraph which attains the maximum in \eqref{eq.max.trans.class}. We claim that the fact that the potential $\Phi^{\mathcal{W}_0}$ has a Bernoulli measure as its equilibrium state implies an additional relationship between the tuples $(W_j)_{j=1}^k$ which constitute the transitivity class $\mathcal{W}_0$. Specifically we claim that there exists $C_3>0$ such that for all $\iii\in \Sigma_N^*$ such that $g_\iii \in G^o$,
\begin{equation}\label{eq:thank-you-reviewer-C}\Phi^{\mathcal{W}_0}(\iii) \leq C_3\min_{(W_j)_{j=1}^k \in \mathcal{W}_0}\prod_{j=1}^k \left\|\pi_j(g_{\iii})|_{W_j}\right\|^{\beta_j}.\end{equation}
Before beginning the proof of the claim we make the following observation. By the Gibbs inequality established previously, there exists $C_2>0$ such that for all $\iii \in \Sigma_N^*$,
\[C^{-1}_2 e^{|\iii|P(\Phi)}\mu([\iii])\leq \Phi^{\mathcal{W}_0}(\iii) \leq C_2e^{|\iii|P(\Phi)}\mu([\iii]).\]
If $\iii,\jjj \in \Sigma_N^*$ are arbitrary then we notice that $\mu([\iii\jjj])=\mu([\iii])\mu([\jjj])$ because $\mu$ is Bernoulli, and therefore
\begin{align}\label{eq:bernoulli-1}\Phi^{\mathcal{W}_0}(\iii\jjj) &\geq C^{-1}_2e^{|\iii\jjj|P(\Phi)}\mu([\iii\jjj])\\\nonumber
& = C^{-1}_2e^{|\iii|P(\Phi)}\mu([\iii])e^{|\jjj|P(\Phi)}\mu([\jjj])\geq C^{-3}_2\Phi^{\mathcal{W}_0}(\iii)\Phi^{\mathcal{W}_0}(\jjj).\end{align}
We will use this property to prove the claim.

Let $r$ be the number of (Zariski) connected components of $G$. Since the semigroup $\{g_\iii \colon \iii \in \Sigma_N^*\}$ is Zariski dense in $G$, we may choose $\jjj_1,\ldots,\jjj_r \in \Sigma_N^*$ such that every connected component of $G$ contains precisely one of the elements $g_{\jjj_r}, g_{\jjj_r\jjj_{r-1}},\ldots,g_{\jjj_1\cdots \jjj_r}$ and therefore the sequence $g_{\jjj_r}G^o, g_{\jjj_{r-1}\jjj_{r}}G^o,\ldots, g_{\jjj_1 \cdots \jjj_{r}}G^o$ lists the components of $G$. It follows that if $(W_j)_{j=1}^k \in \mathcal{W}_0$ is arbitrary, then $(\pi_j(g_{\jjj_i\cdots \jjj_r})W_j)_{j=1}^k$ lists all of the elements of $\mathcal{W}_0$ (possibly with repetitions) as $i$ runs through $1,\ldots,r$. 

Now let $\iii \in \Sigma_N^*$ be an arbitrary word such that $g_\iii \in G^o$, and let $(W_j')_{j=1}^k \in \mathcal{W}_0$ such that
\[\prod_{j=1}^k \left\|\pi_j(g_\iii)|_{W_j'}\right\|^{\beta_j}= \min_{(W_j)_{j=1}^k \in \mathcal{W}_0} \prod_{j=1}^k \left\|\pi_j(g_\iii)|_{W_j}\right\|^{\beta_j}.\]
Observe that by definition there exists $(W_j)_{j=1}^k \in \mathcal{W}_0$ such that
\[\Phi^{\mathcal{W}_0}(\iii\jjj_1\iii \jjj_2\iii \cdots \jjj_{r-1}\iii\jjj_r )= \prod_{j=1}^k  \left\|\pi_j(g_{\iii\jjj_1\iii \jjj_2\iii \cdots \jjj_{r-1}\iii\jjj_r})|_{W_j}\right\|^{\beta_j}. \]
Repeated application of \eqref{eq:bernoulli-1} yields
\begin{equation}
\label{eq:1st-est}\Phi^{\mathcal{W}_0}(\iii\jjj_1\iii \jjj_2\iii \cdots \jjj_{r-1}\iii\jjj_r )\geq C_2^{-3(2r-1)} \Phi^{\mathcal{W}_0}(\iii)^r \left(\prod_{t=1}^r \Phi^{\mathcal{W}_0}(\jjj_t)\right) \geq \tau \Phi^{\mathcal{W}_0}(\iii)^r,\end{equation}
say, where $\tau>0$ is independent of $\iii$. In the other direction we obtain
\begin{align*}\Phi^{\mathcal{W}_0}(\iii\jjj_1\iii \jjj_2\iii \cdots \jjj_{r-1}\iii\jjj_r )&= \prod_{j=1}^k \left\|\pi_j(g_{\iii\jjj_1\iii \jjj_2\iii \cdots \jjj_{r-1}\iii\jjj_r})|_{W_j}\right\|^{\beta_j} \\
&\leq \left(\prod_{t=1}^r  \prod_{j=1}^k \left\|\pi_j(g_{\iii \jjj_t})|_{\pi_j(g_{\iii \jjj_{t+1}\cdots \iii\jjj_r}) W_j}\right\|^{\beta_j}\right) \\
&= \left(\prod_{t=1}^r  \prod_{j=1}^k \left\|\pi_j(g_{\iii \jjj_t})|_{\pi_j(g_{\jjj_{t+1}\cdots \jjj_r}) W_j}\right\|^{\beta_j}\right)\end{align*}
where we have used the fact that $(\pi_j(g_\iii)W_j)_{j=1}^k = (W_j)_{j=1}^k$ for every $(W_j)_{j=1}^k \in \mathcal{W}_0$ since $g_\iii \in G^o$. This is clearly bounded by
\[\left(\prod_{t=1}^r\prod_{j=1}^k  \left\|\pi_j(g_{\jjj_t})|_{\pi_j(g_{\jjj_{t+1}\cdots \jjj_r}) W_j}\right\|^{\beta_j}\right)\left(\prod_{t=1}^r  \prod_{j=1}^k \left\|\pi_j(g_{\iii})|_{\pi_j(g_{\jjj_{t}\cdots \jjj_r}) W_j}\right\|^{\beta_j}\right)\]
and hence by 
\[K\left(\prod_{t=1}^r  \prod_{j=1}^k \left\|\pi_j(g_{\iii})|_{\pi_j(g_{\jjj_{t}\cdots \jjj_r}) W_j}\right\|^{\beta_j}\right) \]
where $K:=\prod_{t=1}^r \Phi^{\mathcal{W}_0}(\jjj_t)$, which clearly does not depend on $\iii$. Thus
\[\Phi^{\mathcal{W}_0}(\iii\jjj_1\iii \jjj_2\iii \cdots \jjj_{r-1}\iii\jjj_r )\leq K \prod_{t=1}^r  \prod_{j=1}^k \left\|\pi_j(g_{\iii})|_{\pi_j(g_{\jjj_{t}\cdots \jjj_r}) W_j}\right\|^{\beta_j}.\]
But this in turn  is clearly bounded by
\[K \left(\prod_{j=1}^k \left\|\pi_j(g_{\iii})|_{W_j'}\right\|^{\beta_j}\right)\left(\max_{(W_j'')_{j=1}^k \in \mathcal{W}_0} \prod_{j=1}^k \left\|\pi_j(g_{\iii})|_{W_j''}\right\|^{\beta_j}\right)^{r-1}\]
because as $t$ ranges from $1$ to $r$ the tuple $(\pi_j(g_{\jjj_t\cdots \jjj_r})W_j)_{j=1}^k$ ranges over all of the elements of $\mathcal{W}_0$ and in particular is equal to $(W_j')_{j=1}^k$ for at least one value of $t$. Thus
\begin{equation}\label{eq:2nd-est}\Phi^{\mathcal{W}_0}(\iii\jjj_1\iii \jjj_2\iii \cdots \jjj_{r-1}\iii\jjj_r )\leq K \left(\prod_{j=1}^k \left\|\pi_j(g_{\iii})|_{W_j'}\right\|^{\beta_j}\right)\Phi^{\mathcal{W}_0}(\iii)^{r-1}.\end{equation}
Combining \eqref{eq:1st-est} and \eqref{eq:2nd-est} yields
\[\tau \Phi^{\mathcal{W}_0}(\iii)^r \leq K \left(\prod_{j=1}^k \left\|\pi_j(g_{\iii})|_{W_j'}\right\|^{\beta_j}\right)\Phi^{\mathcal{W}_0}(\iii)^{r-1}\]
where $K,\tau>0$ do not depend on $\iii$, and dividing by $\tau\Phi^{\mathcal{W}_0}(\iii)^{r-1}$ proves the claim.

\subsubsection{A multiplicativity property on a dense subsemigroup of the identity component}\label{subsub.get.multiplicative}
We now claim that for every $\iii,\jjj \in \Sigma_N^*$ such that $g_\iii,g_\jjj\in G^o$ and every $(W_j)_{j=1}^k \in \mathcal{W}_0$, we have
\begin{equation}\label{eq:mult-identity}\prod_{j=1}^k \rho(\pi_j(g_\iii g_\jjj)|_{W_j})^{\beta_j} = \left(\prod_{j=1}^k \rho(\pi_j(g_\iii)|_{W_j})^{\beta_j} \right)\left(\prod_{j=1}^k \rho(\pi_j(g_\jjj)|_{W_j})^{\beta_j} \right)\end{equation}
where $\rho(B)$ denotes the spectral radius of the linear map $B$.
Fix words $\iii$ and $\jjj$ such that $g_\iii,g_\jjj\in G^o$, and fix $(W_j)_{j=1}^k \in \mathcal{W}_0$. We observe that $\pi_j(g_\iii)W_j=W_j$ and $\pi_j(g_\jjj)W_j=W_j$ for all $j=1,\ldots,k$. Using the fact that $\mu$ is a Bernoulli measure we have $\mu([(\iii\jjj)^n])=\mu([\iii])^n\mu([\jjj])^n=\mu([\iii^n])\mu([\jjj^n])$ for every $n \geq 1$, so by the Gibbs inequality
\begin{align*}\Phi^{\mathcal{W}_0}(\iii^n)\Phi^{\mathcal{W}_0}(\jjj^n)&\leq C^2_2 e^{n(|\iii|+|\jjj|)P(\Phi)} \mu([\iii^n])\mu([\jjj^n]) \\
&=C^2_2 e^{n(|\iii|+|\jjj|)P(\Phi)} \mu([(\iii\jjj)^n])\\
&\leq C^3_2 \Phi^{\mathcal{W}_0}((\iii\jjj)^n)\end{align*}
and similarly
\[ \Phi^{\mathcal{W}_0}((\iii\jjj)^n) \leq C^3_2\Phi^{\mathcal{W}_0}(\iii^n)\Phi^{\mathcal{W}_0}(\jjj^n).\]
We have
\[ \prod_{j=1}^k \left\|\pi_j(g_{\iii\jjj}^n|_{W_j})\right\|^{\beta_j} \leq \Phi^{\mathcal{W}_0}((\iii\jjj)^n)\]
by the definition of $\Phi^{\mathcal{W}_0}$, and since $g_{\iii\jjj}^n \in G^o$ we have
\[\Phi^{\mathcal{W}_0}((\iii\jjj)^n) \leq C_3\min_{(W_j')_{j=1}^k \in \mathcal{W}_0} \prod_{j=1}^k \left\|\pi_j(g_{\iii\jjj}^n|_{W_j'})\right\|^{\beta_j}\leq C_3\prod_{j=1}^k \left\|\pi_j(g_{\iii\jjj}^n|_{W_j})\right\|^{\beta_j}\]
by the previous claim. Likewise
\[ \prod_{j=1}^k \left\|\pi_j(g_{\iii}^n|_{W_j})\right\|^{\beta_j} \leq \Phi^{\mathcal{W}_0}(\iii^n) \leq C_3 \prod_{j=1}^k \left\|\pi_j(g_{\iii}^n|_{W_j})\right\|^{\beta_j}\]
and
\[ \prod_{j=1}^k \left\|\pi_j(g_{\jjj}^n|_{W_j})\right\|^{\beta_j} \leq \Phi^{\mathcal{W}_0}(\jjj^n) \leq C_3 \prod_{j=1}^k \left\|\pi_j(g_{\jjj}^n|_{W_j})\right\|^{\beta_j}.\]
Thus
\begin{align*}\left(\prod_{j=1}^k \left\|\pi_j(g_{\iii}^n|_{W_j})\right\|^{\beta_j}\right)\left(\prod_{j=1}^k \left\|\pi_j(g_{\jjj}^n|_{W_j})\right\|^{\beta_j}\right) &\leq \Phi^{\mathcal{W}_0}(\iii^n)\Phi^{\mathcal{W}_0}(\jjj^n)\\
& \leq  C^3_2 \Phi^{\mathcal{W}_0}((\iii\jjj)^n)\\
&\leq C^3_2 C_3\prod_{j=1}^k \left\|\pi_j(g_{\iii\jjj}^n|_{W_j})\right\|^{\beta_j} \end{align*}
and similarly
\begin{align*}\prod_{j=1}^k \left\|\pi_j(g_{\iii\jjj}^n|_{W_j})\right\|^{\beta_j} &\leq \Phi^{\mathcal{W}_0}((\iii\jjj)^n) \\&\leq C^3_2 \Phi^{\mathcal{W}_0}(\iii^n)\Phi^{\mathcal{W}_0}(\jjj^n)\\
&\leq C^3_2 C_3^2 \left(\prod_{j=1}^k \left\|\pi_j(g_{\iii}^n|_{W_j})\right\|^{\beta_j}\right)\left(\prod_{j=1}^k \left\|\pi_j(g_{\jjj}^n|_{W_j})\right\|^{\beta_j}\right).\end{align*}
We have obtained 
\[C^{-3}_2 C_3^{-1} \leq \frac{\prod_{j=1}^k \left\|\pi_j(g_{\iii\jjj}^n|_{W_j})\right\|^{\beta_j} }{ \left(\prod_{j=1}^k \left\|\pi_j(g_{\iii}^n|_{W_j})\right\|^{\beta_j}\right)\left(\prod_{j=1}^k \left\|\pi_j(g_{\jjj}^n|_{W_j})\right\|^{\beta_j}\right)} \leq C^3_2 C_3^2\]
for all $n \geq 1$. Taking the power $\frac{1}{n}$ and letting $n\to \infty$ we obtain by Gelfand's formula
\[\frac{\prod_{j=1}^k \rho(\pi_j(g_\iii g_\jjj)|_{W_j})^{\beta_j}}{ \left(\prod_{j=1}^k \rho(\pi_j(g_\iii)|_{W_j})^{\beta_j} \right)\left(\prod_{j=1}^k \rho(\pi_j(g_\jjj)|_{W_j})^{\beta_j} \right)}=1\]
for all $\iii,\jjj \in \Sigma_N^*$ such that $g_\iii,g_\jjj \in G^o$, and this is precisely \eqref{eq:mult-identity}.

\subsubsection{Application of the theorem of Benoist}\label{sss:ben}
We now apply the work of Benoist to show that the identity \eqref{eq:mult-identity} severely restricts the possible structures of the groups $\{\pi_j(g)|_{W_j} \colon g \in G^o\}$ for $(W_j)_{j=1}^k \in \mathcal{W}_0$. 
Fix an arbitrary tuple $(W_j)_{j=1}^k \in \mathcal{W}_0$ and define
\begin{equation}\label{eq.defn.xi}
\xi(g):=\prod_{j=1}^k \rho(\pi_j(g)|_{W_j})^{\beta_j}  
\end{equation}
for all $g \in G^o$. The identity \eqref{eq:mult-identity} asserts that $\xi(g_\iii g_\jjj)=\xi(g_\iii)\xi(g_\jjj)$ for all $\iii,\jjj \in \Sigma_N^*$ such that $g_\iii,g_\jjj \in G^o$. 

Recall that by construction (\S \ref{subsub.family.of.subspaces}), for each $j=1,\ldots,k$, the restriction of $\pi_j$ to the connected reductive group $G^o$ gives rise to an irreducible linear representation of $G^o$ on $W_j$. Denote this representation by $\hat{\pi}_j$. We will show that for each $j=1,\ldots,k$ the image $\hat{\pi}_j([G^o,G^o])$ is compact. If $[G^o,G^o]$ is itself compact then this result is trivial, so without loss of generality we assume that the semisimple group $[G^o,G^o]$ is non-compact. For each $j$ let $\hat{\chi}_j$ be the highest weight of $\hat{\pi}_j$ so that $\overline{\hat{\chi}}_j \in \mathfrak{a}^\ast$ where $\mathfrak{a}$ is a fixed Cartan subspace in the Lie algebra $\mathfrak{g}$ of $G$ (see \S \ref{subsub.cartanspace} and \S \ref{subsub.rep}). By Lemma \ref{lemma.weight.vs.eigenvalue}, $(\ref{eq.defn.xi})$ can be rewritten as
\begin{equation}\label{eq.xi.to.chi}
\log \xi(g)=\sum_{j=1}^k \beta_j \overline{\hat{\chi}}_j(\lambda(g))
\end{equation}
where $\lambda$ is the Jordan projection on a fixed Weyl chamber $\mathfrak{a}^+$ in $\mathfrak{a}$ (\S \ref{subsub.Cartan.Jordan}). 

Denote by $\Gamma$ the semigroup in $G$ generated by $\{g_1,\ldots,g_N\}$ and by $\Gamma_o$ the intersection $G^o \cap \Gamma$. Since by hypothesis $\Gamma$ is Zariski dense in $G$, the semigroup $\Gamma_o$ is Zariski dense in $G^o$. Setting $\bar{\chi}:=\sum_{j=1}^k \beta_j \overline{\hat{\chi}}_j$, in view of $ (\ref{eq.defn.xi})$ and $(\ref{eq.xi.to.chi})$, the equation $(\ref{eq:mult-identity})$ implies that the set 
\[
\{\lambda(\gamma_1\gamma_2)-\lambda(\gamma_1)-\lambda(\gamma_2)\colon\gamma_1, \gamma_2 \in \Gamma_o\}
\]
is contained in the subspace $\ker \overline{\chi}$. Since the latter is closed, by Theorem \ref{thm.benoist.density} we deduce that the semisimple part $\mathfrak{a}_S$ of the Cartan space $\mathfrak{a}$ is contained in $\ker \overline{\chi}$. Furthermore, since for each $j=1,\ldots,k$, $\overline{\hat{\chi}}_j$ is a dominant weight (in particular, it takes non-negative values on the cone $\mathfrak{a}_S \cap \mathfrak{a}^+$) and $\beta_j>0$, this implies that for each $j=1,\ldots,k$, we have $\mathfrak{a}_S \subseteq \ker \overline{\hat{\chi}}_j$. Hence by Lemma \ref{lemma.weight.vs.eigenvalue} the spectral radius of every element of $\hat\pi_j([G^o,G^o])$ is $1$. The determinant of every element of $\hat\pi_j([G^o,G^o])$ is also $1$ as a direct consequence of the definition of $[G^o,G^o]$ (as closure of a group generated by elements of type $ghg^{-1}h^{-1}$), so every element of $\hat\pi_j([G^o,G^o])$ has every eigenvalue equal to $1$ in modulus. Since $[G^o,G^o]$ is semisimple it acts completely reducibly on $W_j$, so by applying Lemma \ref{le:irred.bdd} to each subspace in a decomposition of $W_j$ into invariant subspaces on which $[G^o,G^o]$ acts irreducibly, it follows that $\hat\pi_j([G^o,G^o])$ is a compact subgroup of $\GL(W_j)$ as required.

On the other hand, since $\hat{\pi}_j$ is an irreducible representation (\S \ref{subsub.family.of.subspaces}), by Schur's lemma, $\R Z(\hat{\pi}_j(G^o)) \leq \End_{\R \hat{\pi}_j(G^o)}(W_j)$ is isomorphic to either $\R$ or $\C$ as a real division algebra. In the first case, $Z(\hat{\pi}_j(G^o))$ is contained in the group of homotheties $\simeq \R^\ast$ of $W_j$ and in the latter case it is contained in a copy of $\SO(2,\R) \times \R^\ast$ in $\GL(W_j)$. Finally we recall that the connected real reductive group $G^o$ is an almost direct product of its center $Z(G^o)$ and $[G^o,G^o]$ (\cite[Proposition 2.2]{borel-tits}), which is to say the map $Z(G^\circ) \times [G^o,G^o] \to G^o$ defined by $(z,g) \mapsto zg$ is surjective with finite kernel. We conclude that $\hat{\pi}_j(G^o)$ is contained in a compact subgroup of $\GL(W_j)$ modulo factoring out the absolute value of the determinant of each element, and therefore each of the groups $\hat{\pi}_j(G^o)$ is a group of linear similarity transformations of $W_j$ with respect to some Euclidean structure on $W_j$. 

Now recall that, for each $j=1,\ldots,k$, the finite group $G/G^o$ acts transitively on $\{U^i_j \colon i=1,\ldots,n_j\}$. Since for each $j=1,\ldots,k$ we have $W_j=U_j^i$ for some $i \in \{1,\ldots,n_j\}$, by transitivity of $G/G^o$, repeating the same argument above for every $(W_j)_{j=1}^k \in \mathcal{W}_0$, we deduce that up to conjugation in $\GL(V_j)$, $\pi_j(G^o)|_{U^i_j}$ is contained in the group of linear similarities of $U^i_j$ for every $i=1,\ldots,n_j$, for every $j=1,\ldots,k$. In particular, passing to matrix representation by  convenient choice of bases for $U^{i_\ell}_j$'s for $\ell=1,\ldots,r_j$ and $j=1,\ldots,k$, $\pi_j(G^o)$ is contained in the group of block diagonal matrices of the form
\begin{equation}\label{eq.matrix.form}
\begin{bmatrix}
\gamma_1 O_1 & 0 & \dots & 0 \\
0 & \gamma_2 O_2 & \dots & \vdots \\
\vdots &  & \ddots & 0\\
0 & \dots & 0 & \gamma_{r_j} O_{r_j}
\end{bmatrix}
\end{equation}
where the $\gamma_i$'s are scalars in $\mathbb{R}^\ast_+$ and $O_i$'s are $\ell_j \times \ell_j$ orthogonal matrices. We have completed the first of the two parts of the proof as described in \S \ref{subsub.comments}.

\subsubsection{The identity of the scalars.}

In the second part of the proof we wish to show that for every $g \in G^o$, in the matrix representation \eqref{eq.matrix.form} we have $\gamma_1=\cdots =\gamma_{r_j}$. Since obviously each $\gamma_i$ is equal to $|\det (\gamma_i O_i)|^{1/\ell_j}$, the idea is to show that for each $g \in G^o$ and $j=1,\ldots,k$ the quantity $|\det \pi_j(g)|_{U_j^i}|^{1/\ell_j}$ is independent of $i$. Since $V_j$ can be written as a direct sum of a sub-collection of spaces $U_j^{i_1},\ldots,U_j^{i_{r_j}}$, this in turn is clearly equivalent to the identity
\begin{equation}\label{eq:dets-same}\left|\det \left(\pi_j(g)|_{U_j^i}\right)\right|^{\frac{1}{\ell_j}} = |\det \pi_j(g)|^{\frac{1}{d_j}}\end{equation}
for every $i=1,\ldots,n_j$ and $j=1,\ldots,k$, which is what shall be shown. It will then be a straightforward matter to conclude the theorem. 

We therefore undertake to prove \eqref{eq:dets-same}. To establish this equality we must use the fact that $\Phi^{\mathcal{W}_0}$ has the greatest pressure of any $\Phi^{\mathcal{W}}$, which we did not previously substantially use. The key fact which we shall ultimately demonstrate is that there exists $C>0$ such that $C^{-1}\Phi^{\mathcal{W}_0}(\iii) \leq \Phi^{\mathcal{W}}(\iii) \leq C\Phi^{\mathcal{W}_0}(\iii)$ for every $\iii \in \Sigma_N^*$ such that $g_\iii \in G^o$, for every transitivity class $\mathcal{W}$.

\subsubsection{A first identity involving determinants}\label{sss:first-id}
Fix $j \in \{1,\ldots,k\}$. If we knew that the number $n_j$ of spaces $U_j^i$ was equal to exactly $d_j/\ell_j$ then we would have $V_j = \bigoplus_{i=1}^{n_j} U_j^i$ and the identity
\begin{equation}\label{eq:dets-ad-mortem}\left(\prod_{i=1}^{n_j}\left|\det \left(\pi_j(g)|_{U_j^i}\right)\right|^{\frac{1}{\ell_j}} \right)^{\frac{1}{n_j}} =|\det \pi_j(g)|^{\frac{1}{d_j}}\end{equation}
would be obvious. However, in general we do not necessarily have $n_j=d_j/\ell_j$. Our first task will be to show that the above identity remains true even when $n_j>d_j/\ell_j$ and the spaces $U_j^1,\ldots,U_j^{n_j}$ do not form a direct sum.  The proof of this equality is conducted by exploring the combinatorial relationships between the similarity ratios $\gamma_i(g):=|\det (\pi_j(g)|_{U_j^i})|^{1/\ell_j}$ and subspaces $U_j^i$. The fundamental task will be to show that the list of spaces $U_j^1,\ldots,U_j^{n_j}$ may be partitioned into equal-sized classes in such a way that every $g \in G^o$ has constant similarity ratio on each class, and such that the spans of the classes form a direct sum.

For $i=1,\ldots,n_j$ and $g \in G^o$, let $\gamma_i(g) :=|\det (\pi_j(g)|_{U_j^i})|^{1/\ell_j}\in \mathbb{R}^\ast_+$ denote the similarity ratio of $\pi_j(g)|_{U^{i}_j}$. Define an equivalence relation $\sim$ on $\{1,\ldots,n_j\}$ by writing $i_1 \sim i_2$ if and only if $\gamma_{i_1}(g)=\gamma_{i_2}(g)$ for all $g \in G^o$. Let $\mathsf{x}_1,\ldots,\mathsf{x}_p$ denote the equivalence classes under $\sim$. There is a natural action of $G/G^o$ on $\{1,\ldots,n_j\}$ which takes the pair $([g],i)$ to the unique integer $i'$ such that $\pi_j(g)U_j^i=U_j^{i'}$, and this action is obviously transitive since $G/G^o$ acts transitively on the spaces $U_j^1,\ldots,U_j^{n_j}$.  For distinct $i_1$ and $i_2$ and arbitrary $g \in G^o$ and $h \in G$ it is not difficult to see that $\pi_j(g)$ has distinct similarity ratios on $U_j^{i_1}$ and $U_j^{i_2}$ if and only if $\pi_j(hgh^{-1})$ has distinct similarity ratios on $\pi_j(h)U_j^{i_1}$ and $\pi_j(h)U_j^{i_2}$, so the action on $\{1,\ldots,n_j\}$ respects the equivalence relation $\sim$ and in particular has the effect of inducing a permutation of the equivalence classes $\mathsf{x}_1,\ldots,\mathsf{x}_p$. The transitivity of the action of $G/G^o$ on $\{1,\ldots,n_j\}$ easily implies that this action of $G/G^o$ on the set of equivalence classes is transitive. It follows in particular that the equivalence classes must all have the same cardinality: we have $\#\mathsf{x}_t=n_j/p$ for every $t=1,\ldots,p$.

For each equivalence class $\mathsf{x}_t$ define $X_t$ to be the span of the union of all the subspaces $U_j^i$ such that $i \in \mathsf{x}_t$. Arguing as in the second paragraph of \S\ref{subsub.family.of.subspaces} we note that every $X_t$ must be equal to a direct sum $U_j^{i_1}\oplus \cdots \oplus U_j^{i_q}$ for some suitable choice of indices $i_1,\ldots,i_q \in \mathsf{x}_t$ and for some integer $q \geq 1$ which \emph{a priori} might depend on $t$. (To see this, consider a direct sum $U_j^{i_1}\oplus \cdots \oplus U_j^{i_q}\subseteq X_t$ with $i_1,\ldots,i_q \in \mathsf{x}_t$ which is maximal in the sense that it cannot be extended by a further direct summand $U_j^{i_{q+1}}$ such that $i_{q+1} \in \mathsf{x}_t$. If every $U^i_j$ satisfying $i \in \mathsf{x}_t$ is a subspace of this direct sum then the direct sum equals $X_t$ as required. Otherwise, there exists $U^i_j$ satisfying $i \in \mathsf{x}_t$ which neither is a subspace of $U_j^{i_1}\oplus \cdots \oplus U_j^{i_q}$ nor forms a direct sum with it, in which case the intersection $(U_j^{i_1}\oplus \cdots \oplus U_j^{i_q}) \cap U_j^i$ is nonzero, has finite orbit under the action of $\pi_j(G)$, and has dimension smaller than $\ell_j$, contradicting the definition of $\ell_j$. We conclude that any such maximal direct sum yields a decomposition of $X_t$ with the claimed properties.) Now, as a consequence of the result shown in \S\ref{sss:ben}, every $U^i_j$ admits an inner product structure with respect to which every $g \in G^o$ acts on $U^i_j$ as a similarity transformation. Combined with the existence of the aforementioned direct sums this implies that for every $t=1,\ldots,p$ there exists an inner product structure on $X_t$ with respect to which every $g \in G^o$ acts on $X_t$ as a similarity transformation. For distinct $t_1,t_2$ in the range $1,\ldots,p$, by the definition of $\sim$ there exists $g \in G^o$ such that $\pi_j(g)$ has different similarity ratios on $X_{t_1}$ and on $X_{t_2}$, and this implies that necessarily $X_{t_1} \cap X_{t_2}=\{0\}$. We conclude that the spaces $X_1,\ldots,X_p$ form a direct sum, which is equal to the span of the spaces $U_j^1,\ldots,U_j^{n_j}$ and hence is equal to $V_j$. Since $G/G^o$ transitively permutes the set of equivalence classes $\mathsf{x}_1,\ldots,\mathsf{x}_p$ it follows that the action $([g],X_t) \mapsto \pi_j(g)X_t$ transitively permutes the spaces $X_1,\ldots,X_p$. These spaces are  therefore pairwise isomorphic, so $\dim X_t$ is independent of $t$ and therefore $\dim X_t=d_j/p$ for every $i=1,\ldots,p$. 

We may now prove \eqref{eq:dets-ad-mortem}. We observe that for every $g \in G^o$ and $t \in \{1,\ldots,p\}$
\[\left|\det \left(\pi_j(g)|_{X_{t}}\right)\right|^{\frac{1}{\dim X_{t}}} = \left(\prod_{i \in \mathsf{x}_{t}}  \left|\det \left(\pi_j(g)|_{U_j^i}\right)\right|^{\frac{1}{\ell_j}}\right)^{\frac{1}{\#\mathsf{x}_{t} }}  \]
 because the term on the left is the similarity ratio of $\pi_j(g)$ on $X_{t}$, which is also the similarity ratio of $\pi_j(g)$ on $U_j^i$ for every $i \in \mathsf{x}_t$. This is to say
 \[\left|\det \left(\pi_j(g)|_{X_t}\right)\right|^{\frac{p}{d_j}} = \left(\prod_{i \in \mathsf{x}_{t}}  \left|\det \left(\pi_j(g)|_{U_j^i}\right)\right|^{\frac{1}{\ell_j}}\right)^{\frac{p}{n_j}}  \]
for every $t=1,\ldots,p$. Since $V_j=\bigoplus_{t=1}^p X_t$, we also have
 \[\prod_{t=1}^p \det \left(\pi_j(g)|_{X_t}\right) = \det \pi_j(g).\]
Hence
\begin{align*}
|\det \pi_j(g)| &= \prod_{t=1}^p \left|\det \left(\pi_j(g)|_{X_t}\right)\right|\\
& = \prod_{t=1}^p\left(\prod_{i \in \mathsf{x}_{t}}  \left|\det \left(\pi_j(g)|_{U_j^i}\right)\right|^{\frac{1}{\ell_j}}\right)^{\frac{d_j}{n_j}}  =\left(\prod_{i=1}^{n_j} \left|\det \left(\pi_j(g)|_{U_j^i}\right)\right|^{\frac{1}{\ell_j}}\right)^{\frac{d_j}{n_j}}  \end{align*}
and this is precisely \eqref{eq:dets-ad-mortem}.

\subsubsection{A second identity involving determinants} Here, we will apply \eqref{eq:dets-ad-mortem} to derive a further identity: we claim that for all $g \in G^o$ and $\mathcal{W} \in \mathscr{W}$
\begin{equation}\label{eq:dets-again}\left(\prod_{(W_j)_{j=1}^k \in \mathcal{W}} \prod_{j=1}^k \left|\det \left(\pi_j(g)|_{W_j}\right)\right|^{\frac{\beta_j}{\ell_j}} \right)^{\frac{1}{\#\mathcal{W}}} = \prod_{j=1}^k|\det \pi_j(g)|^{\frac{\beta_j}{d_j}}.\end{equation}
To see this fix $g \in G^o$, let $\mathcal{W}$ be a transitivity class and let $(W_j')_{j=1}^k \in \mathcal{W}$ be arbitrary. We note that the sets
\[\left\{[h] \in G/G^o \colon (\pi_j(h)W_j')_{j=1}^k =(W_j)_{j=1}^k\right\}\]
for distinct $(W_j)_{j=1}^k \in \mathcal{W}$ form a partition of $G/G^o$ into cosets, hence each has the same cardinality. We deduce that
\[\left(\prod_{(W_j)_{j=1}^k \in \mathcal{W}} \prod_{j=1}^k \left|\det \left(\pi_j(g)|_{W_j}\right)\right|^{\frac{\beta_j}{\ell_j}}\right)^{\frac{1}{\#\mathcal{W}}}=\left(\prod_{[h]\in G/G^o} \prod_{j=1}^k \left|\det \left(\pi_j(g)|_{\pi_j(h)W_j'}\right)\right|^{\frac{\beta_j}{\ell_j}}\right)^{\frac{1}{\#G/G^o}}.\] 
It is therefore sufficient to show that for each $j=1,\ldots,k$, for every $i_0 \in \{1,\ldots,n_j\}$,
\[\left(\prod_{[h]\in G/G^o} \left|\det \left(\pi_j(g)|_{\pi_j(h)U_j^{i_0}}\right)\right|^{\frac{1}{\ell_j}}\right)^{\frac{1}{\#G/G^o}} = |\det \pi_j(g)|^{\frac{1}{d_j}}.\]
Fix such a $j$ and $i_0$. As before, the sets
\[\left\{[h] \in G/G^o \colon \pi_j(h)U_j^{i_0}=U^i_j\right\}\]
form a partition of $G/G^o$ into cosets and hence have equal cardinality, which implies that
\[\left(\prod_{[h]\in G/G^o} \left|\det \left(\pi_j(g)|_{\pi_j(h)U_j^{i_0}}\right)\right|^{\frac{1}{\ell_j}}\right)^{\frac{1}{\#G/G^o}} = \left(\prod_{i=1}^{n_j} \left|\det \left(\pi_j(g)|_{U_j^i}\right)\right|^{\frac{1}{\ell_j}}\right)^{\frac{1}{n_j}}.\] 
By \eqref{eq:dets-ad-mortem} this last expression is equal to $|\det \pi_j(g)|^{1/d_j}$, so combining the identities obtained so far yields \eqref{eq:dets-again}.

\subsubsection{Two inequalities between potentials}\label{subsub.two.ineq}
Let us define a new potential by
\[\Phi^{\det}(\iii):=\prod_{j=1}^k |\det \pi_j(g_\iii)|^{\frac{\beta_j}{d_j}}\]
for all $\iii \in \Sigma_N^*$. We clearly have $\Phi^{\det}(\iii\jjj)=\Phi^{\det}(\iii)\Phi^{\det}(\jjj)$ for all $\iii,\jjj \in \Sigma_N^*$. We aim to show that 
\begin{equation}\label{eq:pressures-same}P(\Phi)=P(\Phi^{\mathcal{W}})=P(\Phi^{\det})\end{equation}
for all transitivity classes $\mathcal{W}$.

In pursuit of \eqref{eq:pressures-same} we will prove two inequalities. We first claim that there exists $C_4>0$ such that for every transitivity class $\mathcal{W}$ we have $\Phi^{\det}(\iii)\leq C_4\Phi^{\mathcal{W}}(\iii) $ for all $\iii \in \Sigma_N^*$. We begin by considering the case where $\iii \in \Sigma_N^*$ satisfies $\iii \in G^o$. It follows easily from \eqref{eq:dets-again} that
\begin{align}\label{eq:sixteen}\Phi^{\mathcal{W}}(\iii)&=\max_{(W_j)_{j=1}^k \in \mathcal{W}} \prod_{j=1}^k \left\|\pi_j(g_\iii)|_{W_j}\right\|^{\beta_j} \geq \max_{(W_j)_{j=1}^k \in \mathcal{W}} \prod_{j=1}^k  \left|\det \left(\pi_j(g_\iii)|_{W_j}\right)\right|^{\frac{\beta_j}{\ell_j}} \\\nonumber
&\geq \left(\prod_{(W_j)_{j=1}^k \in \mathcal{W}} \prod_{j=1}^k \left|\det \left(\pi_j(g_\iii)|_{W_j}\right)\right|^{\frac{\beta_j}{\ell_j}} \right)^{\frac{1}{\#\mathcal{W}}}\\\nonumber
& = |\det \pi_j(g_\iii)|^{\frac{\beta_j}{d_j}} = \Phi^{\det}(\iii)\end{align}
for every transitivity class $\mathcal{W}$ and every $\iii \in \Sigma_N^*$ such that $g_\iii \in G^o$. Now observe that by the Zariski density of the semigroup $\{g_\iii \colon \iii \in \Sigma_N^*\}$ in $G$, we may choose $\kkk_1,\ldots,\kkk_r$ such that every connected component of $G$ contains one of the elements $g_{\kkk_t}$. Given $\iii \in \Sigma_N^*$ observe that we can choose $t_0 \in \{1,\ldots,r\}$ such that $g_{\iii \kkk_{t_0}} \in G^o$. We have $\Phi^{\mathcal{W}}(\iii \kkk_{t_0}) \leq \Phi^{\mathcal{W}}(\iii)\Phi^{\mathcal{W}}(\kkk_{t_0})$ and $\Phi^{\det}(\iii\kkk_{t_0})=\Phi^{\det}(\iii)\Phi^{\det}(\kkk_{t_0})$, and therefore using \eqref{eq:sixteen}
\[\frac{\Phi^{\det}(\iii)}{\Phi^{\mathcal{W}}(\iii)} \leq \left(\frac{\Phi^{\mathcal{W}}(\kkk_{t_0})}{\Phi^{\mathcal{W}}(\iii\kkk_{t_0})}\right)\left(\frac{\Phi^{\det}(\iii\kkk_{t_0})}{\Phi^{\det}(\kkk_{t_0})}\right) \leq \frac{\Phi^{\mathcal{W}}(\kkk_{t_0})}{\Phi^{\det}(\kkk_{t_0})}\leq C_4,\]
say, where
\[C_4:=\max_{\mathcal{W}\in\mathscr{W}} \max_{1 \leq t \leq r} \frac{\Phi^{\mathcal{W}}(\kkk_{t})}{\Phi^{\det}(\kkk_{t})}\]
which proves the claim.

We now establish our second inequality: we claim that there exists $C_5>0$ such that $\Phi^{\mathcal{W}_0}(\iii) \leq C_5 \Phi^{\det}(\iii)$ for every $\iii \in \Sigma_N^*$. By the inequality \eqref{eq:thank-you-reviewer-C} established in \S\ref{sss:c3} we have
\[\Phi^{\mathcal{W}_0}(\iii) \leq C_3 \prod_{j=1}^k \left\|\pi_j(g_\iii)|_{W_j}\right\|^{\beta_j} \]
for some  $C_3>0$ and every  $g_\iii \in G^o$ and $(W_j)_{j=1}^k\in\mathcal{W}_0$. It follows in particular that \[ \frac{\Phi^{\mathcal{W}_0}(\iii)}{\prod_{j=1}^k \left| \det \left(\pi_j(g_\iii)|_{W_j}\right)\right|^{\frac{\beta_j}{\ell_j}}} \leq \frac{C_3 \prod_{j=1}^k \left\|\pi_j(g_\iii)|_{W_j}\right\|^{\beta_j}}{\prod_{j=1}^k \left| \det \left(\pi_j(g_\iii)|_{W_j}\right)\right|^{\frac{\beta_j}{\ell_j}}}\]
for all $g_\iii \in G^o$ and $(W_j)_{j=1}^k\in\mathcal{W}_0$.
Since for each $j$
\[\left\{\left| \det \left(\pi_j(g)|_{W_j}\right)\right|^{-\frac{1}{\ell_j}} \pi_j(g)|_{W_j}\colon g \in G^o\right\}\]
is contained in a compact subset of $\GL(W_j)$, it follows that there exists $K>0$ such that
\[ \frac{\Phi^{\mathcal{W}_0}(\iii)}{\prod_{j=1}^k \left| \det \left(\pi_j(g_\iii)|_{W_j}\right)\right|^{\frac{\beta_j}{\ell_j}}} \leq K\]
for all $g_\iii \in G^o$ and $(W_j)_{j=1}^k\in\mathcal{W}_0$. Taking  the geometric mean over all $(W_j)_{j=1}^k\in\mathcal{W}_0$ for fixed $g_\iii$ using \eqref{eq:dets-again} yields
\[\frac{\Phi^{\mathcal{W}_0}(\iii)}{\Phi^{\det}(\iii)}   \leq K\]
for all $\iii \in \Sigma_N^*$ such that $g_\iii \in G^o$. We now extend to the case of general words $\iii$. Fix $\iii \in \Sigma_N^*$ and observe that we may choose $t_0 \in \{1,\ldots,r\}$ such that $g_{\iii\kkk_{t_0}} \in G^o$. For some $(W_j)_{j=1}^k \in \mathcal{W}^0$ we have
 \[\Phi^{\mathcal{W}_0}(\iii) =  \prod_{j=1}^k \left\|\pi_j(g_\iii)|_{W_j}\right\|^{\beta_j}\]
and therefore
\begin{align*} \Phi^{\mathcal{W}_0}(\iii)&=\prod_{j=1}^k \left\|\pi_j(g_{\iii} g_{\kkk_{t_0}} g_{\kkk_{t_0}}^{-1})|_{W_j}\right\|^{\beta_j}\leq  \prod_{j=1}^k \left\|\pi_j(g_{\iii \kkk_{t_0}})|_{\pi_j(g_{\kkk_{t_0}}^{-1})W_j}\right\|^{\beta_j}\prod_{j=1}^k \left\|\pi_j(g_{\kkk_{t_0}}^{-1})|_{W_j}\right\|^{\beta_j} \\
&\leq \left( \prod_{j=1}^k \left\|\pi_j(g_{\iii \kkk_{t_0}})|_{\pi_j(g_{\kkk_{t_0}}^{-1})W_j}\right\|^{\beta_j} \right)\left(\max_{1 \leq t \leq r}\prod_{j=1}^k \left\|\pi_j(g_{\kkk_{t}}^{-1})|_{W_j}\right\|^{\beta_j} \right) \\
 &\leq C \Phi^{\mathcal{W}_0}(\iii \kkk_{t_0}) \leq KC\Phi^{\det}(\iii \kkk_{t_0})\\
 & \leq KC \left(\max_{1 \leq t\leq r} \Phi^{\det}(\kkk_t)\right)\Phi^{\det}(\iii)\leq C_5\Phi^{\det}(\iii),\end{align*}
where we took $C:=\max_{1 \leq t \leq r}\prod_{j=1}^k \left\|\pi_j(g_{\kkk_{t}}^{-1})|_{W_j}\right\|^{\beta_j}$ and $C_5:=KC\max_{1 \leq t\leq r} \Phi^{\det}(\kkk_t)$. This proves the claim. 
 
\subsubsection{The Gibbs property and a third inequality between potentials}
 
The two inequalities just proved assert that for some $C>0$
\begin{equation}\label{eq:three-potentials}C^{-1} \Phi^{\mathcal{W}_0}(\iii) \leq \Phi^{\det}(\iii) \leq C\Phi^{\mathcal{W}}(\iii)\end{equation} 
 for all $\iii \in \Sigma_N^*$ and all transitivity classes $\mathcal{W}$. It follows directly that 
 \[P(\Phi)=P(\Phi^{\mathcal{W}_0})\leq P(\Phi^{\det})\leq P(\Phi^{\mathcal{W}}) \leq P(\Phi)  \]
for all transitivity classes $\mathcal{W}$, and we have proved the identity \eqref{eq:pressures-same}: $P(\Phi)=P(\Phi^{\mathcal{W}})=P(\Phi^{\det})$ for all transitivity classes $\mathcal{W}$.

We may now prove that $\mu$ is the equilibrium state of $\Phi^{\mathcal{W}}$ for \emph{every} transitivity class $\mathcal{W}$, and is also the equilibrium state of $\Phi^{\det}$. Indeed, for each transitivity class $\mathcal{W}$ the inequality \eqref{eq:three-potentials} yields
 \[\Lambda(\Phi^{\mathcal{W}_0},\mu)\leq  \Lambda(\Phi^{\det},\mu) \leq \Lambda(\Phi^{\mathcal{W}},\mu)\]
 and therefore
 \begin{align*}P(\Phi) = P(\Phi^{\det}) = P(\Phi^{\mathcal{W}_0})
 & =  h(\mu)+\Lambda(\Phi^{\mathcal{W}_0},\mu) \leq     h(\mu)+ \Lambda(\Phi^{\det},\mu)\\
  & \leq h(\mu)+\Lambda(\Phi^{\mathcal{W}},\mu) \leq P(\Phi^{\mathcal{W}})= P(\Phi)\end{align*}
 so that
 \[P(\Phi^{\mathcal{W}}) = h(\mu)+\Lambda(\Phi^{\mathcal{W}},\mu) \qquad \text{and} \qquad P(\Phi^{\det}) = h(\mu)+\Lambda(\Phi^{\det},\mu)\]
 as required for $\mu$  to be an equilibrium state of $\Phi^{\mathcal{W}}$ and $\Phi^{\det}$ respectively.

We now make further use of the Gibbs inequality. Each $\Phi^{\mathcal{W}}$ has a unique equilibrium state and satisfies the Gibbs inequality with respect to that equilibrium state, and the equilibrium state of each such potential is $\mu$. The same remarks apply to $\mu$ and the potential $\Phi^{\det}$. Therefore there exists $C_6>0$ such that 
\[C_6^{-1} \leq \frac{\Phi^{\mathcal{W}}(\iii)}{e^{-|\iii|P(\Phi^{\mathcal{W}})} \mu([\iii])} = \frac{\Phi^{\mathcal{W}}(\iii)}{e^{-|\iii|P(\Phi)} \mu([\iii])} \leq C_6\]
for all $\iii \in \Sigma_N^*$ and all transitivity classes $\mathcal{W}$, and also 
\[C_6^{-1} \leq \frac{\Phi^{\det}(\iii)}{e^{-|\iii|P(\Phi^{\det})} \mu([\iii])} = \frac{\Phi^{\det}(\iii)}{e^{-|\iii|P(\Phi)} \mu([\iii])} \leq C_6\]
for all $\iii \in\Sigma_N^*$. We deduce the inequality $\Phi^{\mathcal{W}}(\iii) \leq C_6^2 \Phi^{\det}(\iii)$ for all $\iii \in \Sigma_N^*$ and transitivity classes $\mathcal{W}$.

 \subsubsection{A final determinant identity}

Let $\iii \in \Sigma_N^*$ such that $g_\iii \in G^o$. We have
\begin{align*}&\max_{(W_j)_{j=1}^k \in \mathcal{W}} \prod_{j=1}^k \left|\det \left(\pi_j(g_\iii)|_{W_j}\right)\right|^{\frac{\beta_j}{\ell_j}} \leq \max_{(W_j)_{j=1}^k \in \mathcal{W}} \prod_{j=1}^k \left\|\pi_j(g_\iii)|_{W_j}\right\|^{\beta_j}\\
&=\Phi^{\mathcal{W}}(\iii)\leq C_6^2\Phi^{\det}(\iii)=C_6^2\prod_{j=1}^k |\det \pi_j(g_\iii)|^{\frac{\beta_j}{d_j}}\\
&=C_6^2\left(\prod_{(W_j)_{j=1}^k \in \mathcal{W}} \prod_{j=1}^k \left|\det \left(\pi_j(g_\iii)|_{W_j}\right)\right|^{\frac{\beta_j}{\ell_j}} \right)^{\frac{1}{\#\mathcal{W}}}\\
&\leq C_6^2\left(\min_{(W_j)_{j=1}^k \in \mathcal{W}} \prod_{j=1}^k \left|\det \left(\pi_j(g_\iii)|_{W_j}\right)\right|^{\frac{\beta_j}{\ell_j}} \right)^{\frac{1}{\#\mathcal{W}}}\\
&\qquad\cdot\left(\max_{(W_j)_{j=1}^k \in \mathcal{W}} \prod_{j=1}^k \left|\det \left(\pi_j(g_\iii)|_{W_j}\right)\right|^{\frac{\beta_j}{\ell_j}}\right)^{\frac{\#\mathcal{W}-1}{\#\mathcal{W}}} \end{align*}
and we obtain
\[\max_{(W_j)_{j=1}^k \in \mathcal{W}} \prod_{j=1}^k \left|\det \left(\pi_j(g_\iii)|_{W_j}\right)\right|^{\frac{\beta_j}{\ell_j}}\leq C_6^{2(\#\mathcal{W})} \min_{(W_j)_{j=1}^k \in \mathcal{W}} \prod_{j=1}^k \left|\det \left(\pi_j(g_\iii)|_{W_j}\right)\right|^{\frac{\beta_j}{\ell_j}}\]
for all transitivity classes $\mathcal{W}$ and all $g_\iii \in G^o$. It follows that if $(W_j')_{j=1}^k$ is any element of any transitivity class $\mathcal{W}$, then for every $g_\iii \in G^o$ 
\begin{align*}\prod_{j=1}^k \left|\det \left(\pi_j(g_\iii)|_{W_j'}\right)\right|^{\frac{\beta_j}{\ell_j}}& \geq \min_{(W_j)_{j=1}^k \in \mathcal{W}} \prod_{j=1}^k \left|\det \left(\pi_j(g_\iii)|_{W_j}\right)\right|^{\frac{\beta_j}{\ell_j}}\\
 &\geq C_6^{-2(\#\mathcal{W})} \max_{(W_j)_{j=1}^k \in \mathcal{W}} \prod_{j=1}^k \left|\det \left(\pi_j(g_\iii)|_{W_j}\right)\right|^{\frac{\beta_j}{\ell_j}}\\
&\geq C_6^{-2(\#\mathcal{W})} \left(\prod_{(W_j)_{j=1}^k \in \mathcal{W}} \left(\prod_{j=1}^k \left|\det \left(\pi_j(g_\iii)|_{W_j}\right)\right|^{\frac{\beta_j}{\ell_j}}\right)\right)^{\frac{1}{\#\mathcal{W}}}\\
&=C_6^{-2(\#\mathcal{W})}\prod_{j=1}^k |\det \pi_j(g_\iii)|^{\frac{\beta_j}{d_j}}\end{align*}
where we have used \eqref{eq:dets-again} again, and from the preceding chain of inequalities
\[\prod_{j=1}^k \left|\det \left(\pi_j(g_\iii)|_{W_j'}\right)\right|^{\frac{\beta_j}{\ell_j}}\leq \max_{(W_j)_{j=1}^k \in \mathcal{W}} \prod_{j=1}^k \left|\det \left(\pi_j(g_\iii)|_{W_j}\right)\right|^{\frac{\beta_j}{\ell_j}} \leq C_6^2\prod_{j=1}^k |\det \pi_j(g)|^{\frac{\beta_j}{d_j}}.\]
We have found that if $\iii \in \Sigma_N^*$ such that $g_\iii \in G^o$, $\mathcal{W}$ is any transitivity class and $(W_j)_{j=1}^k$ any element of $\mathcal{W}$
\[C_6^{-2(\#\mathcal{W})}\prod_{j=1}^k |\det \pi_j(g_\iii)|^{\frac{\beta_j}{d_j}} \leq  \prod_{j=1}^k \left|\det \left(\pi_j(g_\iii)|_{W_j}\right)\right|^{\frac{\beta_j}{\ell_j}}\leq C_6^2\prod_{j=1}^k |\det \pi_j(g_\iii)|^{\frac{\beta_j}{d_j}}.\]
Applying this estimate to $g_{\iii^n}=g_\iii^n$ in place of $g_\iii$, taking the power $\frac{1}{n}$ and letting $n \to \infty$ yields
\begin{equation}\label{eq:dets-same-2}\prod_{j=1}^k |\det \pi_j(g_\iii)|^{\frac{\beta_j}{d_j}} = \prod_{j=1}^k \left|\det \left(\pi_j(g_\iii)|_{W_j}\right)\right|^{\frac{\beta_j}{\ell_j}}\end{equation}
for every $g_\iii \in G^o$ and every $(W_j)$ in any transitivity class. 

\subsubsection{Conclusion of the proof}

The equation \eqref{eq:dets-same-2} suffices to yield \eqref{eq:dets-same}. Fix $j_0 \in \{1,\ldots,k\}$ and $1 \leq i_1,i_2 \leq n_{j_0}$. Let $W_{j_0}:=U_{j_0}^{i_1}$ and $W_{j_0}':=U_{j_0}^{i_2}$, and for $j \neq j_0$, set $W_j:=U_j^1$ and $W_j':=U_j^1$. Applying \eqref{eq:dets-same-2} gives
\[\frac{\left|\det\left(\pi_{j_0}(g_\iii)|_{U_{j_0}^{i_1}}\right)\right|^{\frac{\beta_{j_0}}{\ell_{j_0}}}}{\left|\det\left(\pi_{j_0}(g_\iii)|_{U_{j_0}^{i_2}}\right)\right|^{\frac{\beta_{j_0}}{\ell_{j_0}}}} = \frac{\prod_{j=1}^k \left|\det \left(\pi_j(g_\iii)|_{W_j}\right)\right|^{\frac{\beta_j}{\ell_j}}}{\prod_{j=1}^k \left|\det \left(\pi_j(g_\iii)|_{W_j'}\right)\right|^{\frac{\beta_j}{\ell_j}}} =\frac{\prod_{j=1}^k |\det \pi_j(g_\iii)|^{\frac{\beta_j}{d_j}} }{\prod_{j=1}^k |\det \pi_j(g_\iii)|^{\frac{\beta_j}{d_j}} } =1\]
for every $g_\iii \in G^o$. Hence for every $g_\iii \in G^o$ and every $j \in \{1,\ldots,k\}$,
\[\left|\det\left(\pi_j(g_\iii)|_{U_j^{i}}\right)\right|^{\frac{1}{\ell_j}}\]
is independent of $i \in \{1,\ldots,n_j\}$ and in particular must be equal to its geometric mean with respect to $i \in \{1,\ldots,n_j\}$, which by \eqref{eq:dets-ad-mortem} is $\left|\det \pi_j(g_\iii)\right|^{1/d_j}$.
This establishes \eqref{eq:dets-same} which in turn allows us to readily conclude. Indeed, together with \eqref{eq.matrix.form}, it implies that for every $g \in G^o$ and $j\in \{1,\ldots,k\}$, $\pi_j(g)=|\det(\pi_j(g))|^\frac{1}{d_j}O_j(g)$ where $O_j(g) \in O(V_j)$ for some Euclidean structure on $V_j$ not depending on $g$. Therefore 
\[\left\{\left| \det \pi_j(g)\right|^{-\frac{1}{d_j}} \pi_j(g)\colon g \in G^o\right\}\]
is a compact subgroup of $\GL(V_j)$ and since the index $[G:G^o]$ is finite, the same is true of
\[\left\{\left| \det \pi_j(g)\right|^{-\frac{1}{d_j}} \pi_j(g)\colon g \in G\right\}.\]
The proof is complete.


%
%

\section{Proof of Theorem \ref{th:main-tech}}\label{se:gen-case}

Let $(A_1,\ldots,A_N) \in \GL_d(\mathbb{R})^N$ be irreducible and let $\alpha_1 \geq \cdots \geq \alpha_d\geq 0$ with $\alpha_1>\alpha_d$.  Let $G\leq \GL_d(\mathbb{R})$ denote the Zariski closure of the subsemigroup of $\GL_d(\mathbb{R})$ generated by $A_1,\ldots,A_N$; it is a real reductive group (\S \ref{subsub.reductive.irred}). Define $\alpha_{d+1}:=0$ and let $k_1,\ldots,k_r$ be the list of all integers $i \in \{1,\ldots,d\}$ for which the difference $\alpha_i-\alpha_{i+1}$ is positive, where $k_1<\cdots<k_r$. We observe that since $\alpha_1>\alpha_d$ we have $r \neq 0$ and also $k_1<d$. Define $\beta_j:=\alpha_{k_j}-\alpha_{1+k_{j}}>0$ for each $j=1,\ldots,r$, and for each $j=1,\ldots,r$ let $\pi_j \colon G \to \GL(\wedge^{k_j}\mathbb{R}^d)$ denote the exterior power representation $\pi_j(g):=g^{\wedge k_j}$. We have
\[\prod_{j=1}^d \sigma_j(g)^{\alpha_j} =\prod_{j=1}^d \left(\prod_{i=1}^j \sigma_i(g)\right)^{\alpha_j-\alpha_{j+1}}=\prod_{j=1}^d \left\|g^{\wedge j}\right\|^{\alpha_j-\alpha_{j+1}}= \prod_{j=1}^r \left\|\pi_j(g)\right\|^{\beta_j}\]
for every $g \in G$, and in particular the potential $\Phi$ defined in the statement of the theorem satisfies the description
\[\Phi(\iii)=\prod_{j=1}^r \left\|\pi_j(A_\iii)\right\|^{\beta_j}.\]

Since the representations $\pi_j \colon G\to \GL(\wedge^{k_j}\mathbb{R}^d)$ are not in general irreducible, Theorem \ref{th:irreducible-case} is not directly applicable to the potential $\Phi$. We will study $\Phi$ by writing it as the maximum of a finite collection of simpler potentials to which Theorem \ref{th:irreducible-case} may be applied. Since $G$ is reductive, the rational representations $\pi_j$'s are completely reducible (\S \ref{subsub.reductive.irred}), in other words, for each $j=1,\ldots,r$ we may write $\wedge^{k_j}\mathbb{R}^d=V_1^j \oplus \cdots \oplus V_{n_j}^j$ where each $V_i^j$ is an invariant subspace of the group $\pi_j(G)$ on which $\pi_j(G)$ acts irreducibly. For each $j=1,\ldots,r$ and $1 \leq \ell \leq n_j$ define an irreducible representation $\pi_{j,\ell}\colon G \to \GL(V_\ell^j)$ by $\pi_{j,\ell}(g):=\pi_j(g)|_{V_\ell^j}$ for all $g \in G$. Let $\mathfrak{L}$ denote the set of all tuples of integers $\mathfrak{l}=(\ell_1,\ldots,\ell_r)$ such that $1 \leq \ell_j \leq n_j$ for each $j=1,\ldots,r$. For each $\mathfrak{l}=(\ell_1,\ldots,\ell_r) \in \mathfrak{L}$ define a potential $\Phi_{\mathfrak{l}} \colon \Sigma_N^* \to (0,+\infty)$ by
\begin{equation}\label{eq:frakpotential1} \Phi_{\mathfrak{l}}(\iii):= \prod_{j=1}^r \left\|\pi_j(A_\iii)|_{V_{\ell_j}^j}\right\|^{\beta_j} = \prod_{j=1}^r \left\|\pi_{j,\ell_j}(A_\iii)\right\|^{\beta_j}.\end{equation}
For each fixed $\mathfrak{l}=(\ell_1,\ldots,\ell_r)$ the representations $\pi_{j,\ell_j}$ for $j=1,\ldots,r$ are irreducible, so each $\Phi_{\mathfrak{l}}$ satisfies the hypotheses of Theorem \ref{th:irreducible-case}. Clearly we also have 
\begin{align}\label{eq:l-max}\Phi(\iii)=\prod_{j=1}^r \left\|\pi_j(A_\iii)\right\|^{\beta_j} &=\prod_{j=1}^r \max_{1 \leq \ell \leq n_j} \left\|\pi_j(A_\iii)|_{V_\ell^j}\right\|^{\beta_j}\\\nonumber
& =\max_{(\ell_1,\ldots,\ell_r)\in \mathfrak{L}} \prod_{j=1}^r \left\|\pi_j(A_\iii)|_{V_{\ell_j}^j}\right\|^{\beta_j} = \max_{\mathfrak{l} \in \mathfrak{L}} \Phi_{\mathfrak{l}}(\iii) \end{align}
for every $\iii \in \Sigma_N^*$. We will find it helpful to define further potentials as follows. For each $\mathfrak{l}=(\ell_1,\ldots,\ell_r) \in \mathfrak{L}$ define
\begin{equation}\label{eq:frakpotential2} \Phi^{\det}_{\mathfrak{l}}(\iii):= \prod_{j=1}^r \left|\det \left(\pi_{j,\ell_j}(A_\iii)\right)\right|^{\frac{\beta_j}{\dim V_{\ell_j}^j}}\end{equation}
for all $\iii \in \Sigma_N^*$. Define also 
\[\Phi^{\det}(\iii) = \prod_{j=1}^r \left|\det A_\iii \right|^{\frac{k_j\beta_j}{d}},\]
for all $\iii \in \Sigma_N^*$.

Our strategy in proving Theorem \ref{th:main-tech} will be to establish the identity
\begin{equation}\label{eq:pressure-goal}P(\Phi_{\mathfrak{l}}) =P(\Phi_{\mathfrak{l}}^{\det})\end{equation}
for all $\mathfrak{l} \in \mathfrak{L}$. This will permit the implication (ii)$\implies$(iii) of Theorem \ref{th:irreducible-case} to be applied, establishing that each of the groups $\pi_{j,\ell}(G)$ is compact modulo factoring out the determinant. The compactness of each $\pi_j(G)$ modulo factoring out the determinant will then follow via some additional bookkeeping to ensure that for each $j=1,\ldots,r$ the determinant which is factored out of the representation $\pi_{j,\ell}$ is consistent across all $\ell \in \{1,\ldots,n_j\}$, and the compactness of $G$ modulo factoring out the determinant will follow by some simple manipulations involving singular values.

Much as in the second half of the proof of Theorem \ref{th:irreducible-case}, before commencing the proof of \eqref{eq:pressure-goal} we must first establish an identity involving determinants. The proof of this identity is relatively long and comprises a large proportion of this section. Specifically, we make the following claim: for every $j=1,\ldots,r$, for all $\ell=1,\ldots,n_j$ we have  
\begin{equation}\label{eq:dets}\left|\det \left(\pi_{j,\ell}(g)\right)\right|^{\frac{1}{\dim V_{\ell}^j}} =  \left|\det g \right|^{\frac{k_j}{d}} \end{equation}
for all $g \in G$.

To prove the claim it is sufficient for us to establish \eqref{eq:dets} for all $g \in G^o$, since if this has been proven then for any given $g \in G$ we have $g^n \in G^o$ for some integer $n \geq 1$ and hence clearly
\[\left|\det \left(\pi_{j,\ell}(g)\right)\right|^{\frac{1}{\dim V_{\ell}^j}} =\left|\det \left(\pi_{j,\ell}(g^n)\right)\right|^{\frac{1}{n\cdot \dim V_{\ell}^j}}    = \left|\det (g^n) \right|^{\frac{k_j}{nd}} =\left|\det g \right|^{\frac{k_j}{d}}   \]
as required. We therefore restrict our attention to the task of proving \eqref{eq:dets} for all $g \in G^o$. To this end let us fix $j$ and $\ell$ and define a continuous group homomorphism $\hat\pi$ from $G^o$ to the multiplicative group of positive real numbers by $\hat\pi(g):=|\det \left(\pi_{j,\ell}(g)\right)|^{1/k_j \cdot \dim V_\ell^j}$. 
Our objective is now to show that $\hat\pi(g)=|\det g|^{1/d}$ for all $g \in G^o$. The set of all $g \in G^o$ satisfying this equation is obviously a group, and this set obviously includes $[G^o,G^o]$ as a subset since by the commutativity of real multiplication we have $\hat\pi(g)=1=|\det g|^{1/d}$ for all $g \in [G^o,G^o]$. Since $G^o$ is equal to an almost direct product of $Z(G^o)$ and $[G^o,G^o]$, the claim will therefore follow if we can prove that $\hat\pi(z)=|\det z|^{1/d}$ for all $z\in Z(G^o)$. 

We begin this process by analysing the action of $Z(G^o)$ on $\R^d$. By Clifford's  \cite[Theorem 1.7]{wehrfritz} applied to the irreducible group $G \leq \GL_d(\R)$ and its normal subgroup $G^o$, we obtain a direct sum decomposition $\R^d=X_1 \oplus \ldots \oplus X_p$ consisting of the homogeneous (isotypic) components of the $G^o$-representation. By the same result of Clifford the collection of subspaces $X_i$ are permuted by the component group $G/G^o$. In particular these subspaces all have the same dimension, which we denote by $m \in \N$. (Here of course we have  $m=d/p$.) Decomposing each $X_i$ into a sum of irreducible subspaces for the $G^o$-action and using Schur's lemma, we deduce that there exists an inner product structure on each $X_i$ with respect to which every $g \in Z(G^o)$ acts on $X_i$ by a similarity transformation.

Now, by \cite[Proposition 8.15]{borel.book} there exist a maximal compact subgroup $Z(G^o)_A$ and a maximal real diagonalisable subgroup $Z(G^o)_D$ of $Z(G^o)$ such that $Z(G^o)_A \cap Z(G^o)_D$ is finite and $Z(G^o)= Z(G^o)_D Z(G^o)_A$, which is to say $Z(G^o)$ is an almost direct product of the subgroups $Z(G^o)_D$ and $Z(G^o)_A$. 
The group $\hat\pi(Z(G^o)_A)$ is a compact subgroup of the positive reals and hence is equal to $\{1\}$, and similarly the image of $Z(G^o)_A$ under the homomorphism $z \mapsto |\det z|^{1/d}$ must also equal $\{1\}$, so we have $\hat\pi(z)=|\det z|^{1/d}$ for all $z \in Z(G^o)_A$. Hence  the claim will be proved if we can show that $\hat\pi(z)=|\det z|^{1/d}$ for every $z \in Z(G^o)_D$. 

Since each $X_i$ is a sum of isomorphic irreducible representations of $G^o$, it follows from Schur's lemma and (real) diagonalisability that every $z \in Z(G^o)_D$ acts on each $X_i$ by a scalar transformation $v \mapsto \gamma_i(z)v$ for some nonzero real number $\gamma_i(z)$ for $i=1,\ldots,p$.
On the other hand, since $V^j_\ell$ is $Z(G^o)_D$-invariant and since $Z(G^o)_D$ is abelian, $V^j_\ell$ writes as a direct sum of $Z(G^o)_D$-irreducible subspaces in $\wedge^{k_j}\mathbb{R}^d$. But $Z(G^o)_D$ is also a split torus, and therefore so is its image in the exterior power representations. Hence, these $Z(G^o)_D$-irreducible subspaces of $V^j_\ell$ are $1$-dimensional subspaces. Each gives rise to a character of $Z(G^o)_D$ of the form $\gamma_1(z)^{t_1}\cdots \gamma_p(z)^{t_p}$ for some non-negative integers $t_1,\ldots,t_p$ whose sum is equal to $k_j$. 
The quantity $\det \pi_{j,\ell}(z)=\det z^{\wedge k_j}|_{V_\ell^j}$ is a product of precisely $\dim V_\ell^j$ such characters, so it has the form $\gamma_1(z)^{t_1'}\cdots \gamma_p(z)^{t_p'}$ for some non-negative integers $t_1',\ldots,t_p'$ such that $\sum_{i=1}^p t_p'=k_j\cdot \dim V_\ell^j$. Taking the absolute value and raising to the power $1/(k_j\cdot \dim V_\ell^j)$ as in the definition of $\hat\pi$, it follows that there exist non-negative rational numbers $r_1,\ldots,r_p$ such that $\hat\pi(z)=|\gamma_1(z)|^{r_1}\cdots |\gamma_p(z)|^{r_p}$ for all $z \in Z(G^o)_D$ and such that $\sum_{i=1}^p r_i=1$. On the other hand clearly $\det z =\gamma_1(z)^m\cdots \gamma_p(z)^m$ for every $z \in Z(G^o)_D$ since $\mathbb{R}^d=\bigoplus_{i=1}^p X_i$ and $\det (z|_{X_i})=\gamma_i(z)^m$ for every $i=1,\ldots,p$, where we recall that $m=d/p$ is the dimension of each of the spaces $X_i$. Hence $|\det z|^{1/d} = |\gamma_1(z)\cdots \gamma_p(z)|^{1/p}$ for all $z \in Z(G^o)_D$.

Now, if $z \in Z(G^o)_D$ and $g \in G$ then $gzg^{-1}$ also belongs to $Z(G^o)$ and also acts on each $X_i$ by a scalar transformation, which by the maximality of $Z(G^o)_D$ as a real diagonalisable subgroup of $Z(G^o)$ implies $gzg^{-1} \in Z(G^o)_D$. For every $[g] \in G/G^o$ there exists a permutation $\varsigma$ of $\{1,\ldots,p\}$ such that $gX_i = X_{\varsigma(i)}$ for every $i =1,\ldots,p$ and every $g \in [g]$, and the corresponding element $gzg^{-1}$ of $Z(G^o)_D$ satisfies $\gamma_i(gzg^{-1}) = \gamma_{\varsigma(i)}(z)$ for all $i=1,\ldots,p$.  For each $i \in \{1,\ldots,p\}$ the transitivity of the action of $G/G^o$ on $X_1,\ldots,X_p$ implies that the sets $\left\{[g]\in G/G^o \colon gX_{i}=X_j\right\}$
for $j=1,\ldots,p$ form a partition of $G/G^o$ into cosets of equal cardinality $(\#G/G^o)/p$ and therefore
\begin{align}\label{eq:more-determinant-like-stuff}\prod_{[g]\in G/G^o} |\gamma_{i}(gzg^{-1})| &=\prod_{j=1}^p \left(\prod_{\substack{[g]\in G/G^o \\ gX_i=X_j}} |\gamma_j(z)| \right)= \left(\prod_{j=1}^p |\gamma_j(z)|\right)^{\frac{\#G/G^o}{p}}=|\det z|^{\frac{\#G/G^o}{d}} \end{align}
for each $i=1,\ldots,p$ and $z \in Z(G^o)_D$. We obviously have $\hat\pi(gzg^{-1})=\hat\pi(z)$ for every $z \in Z(G^o)_D$ and $g \in G$ by the commutativity of real multiplication. Hence for every $z \in Z(G^o)_D$
\begin{align*}\hat\pi(z)&=\left(\prod_{[g]\in G/G^o} \hat\pi(gzg^{-1})\right)^{\frac{1}{\#G/G^o}}= \left(\prod_{[g] \in G/G^o}\prod_{i=1}^p  |\gamma_i(gzg^{-1})|^{r_i }\right)^{\frac{1}{\#G/G^o}}\\
& = \prod_{i=1}^p\left(\prod_{[g] \in G/G^o}  |\gamma_i(gzg^{-1})|\right)^{\frac{r_i}{ \#G/G^o}}=\prod_{i=1}^p |\det z|^{\frac{r_i}{d}} = |\det z|^{\frac{1}{d}}\end{align*}
where we have used \eqref{eq:more-determinant-like-stuff} and the equation $r_1+\cdots +r_p=1$. We have obtained $\hat\pi(z)=|\det z|^{1/d}$ for all $z \in Z(G^o)_D$ and we deduce that the claimed identity \eqref{eq:dets} is valid for every $g \in G$ as required.


We may now return to the main direction of the proof.  Our first step towards the desired identity \eqref{eq:pressure-goal} is to observe that
\[P(\Phi)\geq \max_{\mathfrak{l} \in \mathfrak{L}} P(\Phi_{\mathfrak{l}})\]
as a direct consequence of  \eqref{eq:l-max} together with the definition of the pressure. Furthermore, for each $\mathfrak{l} \in \mathfrak{L}$ we have $\Phi_{\mathfrak{l}}(\iii) \geq \Phi_{\mathfrak{l}}^{\det}(\iii)$ for all $\iii \in \Sigma_N^*$. This follows by comparing \eqref{eq:frakpotential1} and \eqref{eq:frakpotential2} and using the elementary inequality $|\det B| \leq \|B\|^{\dim V}$ for all $B \in \GL(V)$, and it entails that $P(\Phi_{\mathfrak{l}}) \geq P(\Phi_{\mathfrak{l}}^{\det})$ for every $\mathfrak{l} \in \mathfrak{L}$. We have thus far obtained
\begin{equation}\label{eq:so-far}P(\Phi)\geq P(\Phi_{\mathfrak{l}})\geq P(\Phi_{\mathfrak{l}}^{\det})\end{equation}
for every $\mathfrak{l} \in \mathfrak{L}$.

Using the identity \eqref{eq:dets}, we immediately deduce that $\Phi_{\mathfrak{l}}^{\det}(\iii)=\Phi^{\det}(\iii)$ for all $\iii \in \Sigma_N^*$ simply by applying the equation \eqref{eq:dets} to the definition of the two potentials. Combining this observation with \eqref{eq:so-far} it follows that
\begin{equation}\label{eq:so-far-2}P(\Phi)\geq P(\Phi_{\mathfrak{l}})\geq P(\Phi_{\mathfrak{l}}^{\det})=P(\Phi^{\det})\end{equation}
for every $\mathfrak{l} \in \mathfrak{L}$.

Let us now show that $P(\Phi)=P(\Phi^{\det})$. By hypothesis there exists a Bernoulli measure $\mu$ which satisfies $h(\mu)+\Lambda(\Phi,\mu)=P(\Phi)$. Since $\mu$ is Bernoulli, it is ergodic, so by the subadditive ergodic theorem we have for $\mu$-a.e. $x\in \Sigma_N$
\begin{align*}\Lambda(\Phi,\mu) &= \lim_{n \to \infty} \frac{1}{n}\log \Phi(x|_n) = \lim_{n \to \infty}  \frac{1}{n}\log \max_{\mathfrak{l} \in \mathfrak{L}} \Phi_{\mathfrak{l}}(x|_n)\\
&= \max_{\mathfrak{l} \in \mathfrak{L}} \lim_{n \to \infty}  \frac{1}{n}\log  \Phi_{\mathfrak{l}}(x|_n)=\max_{\mathfrak{l} \in \mathfrak{L}} \Lambda(\Phi_{\mathfrak{l}},\mu).\end{align*}
Thus $P(\Phi)=h(\mu)+\Lambda(\Phi_{\mathfrak{l}_0},\mu)$ for some particular $\mathfrak{l}_0 \in \mathfrak{L}$, and therefore 
\[P(\Phi)\geq \max_{\mathfrak{l} \in \mathfrak{L}}P(\Phi_{\mathfrak{l}}) \geq P(\Phi_{\mathfrak{l}_0}) \geq h(\mu)+\Lambda(\Phi_{\mathfrak{l}_0},\mu) = P(\Phi)\]
where we have used the subadditive variational principle in the third inequality. We conclude that $P(\Phi)=P(\Phi_{\mathfrak{l}_0})$ and that $\mu$ is an equilibrium state of $\Phi_{\mathfrak{l}_0}$. By Theorem \ref{th:irreducible-case} applied to the potential $\Phi_{\mathfrak{l}_0}$ we have $P(\Phi_{\mathfrak{l}_0})=P(\Phi_{\mathfrak{l}_0}^{\det})$. We have seen already that $\Phi_{\mathfrak{l}_0}^{\det}$ is identically equal to $\Phi^{\det}$, so
\[P(\Phi^{\det}) = P(\Phi_{\mathfrak{l}_0}^{\det}) = P(\Phi_{\mathfrak{l}_0}) =P(\Phi)\geq P(\Phi_{\mathfrak{l}})\geq P(\Phi_{\mathfrak{l}}^{\det})=P(\Phi^{\det})\]
for every $\mathfrak{l} \in \mathfrak{L}$, where we have invoked \eqref{eq:so-far-2}.

 We have now established the desired identity 
 \[P(\Phi_{\mathfrak{l}}) =P(\Phi_{\mathfrak{l}}^{\det})\]
 for every $\mathfrak{l} \in \mathfrak{L}$. Since every $\Phi_{\mathfrak{l}}$ satisfies the hypotheses of Theorem \ref{th:irreducible-case} it follows from the implication (ii)$\implies$(iii) of that theorem that for each $\mathfrak{l}=(\ell_1,\ldots,\ell_r) \in \mathfrak{L}$, for every $j=1,\ldots,r$ the group
\begin{align*}\left\{\left|\det \left(\pi_{j,\ell_j}(g)\right)\right|^{-\frac{1}{\dim V_{\ell_j}^j}} \pi_{j,\ell_j}(g) \colon g \in G\right\} &=\left\{\left|\det g\right|^{-\frac{k_j}{d}} g^{\wedge k_j}|_{V_{\ell_j}^j} \colon g \in G\right\}\\
&=\left\{\left( |\det g|^{-\frac{1}{d}} g\right)^{\wedge k_j}|_{V_{\ell_j}^j} \colon g \in G\right\}\end{align*}
is compact, where we have again used \eqref{eq:dets}. Since $\mathfrak{l}$ is arbitrary we deduce that the group
\[\left\{(|\det g|^{-\frac{1}{d}}g)^{\wedge k_j} \colon g \in G\right\}\]
is compact for every $j=1,\ldots,r$. In particular it is compact for $j=1$, so there exists $K>0$ such that for every $g \in G$ we have $\|(|\det g|^{-\frac{1}{d}}g)^{\wedge k_1}\|\leq K$.  

Let $g \in G$ and define $h:=|\det g|^{-1/d}g$. We observed at the beginning of the proof that $k_1<d$. 
Since $1=|\det h|=\sigma_1(h)\cdots \sigma_d(h)$ we have
\begin{align*}\|h\|=\sigma_1(h) &=\sigma_2(h)^{-1}\cdots \sigma_d(h)^{-1}=\sigma_1(h^{-1})\cdots \sigma_{d-1}(h^{-1})\\
&\leq \left(\sigma_1(h^{-1})\cdots \sigma_{k_1}(h^{-1})\right)^{\frac{d-1}{k_1}}=\|(h^{-1})^{\wedge k_1}\|^{\frac{d-1}{k_1}} \leq K^{\frac{d-1}{k_1}}\end{align*}
where we have used $k_1 \leq d-1$ in order to pass from the first line to the second. The same reasoning obviously applies to $h^{-1}$, and we conclude that the group
\[\left\{|\det g|^{-\frac{1}{d}} g \colon g \in G\right\}\leq \GL_d(\mathbb{R})\]
is contained in the compact set
\[\left\{h \in \GL_d(\mathbb{R}) \colon \max\{\|h\|,\|h^{-1}\|\} \leq K^{\frac{d-1}{k_1}}\right\}\]
and hence is compact. Since obviously that group contains all of the linear maps $|\det A_i|^{-1/d}A_i$ the theorem is proved.

%
%

\appendix

\section{Equilibrium states of linear Cartan potentials} Our main technical results, Theorem \ref{th:main-tech} and \ref{th:irreducible-case}, admit a counterpart that can be expressed in more intrinsic terms (with respect to the linear algebraic group given by the Zariski closure of the semigroup generated by the linear parts of the iterated function system). Beyond its relative elegance, as we shall see, this formulation will allow us to have an understanding on the structure of equilibrium states of matrix potentials from a representation theoretic perspective. To avoid further technicalities we will restrict our considerations here to Zariski connected real reductive groups (e.g.\ $\GL_d(\R)$).

We retain the notation used in \S \ref{se:rev} where we described some preliminary facts concerning real reductive groups. We will require some additional facts about the representation theory of those groups further to the exposition in \S \ref{se:rev}. As before we refer the reader to \cite{bq.book,borel.book,borel-tits,chevalley,knapp} for more detailed exposition of this theory. For a focused account containing all the material which we will require we suggest \cite[Sections 2 \& 3]{ggkw}.

Let $G$ be a Zariski connected real reductive group. Let us say that it is of non-compact type if the derived group $[G,G]$ is not compact. Let $K$ be a maximal compact subgroup with Lie algebra $\mathfrak{k}$ and $\mathfrak{g}=\mathfrak{k} \oplus \mathfrak{k}^\perp$ the orthogonal decomposition of $\mathfrak{g}$ with respect to the Killing form. Let $\mathfrak{a} < \mathfrak{k}^\perp$ be a Cartan subspace of $\mathfrak{g}$ and $\mathfrak{a}=\mathfrak{a}_S \oplus \mathfrak{a}_Z$ its decomposition into the semisimple and central parts (as defined in \S \ref{se:rev}). Let $\Sigma \subset \mathfrak{a}^\ast$ denote the non-zero (restricted) roots, $\Delta:=\{\overline{\alpha}_1, \ldots, \overline{\alpha}_{d_S}\} \subset \Sigma$ a choice of simple roots, where $d_S \in \N$ is the semisimple real rank of $G$. We denote by $\mathfrak{a}_S^+$ the salient cone given by $\mathfrak{a}_S \cap \mathfrak{a}^+$. 
We fix an inner product $(\cdot,\cdot)$ on $\mathfrak{a}$ extending the restriction of the Killing form to $\mathfrak{a}_S$ and satisfying $\mathfrak{a}_S^\perp=\mathfrak{a}_Z$. This induces an identification of $\mathfrak{a}$ with $\mathfrak{a}^\ast$ and we use the same notation to denote the corresponding inner product on $\mathfrak{a}^\ast$. We denote by $\mathcal{W}$ the set of restricted weights of $G$, which is equal to 
$$\left\{\omega \in \mathfrak{a}^\ast \; | \; 2\frac{(\omega,\alpha)}{(\alpha,\alpha)} \in \Z \; \; \forall \alpha \in \Sigma\right\}.$$ For a subspace $V \leq \mathfrak{a}$, denote by $V^0$ its annihilator subspace in $\mathfrak{a}^\ast$. For each $\alpha \in \Delta$, we denote by $\omega_{\alpha}$ or $\omega_i$ (when $\alpha=\alpha_i$) the corresponding fundamental weight, i.e.\ the weight in $\mathcal{W} \cap \mathfrak{a}_Z^0$ satisfying $2\frac{(\omega,\alpha)}{(\alpha,\alpha)}=\delta_{\alpha,\beta}$ for every $\beta \in \Delta$. The set $\{\omega_i\}$ is a basis of $\mathfrak{a}_Z^0$. By choosing linearly independent $\omega_{d_S+1},\ldots,\omega_{d} \in \mathfrak{a}_S^0$ (where $d \in \N$ is the real rank of $G$) complete $\{\omega_i \; | \; i=1, \ldots, d_S\}$ to a basis of $\mathfrak{a}^\ast$. Given $\phi \in \mathfrak{a}^\ast$ with $\phi=\sum_{i=1}^d c_i \omega_i$, write $\phi_S=\sum_{i=1}^{d_S} c_i \omega_i$ and $\phi_Z=\phi-\phi_S$. Let $\mathcal{C}$ be the cone in $\mathfrak{a}^\ast$ defined by
$$\mathcal{C}:=\{\phi \in \mathfrak{a}^\ast \; | \;  \phi_S\neq 0\text{ and }\phi_S \text{ has only non-negative coefficients in the basis} \; \{\omega_i\}\}.
$$
Finally, as before, we denote by $\kappa: G \to \mathfrak{a}^+$ the associated Cartan projection. Theorem \ref{th:main-tech} admits the following abstract articulation:


\begin{theorem}\label{thm.cartan.state}
Let $G$ be a Zariski connected real reductive group of non-compact type and $(g_1,\ldots,g_N)$ be a tuple of elements such that the semigroup generated by $\{g_1,\ldots,g_N\}$ is Zariski dense in $G$. Let $\Psi$ be a finite subset of $\mathcal{C}$. Then,  the potential 
$$
\Phi^\Psi(\iii)=\max_{\phi \in \Psi} e^{\phi(\kappa(g_\iii))}
$$
does not have a Bernoulli equilibrium state. 
\end{theorem}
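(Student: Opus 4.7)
The plan is to derive Theorem~\ref{thm.cartan.state} from Theorem~\ref{th:irreducible-case} by expressing each $\Phi^\phi(\iii) := e^{\phi(\kappa(g_\iii))}$ as a product of norms of $\R$-irreducible representations of $G$, and then arguing by contradiction. Suppose that $\mu$ is a Bernoulli equilibrium state for $\Phi^\Psi$. Each $\Phi^\phi$ is submultiplicative: the semisimple part $\phi_S$ is dominant (a nonnegative combination of fundamental weights), for which the standard subadditivity inequality for the Cartan projection holds, and the central part $\phi_Z$ produces the logarithm of an algebraic character of $G$, which is additive on $\kappa$. Since $\mu$ is ergodic, the subadditive ergodic theorem applied to each $\Phi^\phi$ and to $\Phi^\Psi = \max_{\phi\in\Psi}\Phi^\phi$ yields $\Lambda(\Phi^\Psi,\mu) = \max_{\phi \in \Psi}\Lambda(\Phi^\phi,\mu)$. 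Picking $\phi_0 \in \Psi$ attaining this maximum and combining the pointwise inequality $\Phi^{\phi_0} \le \Phi^\Psi$ with the variational principle applied to $\Phi^{\phi_0}$ gives $P(\Phi^{\phi_0}) \le P(\Phi^\Psi) = h(\mu)+\Lambda(\Phi^{\phi_0},\mu) \le P(\Phi^{\phi_0})$, so that $\mu$ is itself a Bernoulli equilibrium state of $\Phi^{\phi_0}$.

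Next I would rewrite $\Phi^{\phi_0}$ in the form required by Theorem~\ref{th:irreducible-case}. Decompose $\phi_0 = (\phi_0)_S + (\phi_0)_Z$ with $(\phi_0)_S = \sum_{i=1}^{d_S} c_i\omega_i$ and $c_i \ge 0$ not all zero (since $\phi_0 \in \mathcal{C}$). The functional $(\phi_0)_Z$ defines a regular algebraic character $\chi_0 : G \to \R^*$ with $\log|\chi_0(g)| = (\phi_0)_Z(\kappa(g))$. For each $i$ with $c_i > 0$, choose an $\R$-irreducible representation $(\pi_i,V_i)$ of $G$ whose highest restricted weight is a positive integer multiple $m_i\omega_i$ of $\omega_i$ (obtained as an $\R$-irreducible constituent of a suitable tensor power of the fundamental representation associated to $\omega_i$, possibly after Galois descent). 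The Cartan formula $\log\|\pi_i(g)\| = m_i\omega_i(\kappa(g)) + O(1)$ then implies that $\Phi^{\phi_0}(\iii)$ differs from the honest product potential $\widetilde{\Phi}(\iii) := |\chi_0(g_\iii)| \prod_{i : c_i > 0}\|\pi_i(g_\iii)\|^{c_i/m_i}$ by a multiplicative factor bounded uniformly in $\iii$. Such a bounded factor does not affect Lyapunov exponents, pressures, or equilibrium states, so $\mu$ is also a Bernoulli equilibrium state for $\widetilde{\Phi}$.

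Theorem~\ref{th:irreducible-case} now applies to $\widetilde{\Phi}$ (the $\pi_i$ and the one-dimensional representation $\chi_0$ are $\R$-irreducible, and the semigroup generated by $g_1,\ldots,g_N$ is Zariski dense in the Zariski connected $G$), and its implication (i)$\Rightarrow$(iii) shows that for each $i$ with $c_i > 0$ the group $\pi_i(G)$ is compact modulo $|\det|^{1/\dim V_i}$. Translating this compactness to the level of the Cartan projection, the functional $m_i\omega_i - \eta_i$ is bounded on $\kappa(G)$, where $\eta_i \in \mathfrak{a}_Z^*$ is the functional attached to $|\det\pi_i|^{1/\dim V_i}$; it belongs to $\mathfrak{a}_Z^*$ (not just $\mathfrak{a}^*$) because the algebraic character $\det\pi_i$ is trivial on $[G,G]$. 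Since $\tfrac{1}{n}\kappa(\gamma^n) \to \lambda(\gamma)$, this boundedness forces $(m_i\omega_i - \eta_i)(\lambda(\gamma)) = 0$ for every $\gamma$ in the semigroup $\Gamma$ generated by the $g_j$'s, and hence, by additivity over pairs, $m_i\omega_i - \eta_i$ also vanishes on every difference $\lambda(\gamma_1\gamma_2) - \lambda(\gamma_1) - \lambda(\gamma_2)$.

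The contradiction then comes directly from Theorem~\ref{thm.benoist.density}: the closed $\R$-span of those differences is exactly $\mathfrak{a}_S$, so $m_i\omega_i - \eta_i$ must vanish identically on $\mathfrak{a}_S$. But $\eta_i$ already vanishes on $\mathfrak{a}_S$ by construction while $m_i\omega_i$ is a positive multiple of a fundamental weight, strictly positive on the interior of $\mathfrak{a}_S^+$ which is nonempty precisely because $G$ is of non-compact type. Hence $m_i\omega_i - \eta_i = m_i\omega_i \ne 0$ on $\mathfrak{a}_S$, a contradiction. The principal technical obstacle is Step~2: securing $\R$-irreducible representations whose highest restricted weight is a positive multiple of each $\omega_i$, which requires some care with the real (rather than complex) representation theory of reductive groups and may force one to pass to tensor powers or Galois orbits of the complex fundamental representations.
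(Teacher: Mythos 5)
Your proposal is correct and reaches the conclusion by a genuinely different route than the paper's own argument. The paper proves the statement directly, in a way that deliberately mirrors (in intrinsic, group-theoretic terms) the first half of the proof of Theorem~\ref{th:irreducible-case}: after specialising to a maximal $\phi_0\in\Psi$ exactly as you do, it establishes quasi-multiplicativity of $\Phi^{\phi_0}$ via Quint's ``produit g\'{e}n\'{e}rique'' property, applies Proposition~\ref{pr:qm-unique} to obtain a Gibbs inequality, then exploits the Bernoulli structure of $\mu$ together with the Gelfand-type formula $\tfrac{1}{n}\kappa(\gamma^n)\to\lambda(\gamma)$ to derive the additivity $\phi_0(\lambda(\gamma_1\gamma_2))=\phi_0(\lambda(\gamma_1))+\phi_0(\lambda(\gamma_2))$ on $\Gamma$, and finally invokes Theorem~\ref{thm.benoist.density} to conclude $\phi_0|_{\mathfrak{a}_S}=0$, contradicting $\phi_0\in\mathcal{C}$. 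You instead reduce to Theorem~\ref{th:irreducible-case} by constructing auxiliary $\R$-irreducible representations whose highest restricted weights are positive multiples of the fundamental weights appearing in $(\phi_0)_S$, plus characters encoding $(\phi_0)_Z$: since $\widetilde\Phi\asymp\Phi^{\phi_0}$ up to uniformly bounded multiplicative error, a Bernoulli equilibrium state for one is one for the other, and (i)$\Rightarrow$(iii) of Theorem~\ref{th:irreducible-case} then forces each $\pi_i(G)$ to be compact modulo determinant. This is a tidier route in that the thermodynamic work is exported to the black box, at the cost of assembling the requisite representations (precisely the content of \cite[Lemmas 8.15, 8.17]{bq.book}, which the paper's own Step~0 also cites to obtain submultiplicativity of $\Phi^{\phi}$).

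Two minor observations. First, your final appeal to Theorem~\ref{thm.benoist.density} is superfluous: the compactness conclusion (iii) of Theorem~\ref{th:irreducible-case} holds over all of $G$, not merely $\Gamma$, so $(m_i\omega_i-\eta_i)(\lambda(g))=0$ for every $g\in G$; since $\lambda$ maps $G$ onto all of $\mathfrak{a}^+$, this already forces $m_i\omega_i=\eta_i$ on $\mathfrak{a}$, which is impossible because $\omega_i\in\mathfrak{a}_Z^0\setminus\{0\}$ while $\eta_i\in\mathfrak{a}_S^0$ and $\mathfrak{a}_Z^0\cap\mathfrak{a}_S^0=\{0\}$. (The non-compact type hypothesis is still used: it ensures $\mathfrak{a}_S\ne 0$ and hence $\omega_i\ne 0$ for $i\le d_S$.) Second, $(\phi_0)_Z$ need not itself be the differential of an algebraic character of $G$, since its coordinates in the rational character lattice may be irrational; the precise statement is that $e^{(\phi_0)_Z(\kappa(g))}$ can be written as a finite product $\prod_j|\chi_j(g)|^{t_j}$ with each $\chi_j$ an algebraic character of $G$ and each $t_j>0$, and each such one-dimensional representation should then appear in $\widetilde\Phi$ with exponent $t_j$. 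This adjustment does not affect the argument.
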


The proof of Theorem \ref{thm.cartan.state} follows a closely analogous path to the first half of the proof of the implication  (i) $\implies$ (iii) in Theorem \ref{th:irreducible-case} (corresponding to \S \ref{subsub.family.of.subspaces}--\S \ref{sss:ben} in the main text). We therefore give the proof in outline only. The proof starts by showing that the potential $\Phi^\Psi$ is indeed submultiplicative, justifying the use of the terminology of subadditive thermodynamic formalism in the statement above.

\begin{proof}[Proof of Theorem \ref{thm.cartan.state}]
\underline{\textit{Step 0}} (Submultiplicativity of $\Phi^\Psi$): ${}$ It clearly suffices to show that given any $\phi \in \mathcal{C}$, the potential $\Phi^{\phi}$ defined by $\Phi^{\phi}(\iii)=e^{\phi(\kappa(g_\iii))}$ is submultiplicative. Since for every $\omega \in \mathfrak{a}_S^0$ and $g,h \in G$, we have $\omega(\kappa(gh))=\omega(\kappa(g))+\omega(\kappa(h))$,  $\Phi^{\phi_Z}$ is a multiplicative potential. Therefore, it suffices to show that $\Phi^{\phi_S}$ is submultiplicative. By \cite[Lemma 8.15]{bq.book}, for every $i=1,\ldots,d_S$, there exists a rational irreducible proximal representation $(\rho_i,V_i)$ such that the highest weight $\chi_i$ of $\rho_i$ is a (positive integer) multiple of the fundamental weight $\omega_i$. It follows by \cite[Lemma 8.17]{bq.book} that for each $i=1,\ldots,d_S$, we can choose an inner product norm $||.||_i$ on $V_i$ such  $\omega_i(\kappa(g))=\log ||\rho_i(g)||_i$, where we also denote by $||.||_i$ the associated operator norm. Since the coefficients of $\phi_S$ in the basis $\{\omega_i\}$ are non-negative, it follows by submultiplicativity of the operator norms $||.||_i$ that the potential defined by $\iii \mapsto e^{\phi_S(\kappa(g_\iii))}$ is submultiplicative as desired.\\[3pt]
\underline{\textit{Step 1}} (Specialising to a maximal linear form, cf.~\S \ref{subsub.transitivity.classes}): ${}$ Arguing by contradiction, we suppose that the submultiplicative potential $\Phi^{\Psi}$ has a Bernoulli equilibrium state $\mu$. It follows by the same argument in \S \ref{subsub.transitivity.classes} that there exists $\phi \in \Psi$ such that $\mu$ is an equilibrium state for the submultiplicative potential $\Phi^{\phi}$ defined by $\Phi^{\phi}(\iii)= e^{\phi(\kappa(g_\iii))}$.\\[3pt]
\underline{\textit{Step 2}} (Obtaining quasi-multiplicativity, cf.~\S \ref{sss:c3}): ${}$ Using  Quint's \cite[Proposition I.2]{quint.div}\footnote{Namely the first property of the ``Produit g\'{e}n\'{e}rique'' which does not use the discreteness assumption.}, which is based on the aforementioned representation theoretic ingredients \cite[Lemmas 8.15 \& 8.17]{bq.book} and the main result of Abels--Margulis--Soifer \cite{AMS}, we deduce that there exists a finite set $F$ in the semigroup $\Gamma$ generated by $\{g_1,\ldots,g_N\}$ and a constant $K>0$ such that for every $g,h \in \Gamma$, there exists $f \in F$ satisfying 
$$
||\kappa(gfh)-\kappa(g)-\kappa(h)|| \leq K.
$$
It immediately follows that the the potential $\Phi^{\phi}$ is quasi-multiplicative in the sense of \eqref{eq.def.quasimult} --- the aforementioned result of Quint is a predecessor of \cite[Theorem 6]{BoMo18} and indeed in the proof of Theorem \ref{th:main-tech}, this step is analogous to where we use \cite[Theorem 6]{BoMo18} to obtain quasi-multiplicativity.\\[3pt]
\underline{\textit{Step 3}} (Thermodynamic ingredients, cf.~\S \ref{sss:c3}): ${}$
It now follows from Proposition \ref{pr:qm-unique} that the Bernoulli measure $\mu$ is the unique equilibrium state of the potential $\Phi^{\phi}$ and satisfies the relevant Gibbs inequality.
\\[3pt]
\underline{\textit{Step 4}} (From the Gibbs inequality to the additivity of the Jordan projection, cf.~\S \ref{subsub.get.multiplicative}): ${}$ Denote by $\lambda:G \to \mathfrak{a}^+$ the Jordan projection in $G$ (see \S \ref{subsub.Cartan.Jordan}). We possess the following  ``Gelfand formula'' \cite[Remark 8.7]{bq.book} relating the Cartan $\kappa$ and Jordan $\lambda$ projections: for every $g \in G$, we have 
\begin{equation}\label{eq.cartan.gelfand}
\frac{1}{n} \kappa(g^n) \to \lambda(g).
\end{equation}
Using the same reasoning as in \S \ref{subsub.get.multiplicative}, replacing the usual Gelfand formula with \eqref{eq.cartan.gelfand}, we find that for every $g,h\in \Gamma$ we have
\begin{equation}\label{eq.lambda.additive.general}
\phi(\lambda(gh))=\phi(\lambda(g))+\phi(\lambda(h)).
\end{equation}
\underline{\textit{Step 5}} (Applying Benoist's non-arithmeticity, cf.~\S \ref{sss:ben}): ${}$
In view of \eqref{eq.lambda.additive.general}, the subspace $\ker \phi$ contains the set $\{\lambda(gh)-\lambda(g)-\lambda(h) \; | \; g,h \in \Gamma\}$. One therefore deduces from Benoist's Theorem \ref{thm.benoist.density} that $\ker \phi$ contains $\mathfrak{a}_S$. This contradicts the assumption that $\phi_S \neq 0$ (or equivalently that $\phi \notin \mathfrak{a}_S^0$) and finishes the proof.
\end{proof}

\begin{remark}[On the set of equilibrium states of $\Phi^{\Psi}$]
1. Since the potential $\Phi^{\Psi}$ is not quasi-multiplicative in general, it is not guaranteed that it possesses a unique equilibrium state. Indeed, under the assumptions of the previous theorem it was shown in \cite{MoSe19} that a potential of the form $\Phi^{\Psi}$ can have several distinct equilibrium states. On the other hand the number of ergodic equilibrium states of $\Phi^{\Psi}$ is always finite, as follows from the main result of \cite{BoMo18}.\\[2pt]
2. Steps 1 $\&$ Step 2 of the above proof imply that the number of ergodic equilibrium states of $\Phi^{\Psi}$ is bounded above by the cardinality of $\Psi$. In fact, these two steps can be seen as part of the proof of the main result of \cite{BoMo18}. Indeed, the latter is proved by additionally applying a reductivisation argument (cf.\ \cite[Proposition 6.2]{KaMo18}) and dealing with non-connectivity (as in \S \ref{sss:c3}).\\[2pt]
3. For a salient cone $C$ of non-empty interior in $\mathfrak{a}$, consider the partial order $\leq_C$ on $\mathfrak{a}^\ast$ defined as $\ell \leq_C \ell'$ if and only if $\ell(x) \leq \ell'(x)$ for every $x \in C$. Now if $G$ is moreover semisimple, denoting by $\Psi_{\max} \subseteq \Psi$ the set of maximal elements of $\Psi$ for the partial order $\leq_{\mathfrak{a}^+}$ on $\mathfrak{a}^\ast$, it is not hard to see that the number of ergodic equilibrium states of $\Phi^{\Psi}$ is in fact bounded above by the cardinality of $\Psi_{\max}$. A further refinement can be given by using the notion of Benoist limit cone $\mathcal{BC}(\Gamma) \subseteq \mathfrak{a}^+$ of the semigroup $\Gamma$ generated by the IFS (\cite{benoist.linear1}): the number of ergodic equilibrium states of $\Phi^{\Psi}$ is bounded above by the cardinality of the maximal set $\Psi_{\max}^{\Gamma} \subseteq \Psi_{\max} \subseteq \Psi$ for the refined partial order $\leq_{\mathcal{BC}(\Gamma)}$.
\end{remark}

\section*{Acknowledgements}
The research of I.D. Morris was partially supported by the Leverhulme Trust (Research Project Grant RPG-2016-194). During the realisation of this project, C.S. was supported by SNF grants 182089, 178958 and 193481. The authors are grateful to several anonymous referees for numerous helpful remarks and bibliographical suggestions, and to one referee in particular for suggesting the corollaries of this work which are described in the appendix. They also thank Emmanuel Breuillard for helpful conversations.

I.D. Morris wishes to thank Roger Tribe (who was his tutor at the University of Warwick from 1999 to 2001) for suggesting that he follow Jonathan Munn's lecture course on Lie Groups in the 2000-01 academic year. This was excellent advice which he still regrets not having followed at the time. 



\end{document}